\crefname{section}{Section}{Sections}
\crefname{subsection}{\S}{\S\S}
\theoremstyle{plain}
\newtheorem{lemma}{Lemma}[section]
\newtheorem{proposition}[lemma]{Proposition}
\newtheorem{corollary}[lemma]{Corollary}
\newtheorem{theorem}[lemma]{Theorem}
\newtheorem{conjecture}[lemma]{Conjecture}
\theoremstyle{nonumberplain}
\theoremstyle{plain}
\newtheorem{definition}[lemma]{Definition}
\newtheorem{example}[lemma]{Example}
\newtheorem{remark}[lemma]{Remark}
\crefname{definition}{definition}{definitions}
\crefname{ex}{example}{examples}
\crefname{remark}{remark}{remarks}
\crefname{convention}{convention}{conventions}
\crefname{claim}{claim}{claims}
\crefname{conjecture}{conjecture}{conjectures}
\crefname{lemma}{lemma}{lemmas}
\crefname{proposition}{proposition}{propositions}
\crefname{question}{question}{questions}
\crefname{corollary}{corollary}{corollaries}
\crefname{theorem}{theorem}{theorems}
\crefname{assumption}{assumption}{Assumptions}
\crefname{equation}{}{}
\theoremstyle{nonumberplain}
\newtheorem{proof}{Proof}
\newcommand\bC{{\mathbb C}}
\newcommand\bL{{\mathbb L}}
\newcommand\bN{{\mathbb N}}
\newcommand\bS{{\mathbb S}}
\newcommand\bZ{{\mathbb Z}}
\newcommand\cC{{\mathcal C}}
\newcommand\cK{{\mathcal K}}
\newcommand\cT{{\mathcal T}}
\def\polhk#1{\setbox0=\hbox{#1}{\ooalign{\hidewidth
    \lower1.5ex\hbox{`}\hidewidth\crcr\unhbox0}}}
\newcommand{\trivdim}[2]{\mathrm{dim}^{#2}_{\mathrm{LT}}(#1)}
\newcommand{\wtrivdim}[2]{\mathrm{dim}^{#2}_{\mathrm{WLT}}(#1)}
\newcommand{\strivdim}[2]{\mathrm{dim}^{#2}_{\mathrm{SLT}}(#1)}
\newcommand{\join}[3]{#1 \circledast^{#2} #3}
\newcommand{\bes}{\begin{equation*}}
\newcommand{\ees}{\end{equation*}}
\newcommand{\be}{\begin{equation}}
\newcommand{\ee}{\end{equation}}
\newcommand{\stars}[1]{\operatorname{Star}_{#1}}
\begin{document}

\title{Equivariant Dimensions of Graph C*-algebras}
\author{Alexandru Chirvasitu,
  Benjamin Passer,
  Mariusz Tobolski
}

\date{}

\newcommand{\Addresses}{{% additional braces for segregating \footnotesize
  \bigskip
  \footnotesize

  \textsc{Department of Mathematics, University at Buffalo, Buffalo,
    NY 14260-2900, USA}\par\nopagebreak \textit{E-mail address}:
  \texttt{achirvas@buffalo.edu}

  \medskip
  
   \textsc{Department of Mathematics, United States Naval Academy, Annapolis, MD 21402-5002, USA}\par\nopagebreak \textit{E-mail address}: \texttt{passer@usna.edu}

  \medskip
  \textsc{Instytut Matematyczny, Uniwersytet Wroc\l{}awski, pl. Grunwaldzki 2/4, Wroc\l{}aw, 50-384 Poland}\par\nopagebreak \textit{E-mail address}:
  \texttt{mariusz.tobolski@math.uni.wroc.pl}
}}

\maketitle

\begin{abstract}
  We explore the recently introduced local-triviality dimensions by studying gauge actions on graph $C^*$-algebras, as well as the restrictions of the gauge action to finite cyclic subgroups.  For $C^*$-algebras of finite acyclic graphs and finite cycles, we characterize the finiteness of these dimensions, and we further study the gauge actions on many examples of graph $C^*$-algebras. These include the Toeplitz algebra, Cuntz algebras, and $q$-deformed spheres.
\end{abstract}

\noindent {\em Key words:} local-triviality dimension, graph $C^*$-algebra, Cuntz algebra, Toeplitz algebra

\vspace{.5cm}

\noindent{MSC 2010: 46L55; 46L85; 19L47; 55N91}

%%%%%%%%%%%%%%%%%%%%%%%%%%%%%%%%%%%%%%%%%%%%%%%%%%%%%%%%%%%%%%%%%%%%%%%%%%%%%%%%%%%%%%
%%%%%%%%%%%%%%%%%%%%%%%%%%%%%%%%%%%%%%%%%%%%%%%%%%%%%%%%%%%%%%%%%%%%%%%%%%%%%%%%%%%%%%

\section{Introduction}

Principal bundles are fundamental objects in algebraic topology, and their local triviality is pivotal for applications in physics.  One can measure the local triviality of a given principal bundle using the Schwarz genus~\cite[Definition 5]{schwarz2}, which is the smallest number of open sets in a trivializing cover. The local-triviality dimension of \cite[Definition 3.1]{hajacindex} is then a noncommutative generalization of this invariant, which was inspired by the Rokhlin dimension~\cite[Definition~1.1]{dimrok} used in and around the classification program of unital, simple, separable, nuclear $C^*$-algebras. In this manuscript, $G$~always denotes a nontrivial compact group, though we note that many of these concepts apply to compact quantum groups as well.

\begin{definition}\label{def:trivdim}\cite[Definition 3.1]{hajacindex}
Let $G$ act on a unital $C^*$-algebra $A$, and let $t$ denote the identity function in $C_0((0, 1])$. The {\it local-triviality dimension} $\trivdim{A}{G}$ is the smallest $n$ for which it is possible to define $G$-equivariant $*$-homomorphisms $\rho_i:C_0((0,1])\otimes C(G)\to A$ for all $0 \leq i \leq n$ in such a way that $\sum_{i=0}^n \rho_i(t\otimes 1)=1$. If there is no finite $n$, the local-triviality dimension is $\infty$.
\end{definition}

Above, $G$ acts on $C(G)$ by translation: $(\alpha_g f) (h) = f(hg)$. The local-triviality dimension is a measure of an action's complexity, and finiteness of the local-triviality dimension implies freeness of the action in the sense of Ellwood~\cite[Definition 2.4]{ell}. For actions of compact abelian groups, Ellwood's freeness condition is equivalent to freeness (or saturation) in the sense of Rieffel, as in \cite[Definition~1.6]{rieffelproper} or \cite[Definition~7.1.4]{phillipsfreeness}.

The local-triviality dimension is useful for proving noncommutative generalizations of the Borsuk-Ulam theorem (see \cite[\S 6]{hajacindex}), namely, results which claim that $G$-equivariant unital $*$-homomorphisms between certain unital $C^*$-algebras do not exist. In particular, if $G$ acts on $A$ and $B$, and there is a $G$-equivariant unital $*$-homomorphism from $A$ to $B$, then $\trivdim{A}{G} \geq \trivdim{B}{G}$. These pursuits revolve around the Type 1 noncommutative Borsuk-Ulam conjecture of \cite[Conjecture 2.3]{BDH}, which is stated for coactions of compact quantum groups. While there is significant interest in the quantum case~\cite{alexbeninvariants,BDHrevisited,hajacindex}, here we will only consider actions of compact groups, so that the appropriate subcase of the conjecture is equivalent to the following.

\begin{conjecture}
Let $G$ be a nontrivial compact group which acts freely on a unital $C^*$-algebra $A$. Equip the join 
\begin{equation*}
\join{A}{}{C(G)} = \{f \in C([0, 1], A \otimes C(G)): f(0) \in \bC \otimes C(G), f(1) \in A \otimes \bC \}
\end{equation*}
with the diagonal action of $G$. Then there is no equivariant, unital $*$-homomorphism $\phi: A \to \join{A}{}{C(G)}$. 
\end{conjecture}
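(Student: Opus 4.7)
The plan is to use the local-triviality dimension as the obstructing invariant. Since an equivariant unital $*$-homomorphism $A \to B$ implies $\trivdim{A}{G} \geq \trivdim{B}{G}$ (as recalled just before the conjecture), the existence of $\phi$ would force $\trivdim{A}{G} \geq \trivdim{\join{A}{}{C(G)}}{G}$. To derive a contradiction, I would establish the strict reverse inequality $\trivdim{\join{A}{}{C(G)}}{G} > \trivdim{A}{G}$ for every free action, via a two-sided argument.

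The easier half is the upper bound $\trivdim{\join{A}{}{C(G)}}{G} \leq \trivdim{A}{G}+1$. Given equivariant $*$-homomorphisms $\rho_0,\dots,\rho_n \colon C_0((0,1]) \otimes C(G) \to A$ trivializing $A$, one builds trivializations of the join by concentrating liftings of the $\rho_i$ on a collar $s \in [\delta,1]$ (where the join looks essentially like $A \otimes C(G)$), and then adjoining one extra equivariant trivialization supported on $[0,\delta]$ that exploits the natural $C(G)$-factor present at the $s=0$ end of the join. The technicality is interpolating cutoffs carefully enough that the resulting maps are genuine $*$-homomorphisms whose sum of $\widetilde{\rho}_i(t \otimes 1)$ is exactly $1_{\join{A}{}{C(G)}}$; the net $+1$ increase in dimension is what one expects.

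The serious content is the matching strict lower bound $\trivdim{\join{A}{}{C(G)}}{G} \geq \trivdim{A}{G}+1$, and this is where essentially all the difficulty resides --- it is precisely what makes the statement a conjecture rather than a theorem. In the commutative case $A = C(X)$, the join realizes $C(X * G)$ and the bound becomes the classical fact that the Schwarz genus of a principal $G$-bundle strictly increases under joining with $G$, provable using characteristic classes of the Borel construction (Stiefel--Whitney classes for $G=\bZ/2$, or $H^*(BG)$-valued obstructions more generally). For noncommutative $A$ I would look for an equivariant K-theoretic index pairing or Chern class that simultaneously detects $\trivdim{A}{G}$ and is additive under the join; at present no universal such invariant is known. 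A further obstacle is that when $\trivdim{A}{G}$ is infinite the monotonicity argument is vacuous altogether, so a proof in full generality cannot rest on the dimension alone; I would first aim at the case of finite $\trivdim{A}{G}$ and then attempt to extend via an approximation or reduction argument, for instance by truncating through equivariant quotients of $A$ for which the dimension becomes finite.
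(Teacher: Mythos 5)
There is a fundamental mismatch here: the statement you are trying to prove is stated in the paper as a \emph{conjecture} (the classical-group case of the Type~1 noncommutative Borsuk--Ulam conjecture of Baum--D\polhk{a}browski--Hajac), and the paper offers no proof of it. Your proposal, to its credit, recognizes this --- but that means it is not a proof at all. The entire content is deferred to the strict lower bound $\trivdim{\join{A}{}{C(G)}}{G} \geq \trivdim{A}{G}+1$, which is precisely the open problem; asserting that one ``would look for an equivariant $K$-theoretic index pairing or Chern class'' that does the job is a research program, not an argument. The upper bound $\trivdim{\join{A}{}{C(G)}}{G} \leq \trivdim{A}{G}+1$ is plausible and standard in spirit, but it plays no logical role in deriving the contradiction; only the lower bound matters, and it is missing.

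There is also a structural obstruction you acknowledge but do not resolve: the local-triviality dimension cannot be the obstructing invariant in full generality, because there exist free actions with $\trivdim{A}{G}=\infty$. The paper itself exhibits such examples --- the remark following \Cref{prop.mat} notes that the gauge $\bZ/k$-action on $M_n$ is free whenever $k \leq n$, yet has infinite (plain and strong) local-triviality dimension whenever $k \nmid n$, and \Cref{pr.ctz-inf} shows the free gauge $\bS^1$-action on $\mathcal{O}_n$ has infinite \emph{weak} local-triviality dimension. For any such $A$ the inequality $\trivdim{A}{G} \geq \trivdim{\join{A}{}{C(G)}}{G}$ is vacuously satisfiable and no contradiction can be extracted from dimension monotonicity alone. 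Your suggested fallback of ``truncating through equivariant quotients'' for which the dimension becomes finite is not fleshed out and faces the difficulty that passing to a quotient reverses the direction of the dimension inequality. In short: the approach is a reasonable heuristic for why the conjecture should be true, but there is no proof here, and the paper does not contain one either.
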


On the other hand, the local-triviality dimension (along with its variants discussed in \Cref{def:all_trivdim}) also generalizes an earlier approach to noncommutative Borsuk-Ulam theory. In \cite[Main Theorem]{taghavi}, Taghavi considers actions of finite abelian groups on unital Banach algebras and places restrictions on the structure of individual elements of spectral subspaces. This culminates in questions such as \cite[Question 3]{taghavi} for noncommutative spheres. From this point of view, one seeks to bound how many elements from a spectral subspace are needed to produce an invertible sum-square. In \cref{se.first_actual_section}, we recast the local-triviality dimensions in terms of such computations.

Our results focus on graph $C^*$-algebras \cite{bprs, raeburn05}. Many algebraic properties of a graph $C^*$-algebra, such as simplicity, or classification of certain ideals, can be described purely in terms of the underlying directed graph. Further, every graph $C^*$-algebra is equipped with a useful action of the circle, called the gauge action.  Freeness of this action is again determined by simple conditions on the graph, as in \cite[Proposition 2]{szym03} or \cite[Corollary 4.4]{chr}, so our concern here is to bound the local-triviality dimensions of the gauge action.

The gauge action and its restrictions to finite cyclic subgroups give many examples of locally trivial noncommutative principal bundles. By studying these bundles, we point out phenomena not found in the commutative case, such as \Cref{ex.strict}. Namely, the tensor product of non-free $\bZ/3$-actions may be free. Graph $C^*$-algebras also give a natural framework for element-based noncommutative Borsuk-Ulam theory, as the Vaksman--Soibelman quantum spheres $C(S^{2n-1}_q)$ of \cite{vs90} admit a graph presentation from \cite[Proposition 5.1]{hs-sph}. The gauge action provides an answer to Taghavi's \cite[Question 3]{taghavi}, which we examine in \Cref{pr.weakdimquantumbad}. These claims follow from a general study of the local-triviality dimension of the gauge action, which we similarly apply to other familiar graph $C^*$-algebras.

The paper is organized as follows. In \cref{se.last_blabla_section}, we recall basic facts on graph $C^*$-algebras and the local-triviality dimensions. Next, in \cref{se.first_actual_section}, we recast the local-triviality dimensions for actions of $\bZ/k$ or $\bS^1$ in terms of elements in the spectral subspaces. From this, we generate some bounds on the local-triviality dimensions of the gauge action restricted to $\bZ/2$, phrased in terms of the adjacency matrix, which we follow with a brief discussion of tensor products and $\bZ/k$ actions.  In \Cref{se.fin-acyclic-wdim}, we show that both the local-triviality dimension and its stronger version can only take the value $0$ or $\infty$ for the gauge $\mathbb{Z}/k$-action on a $C^*$-algebra of a finite acyclic graph.  For the same actions, however, finiteness of the weak local-triviality dimension is equivalent to freeness. \Cref{se.ex} contains various examples for which we can obtain additional local-triviality dimension estimates, such as the Cuntz algebras, the Toeplitz algebra, the graph of an $n$-cycle, and quantum spheres realized as graph $C^*$-algebras.

%%%%%%%%%%%%%%%%%%%%%%%%%%%%%%%%%%%%%%%%%%%%%%%%%%%%%%%%%%%%%%%%%%%%%%%%%%%%%
%%%%%%%%%%%%%%%%%%%%%%%%%%%%%%%%%%%%%%%%%%%%%%%%%%%%%%%%%%%%%%%%%%%%%%%%%%%%%
\section{Graph algebras and the local-triviality dimensions}\label{se.last_blabla_section}

We will consider directed graphs $E = (E^0, E^1, r, s)$, where $E^0$ is a countable set of vertices, $E^1$ is a countable set of edges, and $r, s: E^1 \to E^0$ are the range and source maps, respectively. The adjacency matrix $A_E$, defined by 
\begin{equation*}
(A_E)_{vw} = \# \text{ edges with source } v \text{ and range } w,
\end{equation*}
has entries in $\mathbb{Z}^+ \cup \{\infty\}$. In particular, loops and distinct edges with the same source and range are allowed.

\begin{definition}\label{def.graphalgebra} The \textit{graph} $C^*$\textit{-algebra} $C^*(E)$ is the universal $C^*$-algebra generated by elements $P_v$ for all $v \in E^0$ and elements $S_e$ for all $e \in E^1$, subject to the following constraints:

\begin{enumerate}
\item\label{cond:gph_vertex} for each $v \in E^0$, $P_v$ is a projection, and these projections are mutually orthogonal,
\item\label{cond:gph_edge} for each $e \in E^1$, $S_e$ is a partial isometry, and the ranges of these partial isometries are mutually orthogonal,
\item\label{cond:gph_range} for each $e \in E^1$, $S_e^* S_e = P_{r(e)}$,
\item\label{cond:gph_ineq} for each $e \in E^1$, $S_e S_e^* \leq P_{s(e)}$, and
\item\label{cond:gph_sum} for each $v \in E^0$ with $s^{-1}(v)$ nonempty and finite, $P_v = \sum\limits_{e \in s^{-1}(v)} S_eS_e^*$.
\end{enumerate}
\end{definition}

We will primarily consider finite graphs, where $E^0$ and $E^1$ are both finite sets, so item \ref{cond:gph_sum} will apply provided $v$ is not a sink (and hence condition \ref{cond:gph_ineq} is automatic in this case). Unsurprisingly, in many of our computations, sinks must be handled separately. We will consider paths (i.e., directed paths) from left to right: a path of length $n \geq 1$ is a composition $e_1 \cdots e_n$ of edges with $r(e_j) = s(e_{j+1})$ for each $1 \leq j \leq n - 1$, and a path of length $0$ is just a vertex $v$. We denote the length of a path $\mu$ by $|\mu|$. Some particular consequences of the above relations include
\begin{itemize}
\item distinct edges have $S_e^* S_f = 0$, regardless of whether or not the ranges or sources agree, and
\item edges with distinct ranges have $S_e S_f^* = 0$.
\end{itemize}
Further, $C^*(E)$ always admits the gauge action of the circle, defined as follows.

\begin{definition}
Let $E$ be a graph. Then the \textit{gauge action} is the action of $\mathbb{S}^1$ on $C^*(E)$ generated by the automorphisms $S_e \mapsto \lambda \, S_e$ and $P_v \mapsto P_v$ for $\lambda \in \mathbb{S}^1$.
\end{definition}

We will estimate the local-triviality dimension of the gauge action, and of the restriction of the gauge action to cyclic subgroups $\bZ/k$ of $\bS^1$. Similarly, we will consider two other variants of the local-triviality dimension. In what follows, $E_n G$ is defined by $E_0 G = G$ and $E_{n+1} G = E_n G * G$, where $*$ is the topological join operation. Also, $C(E_n G)$ is equipped with the diagonal action of $G$.

\begin{definition}\label{def:all_trivdim}\cite[Definitions 3.1 and 3.20]{hajacindex}
Let $G$ act on a unital $C^*$-algebra $A$, and let $t$ denote the identity function in $C_0((0, 1])$. 
\begin{itemize}
\item The {\it (plain) local-triviality dimension} $\trivdim{A}{G}$ is the smallest $n$ for which there exist $G$-equivariant $*$-homomorphisms $\rho_0, \ldots, \rho_n:C_0((0,1])\otimes C(G)\to A$ such that $\sum_{i=0}^n \rho_i(t\otimes 1) = 1$.
\item The {\it weak local-triviality dimension} $\wtrivdim{A}{G}$ is the smallest $n$ for which there exist $G$-equivariant $*$-homomorphisms $\rho_0, \ldots, \rho_n:C_0((0,1])\otimes C(G)\to A$ such that $\sum_{i=0}^n \rho_i(t\otimes 1)$ is invertible.
\item  The {\it strong local-triviality dimension} $\strivdim{A}{G}$ is the smallest $n$ for which there exists a $G$-equivariant, unital $*$-homomorphism from $C(E_n G)$ to $A$.
\end{itemize}
In any of the above cases, if there is no finite $n$, then the associated dimension is $\infty$.
\end{definition} 

Note that even if $\sum_{i=0}^n \rho_i(t \otimes 1)$ is invertible, it need not commute with any of its summands, so it may not be possible to normalize the sum. However, an action with $\wtrivdim{A}{G} < \infty$ is still free \cite[Theorem 3.8]{hajacindex}. Further, as in \cite[Question 3.22]{alexbeninvariants} and the preceding comments, noncommutative Borsuk-Ulam theorems have effectively only used the weak local-triviality dimension. From \cite[Proposition 3.4]{alexbeninvariants}, the plain local-triviality dimension satisfies $\trivdim{C(E_n G)}{G} \leq n$, which implies that the strong local-triviality dimension dominates the plain one. In general,

\begin{equation}\label{eq:thecomparison}
\wtrivdim{A}{G} \leq \trivdim{A}{G} \leq \strivdim{A}{G}.
\end{equation}
Since we have restricted attention to classical compact groups, we also have the following characterization.

\begin{proposition}\label{pr.sdimclassical}
Let $G$ act on a unital $C^*$-algebra $A$, and as above let $t$ denote the identity function in $C_0((0, 1])$. Then $\strivdim{A}{G}$ is the smallest $n$ for which it is possible to define $G$-equivariant $*$-homomorphisms $\rho_i: C_0((0,1]) \otimes C(G) \to A$ for all $0 \leq i \leq n$ in such a way that $\bigcup_{i=0}^n \operatorname{Ran}(\rho_i )$ generates a commutative $C^*$-algebra and $\sum_{i=0}^n \rho_i(t \otimes 1) = 1$. Equivalently, $\strivdim{A}{G}$ is the infimum of $\trivdim{C(X)}{G}$ where $C(X) \subseteq A$ is a $G$-invariant, commutative, unital $C^*$-subalgebra.
\end{proposition}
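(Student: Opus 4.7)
The plan is to show that the $\rho_i$-based characterization in the first sentence of the proposition agrees with the definition of $\strivdim{A}{G}$ via $G$-equivariant unital $*$-homomorphisms out of $C(E_nG)$. Once this is done, the ``equivalently'' clause will be nearly tautological: on the one hand, the unital commutative $C^*$-subalgebra $C(X) \subseteq A$ generated by $\bigcup_i \operatorname{Ran}(\rho_i)$ is automatically $G$-invariant (because each $\rho_i$ is $G$-equivariant), and the factored $\rho_i$'s witness $\trivdim{C(X)}{G} \leq n$; on the other hand, any $G$-invariant commutative unital subalgebra $C(X) \subseteq A$ with $\trivdim{C(X)}{G} \leq n$ supplies such $\rho_i$'s after composition with the inclusion into $A$.

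For the first direction, I would take a $G$-equivariant unital $*$-homomorphism $\phi: C(E_nG) \to A$ and combine it with the cited bound $\trivdim{C(E_nG)}{G} \leq n$ from \cite[Proposition 3.4]{alexbeninvariants}. This yields $G$-equivariant $\sigma_i: C_0((0,1]) \otimes C(G) \to C(E_nG)$ with $\sum_i \sigma_i(t \otimes 1) = 1$, and the composites $\rho_i := \phi \circ \sigma_i$ then meet all the requirements, with ranges lying in the commutative subalgebra $\phi(C(E_nG)) \subseteq A$.

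For the reverse direction, I would start with such $\rho_i$'s, pass to the unital $C^*$-subalgebra $B \subseteq A$ they generate (which is commutative by hypothesis and $G$-invariant by equivariance), and write $B \cong C(X)$ for a compact Hausdorff $G$-space $X$. Gelfand duality then turns each $\rho_i$ into a continuous $G$-equivariant map $\phi_i: X \to CG$ into the unreduced cone on $G$ (the one-point compactification of $(0,1] \times G$ at the apex $\{0\} \times G$), with $\rho_i(t \otimes 1)$ recording the height coordinate. The identity $\sum_i \rho_i(t \otimes 1) = 1$ says precisely that these heights sum to $1$ pointwise on $X$, so the tuple $(\phi_0, \dots, \phi_n)$ lands in the image of $E_nG = G * \cdots * G$ under its standard embedding into $(CG)^{n+1}$ cut out by the sum-of-heights condition. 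Dualizing produces the sought $G$-equivariant unital $*$-homomorphism $C(E_nG) \to C(X) \hookrightarrow A$, giving $\strivdim{A}{G} \leq n$.

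The main technicality to handle carefully is the identification of $E_nG$ with its image in $(CG)^{n+1}$, and correspondingly of $C(E_nG)$ as universal among commutative unital $C^*$-algebras equipped with $n+1$ equivariant ``cone-on-$G$'' maps whose height functions sum to $1$. This is the Gelfand dual of the usual presentation of the iterated join as a quotient of $\Delta^n \times G^{n+1}$, and it is what makes the two directions above inverse to one another on the nose.
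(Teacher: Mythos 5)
Your proposal is correct and follows essentially the same route as the paper: both directions hinge on the identification of $E_nG$ with the subset of $(\mathcal{C}G)^{n+1}$ cut out by the condition that the cone heights sum to $1$, and on the correspondence between equivariant maps $C_0((0,1])\otimes C(G)\to A$ with commuting ranges and equivariant cone-valued maps on the spectrum of the subalgebra they generate. The only cosmetic difference is that you run the converse direction spatially via Gelfand duality (maps $X\to \mathcal{C}G$ assembling into $X\to E_nG$), while the paper runs the dual algebraic version (unitizing the $\rho_i$ to maps $C(\mathcal{C}G)\to A$, tensoring, and factoring through $C(E_nG)$); likewise your appeal to $\trivdim{C(E_nG)}{G}\le n$ in the forward direction is exactly what the paper's explicit composites $\pi\circ\iota_i$ establish.
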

\begin{proof}
The space $E_nG$ can be described as
\begin{equation}\label{coneprod}
E_nG=\left\{([(s_0,g_0)],\ldots,[(s_n,g_n)])\in\left(\mathcal{C}G\right)^{n+1}~:~\sum_{i=0}^ns_i=1\right\}\subset \left(\mathcal{C}G\right)^{n+1},
\end{equation}
where $\cC G$ denotes the unreduced cone of $G$, so Gelfand--Naimark duality gives a surjective \mbox{$G$-equi}\-vari\-ant unital $*$-homomorphism $\pi:C(\mathcal{C}G)^{\otimes (n+1)}\to C(E_nG)$. 

If $\strivdim{A}{G}= n$, then there is a unital, $G$-equivariant $*$-homomorphism
\[
\rho:C(E_nG)\longrightarrow A,
\]
so for each $0 \leq i \leq n$, we may define a $G$-equivariant $*$-homomorphism
\[
\rho_i:C_0((0,1])\otimes C(G)\overset{\iota_i}{\to} C(\mathcal{C}G)^{\otimes (n+1)}
\overset{\pi}{\to} C(E_nG)\overset{\rho}{\rightarrow} A.
\]
Here $\iota_i$ denotes the inclusion of $C_0((0,1])\otimes C(G)$ into the $i$th tensor factor of $C(\mathcal{C}G)^{\otimes (n+1)}$.
The images of the $\rho_i$ generate a commutative $C^*$-algebra, and $\sum_{i=0}^n \rho_i(t \otimes 1) = 1$.

Conversely, assume there exist $G$-equivariant $*$-homomorphisms $\rho_i: C_0((0,1]) \otimes C(G) \to A$ for all $0 \leq i \leq n$, such that $\bigcup_{i=0}^n \operatorname{Ran}(\rho_i )$ generates a commutative $C^*$-algebra and such that the condition $\sum_{i=0}^n \rho_i(t \otimes 1) = 1$ also holds. Since the unitization $(C_0((0,1])\otimes C(G))^+$ is $G$-equivariantly isomorphic to $C(\cC G)$ (where the action is trivial on the scalar part), we can define the $G$-equivariant $*$-homomorphisms $\rho^+_i:C(\mathcal{C}G)\to A$ for all $0\leq i\leq n$. Let $\rho^+:C(\mathcal{C}G)^{\otimes (n+1)}\to A$ denote the tensor product of the maps $\rho_i^+$, which is well-defined due to the condition on the range of each map $\rho_i$. Observe that $\rho^+$ is $G$-equivariant with respect to the diagonal action on the tensor product. Finally, by \eqref{coneprod} and the fact that $\sum_{i=0}^n \rho_i(t \otimes 1) = 1$, $\rho^+$ factors through $C(E_nG)$, providing a unital $G$-equivariant $*$-homomorphism $\rho:C(E_nG)\to A$. Note that we also showed that for commutative $G$-$C^*$-algebras, the plain and strong local-triviality dimensions coincide. 

Next, consider $\inf\trivdim{C(X)}{G}$, where the infimum is taken over all $G$-invariant, commutative, unital 
$C^*$-subalgebras of $A$. 
Since all the inclusions $C(X) \hookrightarrow A$ are $G$-equivariant,
\[
\strivdim{A}{G}\leq \strivdim{C(X)}{G}=\trivdim{C(X)}{G},
\]
and hence $\strivdim{A}{G}\leq \inf\trivdim{C(X)}{G}$. For the reverse inequality, we may assume that
\begin{equation*}
  \strivdim{A}{G} = n
\end{equation*}
is finite, in which case we denote by $C(Y)$ 
the commutative $C^*$-algebra generated by $\bigcup_{i=0}^n \operatorname{Ran}(\rho_i )$.
The subalgebra $C(Y)$ is $G$-invariant by
the equivariance of the maps $\rho_i$.
Finally,
\[
\inf\trivdim{C(X)}{G}\leq \trivdim{C(Y)}{G}=\strivdim{C(Y)}{G} \leq n = \strivdim{A}{G}.
\]
\end{proof}

\begin{remark}\label{re.w0}

When $A$ is noncommutative, the various local-triviality dimensions may take different values. However, to say $A$ has local-triviality dimension zero is unambiguous, as $E_0 G = G$ and $C(G)$ is commutative.

\begin{equation}\label{eq.donotuseverymuch}
\wtrivdim{A}{G} = 0 \hspace{.1 in} \iff \hspace{.1 in} \trivdim{A}{G} = 0 \hspace{.1 in} \iff \hspace{.1 in} \strivdim{A}{G} = 0.
\end{equation}
More generally, \cite[Theorem 3.3]{WZcpoz} may be used to show that (\ref{eq.donotuseverymuch}) holds for compact quantum groups.
\end{remark}

%%%%%%%%%%%%%%%%%%%%%%%%%%%%%%%%%%%%%%%%%%%%%%%%%%%%%%%%%%%%%%%%%%%%%%%%%%%%%%%%%%%%%%
%%%%%%%%%%%%%%%%%%%%%%%%%%%%%%%%%%%%%%%%%%%%%%%%%%%%%%%%%%%%%%%%%%%%%%%%%%%%%%%%%%%%%%
\section{The local-triviality dimensions and spectral subspaces}\label{se.first_actual_section}

An action  $\alpha$ of a compact abelian group $G$ on a unital $C^*$-algebra $A$ induces a grading of $A$ by spectral subspaces $A_\lambda$, which are given by
\begin{equation*}
A_\lambda = \{ a \in A: \text{for all } g \in G, \alpha_g(a) = \lambda(g) a\}
\end{equation*}
for characters $\lambda \in \widehat{G}$. The action $\alpha$ is free if and only if $1 \in A_\lambda A_\lambda^*$ for each $\lambda$~\cite[Theorem~7.1.15]{phillipsfreeness}. A special case of this is the translation action of $G$ on $C(G)$, $(\alpha_g f) (h) = f(hg)$. If $C(G)$ is identified with $C^*(\widehat{G})$ in the natural way, then any character $\lambda \in \widehat{G}$ belongs to its own spectral subspace $C(G)_\lambda$. 

The local-triviality dimensions we need may similarly be recast in terms of saturation properties. For $\bZ = \widehat{\bS^1}$ and $\bZ/k = \widehat{\bZ/k}$, we temporarily use multiplicative notation and denote any fixed generator by $\gamma$ in order to appropriately match the identity element with the $C^*$-algebra unit.

\begin{proposition}\label{pr.normaldegcounting}
Let $\mathbb{S}^1$ act on a unital $C^*$-algebra $A$, and let $\gamma$ denote a generator of $\mathbb{Z} = \widehat{\bS^1}$. Then 
\begin{equation*}
\trivdim{A}{\bS^1} = \min\left\{n \in \mathbb{N}: \exists \, \operatorname{ normal } \, a_0, \ldots, a_n \in A_\gamma \emph{ such that } \sum\limits_{i=0}^n a_i a_i^* = 1\right\},
\end{equation*} 
\begin{equation*}
\wtrivdim{A}{\bS^1} = \min\left\{n \in \mathbb{N}: \exists \, \operatorname{ normal } \,a_0, \ldots, a_n \in A_\gamma \emph{ such that } \sum\limits_{i=0}^n a_i a_i^* \, \operatorname{ is } \, \operatorname{ invertible}\right\},
\end{equation*} 
and
\begin{equation*}
\strivdim{A}{\bS^1} = \min\left\{n \in \mathbb{N}: \exists \, \operatorname{ normal} \, \operatorname{commuting} \, a_0, \ldots, a_n \in A_\gamma \emph{ such that } \sum\limits_{i=0}^n a_i a_i^* = 1\right\}.
\end{equation*} 
\end{proposition}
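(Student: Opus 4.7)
The backbone of the proof is to recognize $C_0((0,1]) \otimes C(\bS^1)$ as the universal $C^*$-algebra generated by a normal contraction. Indeed, the homeomorphism $(0,1] \times \bS^1 \to \ol D \setminus \{0\}$, $(s,\lambda) \mapsto s\lambda$, where $\ol D$ is the closed unit disk, identifies $C_0((0,1]) \otimes C(\bS^1)$ with $C_0(\ol D \setminus \{0\})$; under this identification $t \otimes \gamma$ becomes the coordinate function $w \mapsto w$ and $t \otimes 1$ becomes $w \mapsto |w|$. The gauge action of $\bS^1$ on $C(\bS^1)$ corresponds to the standard rotation of $\ol D$, which places the coordinate function in the $\gamma$ spectral subspace. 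Universality therefore yields a bijection between $\bS^1$-equivariant $*$-homomorphisms $\rho: C_0((0,1]) \otimes C(\bS^1) \to A$ and normal contractions $c \in A_\gamma$, via $c = \rho(t \otimes \gamma)$, and under this bijection $\rho(t \otimes 1) = |c|$.

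For the plain dimension, the equation $\sum_i \rho_i(t \otimes 1) = 1$ thus translates to $\sum_i |c_i| = 1$ with each $c_i \in A_\gamma$ normal. To convert between this and the form $\sum a_ia_i^* = 1$ from the statement, I would apply functional calculus to each $c_i$ (respectively $a_i$) through one of the two continuous functions $f(z) := z|z|^{-1/2}$ (with $f(0) := 0$) and $h(z) := z|z|$. Both satisfy the equivariance identity $\varphi(\lambda z) = \lambda\,\varphi(z)$ for $\lambda \in \bS^1$, which is precisely what guarantees that $\varphi(c)$ stays in $A_\gamma$ whenever $c$ does. Given $\sum |c_i| = 1$, the element $a_i := f(c_i)$ is normal in $A_\gamma$ with $a_ia_i^* = |c_i|$, so $\sum a_ia_i^* = 1$. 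Conversely, given normal $a_i \in A_\gamma$ with $\sum a_ia_i^* = 1$, the element $b_i := h(a_i) = a_i|a_i|$ is a normal contraction in $A_\gamma$ with $|b_i| = a_ia_i^*$, and the $\rho_i$ that the bijection associates to $b_i$ satisfies $\sum_i \rho_i(t \otimes 1) = 1$.

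The weak case uses the same bijection without any functional-calculus substitution, taking $a_i = c_i$ directly. The remaining task is to show that $\sum |c_i|$ is invertible if and only if $\sum c_ic_i^* = \sum |c_i|^2$ is: for $\|c_i\| \leq 1$, the inequality $|c_i|^2 \leq |c_i|$ handles one direction, while a state-wise Cauchy--Schwarz bound of the form $\phi\!\left(\sum |c_i|\right)^{\!2} \leq (n+1)\,\phi\!\left(\sum |c_i|^2\right)$ handles the other. The strong case then combines the plain-case bijection with \Cref{pr.sdimclassical}: the ranges of the $\rho_i$ jointly generate a commutative $C^*$-algebra exactly when the normal elements $c_i = \rho_i(t \otimes \gamma)$ pairwise commute (with Fuglede's theorem lifting this to commutativity of the adjoints), and both functional-calculus substitutions preserve commutativity. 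The only nontrivial bookkeeping is to verify that $f(c_i)$ and $a_i|a_i|$ genuinely stay inside the spectral subspace $A_\gamma$, which is the exact role of the equivariance identities $f(\lambda z) = \lambda f(z)$ and $h(\lambda z) = \lambda h(z)$.
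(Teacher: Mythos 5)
Your argument is correct and follows essentially the same route as the paper: both classify $\bS^1$-equivariant $*$-homomorphisms out of $C_0((0,1])\otimes C(\bS^1)$ by normal contractions in the spectral subspace $A_\gamma$ and pass between $\rho(t\otimes 1)$ and $aa^*$ using the radially equivariant functions $z\mapsto z|z|^{-1/2}$ and $z\mapsto z|z|$, with Fuglede's theorem and \Cref{pr.sdimclassical} handling the strong case. The only cosmetic difference is that the paper pairs $\rho$ directly with the normal contraction $a$ satisfying $\rho(t\otimes 1)=aa^*$, so its weak case needs only a rescaling, whereas your pairing $\rho\leftrightarrow c=\rho(t\otimes\gamma)$ forces the (correctly justified) extra observation that $\sum_i|c_i|$ and $\sum_i|c_i|^2$ are simultaneously invertible.
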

\begin{proof}
We characterize images $\rho(t \otimes 1)$ of $\bS^1$-equivariant $*$-homomorphisms $\rho: C_0((0, 1]) \otimes C(\bS^1) \to A$ as exactly those elements which may be written as $aa^*$, where $a$ is a normal contraction in $A_\gamma$.

Since $\rho$ is contractive and has commutative domain, $b := \rho(t \otimes \gamma)$ is a normal contraction, and equivariance of $\rho$ shows that $b \in A_\gamma$. A simple computation with the functional calculus shows that $a := b |b|^{-1/2}$ is a normal contraction in $A_\gamma$ with $aa^* = \rho(t \otimes 1)$.

Conversely, suppose $a \in A_\gamma$ is a normal contraction. Then $\sigma(a)$ is $\bS^1$-invariant, with
\bes
\sigma(a) = \bigcup_{s \in K} s \cdot \bS^1
\ees
for some compact $K \subseteq [0, 1]$. We then have from the spectral theorem that
\bes
C^*(a) \cong \{f \in C(K, C(\bS^1)): f(0) = 0 \text{ if } 0 \in K\} \subseteq C(K) \otimes C(\bS^1)
\ees
via the isomorphism which represents $a$ as the function $a(s) = s \gamma$. On the other hand, consider the map $\rho$ defined by composing

\[ C_0((0, 1]) \otimes C(\bS^1) \hookrightarrow C([0, 1]) \otimes C(\bS^1) \xrightarrow{\phi \otimes \text{id}} C(K) \otimes C(\bS^1), \]
where $\phi(f)(s) = f(s^2)$. Then $\rho$ is an $\bS^1$-equivariant $*$-homomorphism, and its range is a subset of $C^*(a)$. Moreover, $\rho(t \otimes 1) = aa^*$.

The above argument immediately implies the given formula for the local-triviality dimension. For the weak local-triviality dimension, we have not assumed that the $a_i$ are contractions, but this may be accomplished with a rescaling. For the strong local-triviality dimension, if the normal elements $a_i$ commute, the Fuglede-Putnam-Rosenblum theorem~\cite{rosenblum} guarantees that $a_i^* a_j = a_j a_i^*$, and hence $C^*(a_0, \ldots, a_n)$ is commutative. Therefore, the third equality follows from \Cref{pr.sdimclassical}.
\end{proof}

For actions of $\mathbb{Z}/k$, we may again recast the local-triviality dimensions in terms of the spectral subspaces, but there is an additional requirement on the elements considered. For each $k\in \bZ^+$, let
\begin{equation*}
\stars{k} := \{ se^{2 \pi i m/k}: 0 \leq s \leq 1, m \in \{0, \ldots, k - 1\} \} = \{z \in \mathbb{C}: z^k \in [0, 1]\}
\end{equation*}
denote the star-convex set with center $0$ generated by the $k$th roots of unity.

\begin{proposition}\label{pr.starkcounting}
Let $\bZ/k$ act on a unital $C^*$-algebra $A$, and let $\gamma$ denote a generator of $\bZ/k = \widehat{\bZ/k}$. Then 
\begin{equation*}
\trivdim{A}{\bZ/k} = \min\left\{n \in \mathbb{N}: \exists \, \operatorname{ normal } \, a_0, \ldots, a_n \in A_\gamma, \, \sigma(a_i) \subseteq \stars{k}, \, \sum\limits_{i=0}^n a_i a_i^* = 1\right\},
\end{equation*} 
\begin{equation*}
\wtrivdim{A}{\bZ/k} = \min\left\{n \in \mathbb{N}: \exists \, \operatorname{ normal } \, a_0, \ldots, a_n \in A_\gamma, \, \sigma(a_i) \subseteq \stars{k}, \, \sum\limits_{i=0}^n a_i a_i^*  \, \operatorname{ is } \, \operatorname{ invertible}\right\},
\end{equation*} 
and
\begin{equation*}
\strivdim{A}{\bZ/k} = \min\left\{n \in \mathbb{N}: \exists \, \operatorname{ normal} \, \operatorname{commuting} a_0, \ldots, a_n \in A_\gamma, \, \sigma(a_i) \subseteq \stars{k}, \, \sum\limits_{i=0}^n a_i a_i^* = 1\right\}.
\end{equation*}
\end{proposition}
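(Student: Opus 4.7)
The plan is to mirror the proof of \Cref{pr.normaldegcounting}, now adapted to $\bZ/k$. The key new ingredient is the spectral constraint $\sigma(a_i)\subseteq \stars{k}$: membership $a_i\in A_\gamma$ only forces $\sigma(a_i)$ to be invariant under multiplication by $e^{2\pi i/k}$, which is strictly weaker than containment in $\stars{k}$, so this condition must be imposed by hand. Note also that, because $\stars{k}\subseteq\{z\in\bC:|z|\leq 1\}$ and the $a_i$ are normal, this spectral condition automatically makes each $a_i$ a contraction; thus, unlike in the proof of \Cref{pr.normaldegcounting}, the weak case here requires no rescaling argument.

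The central technical step is to identify elements of the form $\rho(t\otimes 1)$, for $\bZ/k$-equivariant $*$-homomorphisms $\rho: C_0((0,1])\otimes C(\bZ/k)\to A$, with precisely those of the form $aa^*$ where $a\in A_\gamma$ is normal and $\sigma(a)\subseteq \stars{k}$. For the forward direction, I would take $a:=\rho(t^{1/2}\otimes\gamma)$: the element $t^{1/2}\otimes\gamma$ is a normal contraction in the $\gamma$-spectral subspace of $C_0((0,1])\otimes C(\bZ/k)$ whose spectrum is precisely $\stars{k}$, and these properties transfer through $\rho$, while $(t^{1/2}\otimes\gamma)(t^{1/2}\otimes\gamma)^* = t\otimes 1$ yields $aa^* = \rho(t\otimes 1)$. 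For the converse, given such an $a$, I would build $\rho$ via functional calculus. The two conditions on $a$ together force $\sigma(a)=\bigcup_{j=0}^{k-1} e^{2\pi ij/k} K$ for some compact $K\subseteq[0,1]$, and the map $\psi:\sigma(a)\setminus\{0\}\to(0,1]\times\bZ/k$ defined by $re^{2\pi ij/k}\mapsto (r^2, j)$ (with $r>0$ and $0\leq j\leq k-1$) is well defined and continuous, since $\sigma(a)\setminus\{0\}$ is the disjoint union of the half-open rays $e^{2\pi ij/k}\cdot(0,1]\cap \sigma(a)$. Pulling back along $\psi$ and composing with the spectral-theorem isomorphism $C_0(\sigma(a)\setminus\{0\})\cong C^*(a)$ yields the desired equivariant $*$-homomorphism, with $\rho(t\otimes 1)=aa^*$.

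With this equivalence in hand, the three formulas follow essentially as in the proof of \Cref{pr.normaldegcounting}: the plain and weak cases translate the sum conditions $\sum_i a_i a_i^* = 1$, respectively $\sum_i a_i a_i^*$ invertible, directly through the correspondence; and the strong case uses the Fuglede--Putnam--Rosenblum theorem to upgrade commutativity of the normal elements $a_i$ to commutativity of $C^*(a_0,\ldots,a_n)$, whereupon \Cref{pr.sdimclassical} closes the argument. The subtlest point, I expect, is recognizing why $\sigma(a)\subseteq\stars{k}$ is exactly the right spectral hypothesis: it is precisely this condition that pins down the ``angular part'' $e^{2\pi ij/k}$ in each element of $\sigma(a)$ unambiguously, making the map $\psi$ well defined and the converse construction possible.
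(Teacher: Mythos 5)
Your proposal is correct and follows essentially the same route as the paper, which likewise reduces everything to a correspondence between equivariant morphisms $\rho$ and normal elements $a\in A_\gamma$ with $\sigma(a)\subseteq\stars{k}$ and $aa^*=\rho(t\otimes 1)$, then cites the argument of \Cref{pr.normaldegcounting}. Your only (harmless) deviations are cosmetic: you take $a=\rho(t^{1/2}\otimes\gamma)$ directly rather than the paper's $b|b|^{-1/2}$ with $b=\rho(t\otimes\gamma)$, and you read off $\sigma(a)\subseteq\stars{k}$ from $\sigma(t^{1/2}\otimes\gamma)=\stars{k}$ instead of applying the spectral mapping theorem to the positive contraction $(\rho(t\otimes\gamma))^k=\rho(t^k\otimes 1)$.
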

\begin{proof}
The proof is essentially identical to that of the previous proposition, with the exception that the normal element $\rho(t \otimes \gamma)$ must have spectrum in $\stars{k}$. To see this, note that $(\rho(t \otimes \gamma))^k = \rho(t^k \otimes 1)$ is a positive contraction, and apply the spectral mapping theorem.

Similarly, any normal element $a \in A_\gamma$ has $\mathbb{Z}/k$-invariant spectrum. If, in addition, $\sigma(a) \subseteq \stars{k}$, then we may decompose the spectrum as
\begin{equation*}
\sigma(a) = \bigcup_{s \in K} s \cdot \{\zeta \in \bS^1: \zeta^k = 1\}
\end{equation*}
for some $K \subseteq [0, 1]$. The rest of the proof is then analogous to the previous case.
\end{proof}

This formulation of local-triviality dimension is reminiscent of the results and questions in \cite[\S 3]{taghavi} on graded Banach and $C^*$-algebras, which had significant influence on the development of noncommutative Borsuk-Ulam theory. Note also that when $k = 2$, a normal element $a$ with $\sigma(a) \subseteq \stars{2}$ is simply a self-adjoint contraction. Consequently, estimating the local-triviality dimensions for $\mathbb{Z}/2$-actions is considerably more tractable than the other cases, as the following result on graph $C^*$-algebras illustrates. 

\begin{lemma}\label{le.rowcolsumZ2}
Suppose $E$ is a finite graph with adjacency matrix $A_E$ such that $\begin{pmatrix} 1 & 1 &  \cdots & 1 \end{pmatrix}$ is in the row space of $I + A_E$, as a nonnegative combination of nonzero rows $v \in X \subseteq E^0$ (that is, no $v \in X$ is a sink). If $n =  \max\limits_{w \in E^0} \sum_{v \in X} (A_E)_{vw}$, then $\trivdim{C^*(E)}{\bZ/2} \leq 2n - 1$. 
\end{lemma}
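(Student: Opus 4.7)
The plan is to apply \Cref{pr.starkcounting} with $k = 2$: since $\stars{2} = [-1,1]$, the bound $\trivdim{C^*(E)}{\bZ/2} \leq 2n - 1$ is equivalent to exhibiting $2n$ self-adjoint elements $a_0, \ldots, a_{2n-1}$ in the spectral subspace $A_\gamma$ of the $\bZ/2$-action satisfying $\sum_i a_i^2 = 1$. The spectral constraint $\sigma(a_i)\subseteq \stars{2}$ is automatic, since self-adjointness together with $a_i^2 \leq 1$ gives $\sigma(a_i)\subseteq[-1,1]$.

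The first step is combinatorial: partition $E_X := \{e \in E^1 : s(e) \in X\}$ into $n$ color classes $C_1, \ldots, C_n$ so that within each $C_j$ the edges have pairwise distinct ranges. The definition of $n = \max_{w \in E^0}\sum_{v \in X}(A_E)_{vw}$ ensures that at most $n$ edges of $E_X$ can share a given target vertex, so a greedy coloring produces such a partition.

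Using the coefficients $c_v \geq 0$ from the hypothesis $(1,\ldots,1) = \sum_{v \in X} c_v \cdot \mathrm{row}_v(I+A_E)$, I would set $T_j := \sum_{e \in C_j} \sqrt{c_{s(e)}/2}\, S_e$ and form the self-adjoint elements $a_{2j-1} := T_j + T_j^*$ and $a_{2j} := \mathrm{i}(T_j - T_j^*)$, all of which lie in $A_\gamma$. A direct expansion gives the pivotal identity
\[
a_{2j-1}^2 + a_{2j}^2 = 2\bigl(T_j T_j^* + T_j^* T_j\bigr),
\]
since the $T_j^2$ and $(T_j^*)^2$ contributions cancel between the two squares. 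Within $C_j$, the distinct-range property forces $S_e S_f^* = 0$ for $e \neq f$, while distinct partial isometries always satisfy $S_e^* S_f = \delta_{ef} P_{r(e)}$, so $T_j T_j^* = \sum_{e \in C_j} (c_{s(e)}/2)\, S_e S_e^*$ and $T_j^* T_j = \sum_{e \in C_j} (c_{s(e)}/2)\, P_{r(e)}$.

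Summing over $j$ lets $e$ range over all of $E_X$; grouping by source vertex $v \in X$ and invoking the Cuntz--Krieger relation $P_v = \sum_{s(e)=v} S_e S_e^*$ (valid because no $v \in X$ is a sink) recasts the total as
\[
\sum_{v \in X} c_v \sum_{w \in E^0}(\delta_{vw}+(A_E)_{vw})\,P_w \;=\; \sum_{w \in E^0}\Bigl(\sum_{v \in X} c_v (I+A_E)_{vw}\Bigr) P_w \;=\; \sum_{w \in E^0} P_w \;=\; 1,
\]
with the row-space hypothesis applied in the last equality. The main obstacle is selecting a coloring that simultaneously shrinks the naive count of $2|E_X|$ self-adjoints down to $2n$ while keeping enough orthogonality (through distinct ranges) for the cross terms in $T_j T_j^*$ to vanish; without this compression, Cuntz--Krieger could not telescope $T_j T_j^*$ back into the projections $P_v$ needed to match the row-space identity. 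Once the coloring is fixed, the rest is routine manipulation with the graph $C^*$-algebra relations.
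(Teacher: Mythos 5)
Your proposal is correct and follows essentially the same route as the paper's proof: partition the edges sourced in $X$ into $n$ classes with pairwise distinct ranges, form the (rescaled) real and imaginary parts of the resulting partial-isometry sums, and use the Cuntz--Krieger relations together with the row-space hypothesis to telescope $\sum_e \chi_{s(e)}(S_eS_e^* + S_e^*S_e)$ into $\sum_w P_w = 1$. The only differences are cosmetic (where the normalization constant is placed, and a harmless off-by-one in the indexing of the $2n$ self-adjoint witnesses).
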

\begin{proof}
By assumption, there are nonnegative coefficients $\chi_v$, $v \in X$ such that for each $w \in E^0$,
\begin{equation*}
 \sum_{v \in X} \chi_v (I + A_E)_{vw} = 1.
\end{equation*}
Since no $v \in X$ is a sink, we have that $0 < |s^{-1}(v)| < \infty$, and hence
\[
\sum_{e \in s^{-1}(X)} \chi_{s(e)} ( S_e S_e^* + S_e^* S_e) \,\, = \,\, \sum\limits_{v \in X} \chi_v \, \left[ \sum_{e \in s^{-1}(v)} \, S_e S_e^* + \sum_{e \in s^{-1}(v)} \, S_e^* S_e \right] \, = \]
\[ \sum\limits_{v \in X} \chi_v \, \left[ P_v + \sum_{e \in s^{-1}(v)} \,P_{r(e)} \right] \,\, = \,\, \sum_{v \in X} \chi_v \left[  P_v  + \sum_{w \in E^0} \sum_{e \in  s^{-1}(v) \cap r^{-1}(w)} P_w  \right] \,=  \]
\[ \sum_{v \in X} \chi_v \left[  P_v + \sum_{w \in E^0} (A_E)_{vw} \, P_w \right] \,\, = \,\, \sum_{v \in X} \chi_v \sum_{w \in E^0} (I + A_E)_{vw} P_w = \]
\[ \sum_{w \in E^0} \left[ \sum_{v \in X} \chi_v (I + A_E)_{vw} \right] \, P_w \,\, = \,\,   \sum_{w \in E^0} P_w \, = \, 1.\]

Next, consider the largest sum $n =  \max\limits_{w \in E^0} \sum_{v \in X} (A_E)_{vw}$. We may partition $s^{-1}(X)$ into subsets $Y_1, \ldots, Y_n$ such that for each fixed $Y_i$, no two edges in $Y_i$ have the same range. This implies that $S_e S_f^* = 0$ for any distinct edges $e, f \in Y_i$, in addition to the automatic condition $S_e^* S_f = 0$. It follows that $B_i := \sqrt{2} \sum_{e \in Y_i} \sqrt{\chi_{s(e)}} \, S_e$, which is in the $-1$ spectral subspace, satisfies
\begin{equation*}
\sum\limits_{i=1}^n \text{Re}(B_i)^2 + \text{Im}(B_i)^2 = \sum\limits_{i=1}^n \frac{1}{2} \, (B_i B_i^* + B_i^* B_i) = \sum\limits_{i=1}^n \sum_{e \in Y_i} \chi_{s(e)} (S_e S_e^* + S_e^* S_e) = 1,
\end{equation*}
and hence $\trivdim{C^*(E)}{\bZ/2} \leq 2n - 1$ by \Cref{pr.starkcounting}.
\end{proof}

Some assumption about sinks is necessary in \Cref{le.rowcolsumZ2}. First, note that if $E$ has an isolated vertex $v$, then the gauge $\bZ/2$-action is such that the $-1$ spectral subspace generates a proper ideal of $C^*(E)$, so the action is not free (consistent with the result \cite[Corollary 4.4]{chr} on Leavitt path algebras) and hence has infinite local-triviality dimension. However, if $A_E = \begin{pmatrix} 0 & 0 \\ 0 & 1\end{pmatrix}$, then $I  + A_E = \begin{pmatrix} 1 & 0 \\ 0 & 2 \end{pmatrix}$ still has $\begin{pmatrix} 1 & 1 \end{pmatrix}$ in the positive cone of its row space. 

\begin{theorem}\label{th.z2-lt}
  Suppose $E$ is a finite graph with no sinks, such that no
  two edges share the same source and range. If $\begin{pmatrix} 1 & 1 &  \cdots & 1 \end{pmatrix}$ is a nonnegative combination of rows of $I + A_E$, then
\begin{equation*} \trivdim{C^*(E)}{\bZ/2} \leq 2 \, |E^0| - 1. \end{equation*}
\end{theorem}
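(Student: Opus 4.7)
The plan is to deduce this theorem as a direct corollary of \Cref{le.rowcolsumZ2}, so the work amounts to checking that the hypotheses of the theorem imply a favorable bound on the quantity $n = \max_{w} \sum_{v \in X}(A_E)_{vw}$ appearing in the lemma.

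First I would take the assumed expression $\begin{pmatrix} 1 & \cdots & 1 \end{pmatrix} = \sum_{v} \chi_v (I + A_E)_v$ with $\chi_v \geq 0$, and let $X = \{v \in E^0 : \chi_v > 0\}$ be the set of rows that actually contribute. Since $E$ is assumed to have no sinks, no element of $X$ is a sink, so the hypotheses of \Cref{le.rowcolsumZ2} are satisfied on the nose. (The rows need not be \emph{nonzero} beyond the $v$-th diagonal entry of $I$, but that diagonal entry alone ensures each row used is nonzero.)

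Next I would use the assumption that no two edges of $E$ share both source and range, which means $(A_E)_{vw} \in \{0,1\}$ for every pair $(v,w)$. Consequently
\[
n \;=\; \max_{w \in E^0} \sum_{v \in X} (A_E)_{vw} \;\leq\; |X| \;\leq\; |E^0|.
\]
Applying \Cref{le.rowcolsumZ2} with this $X$ and $n$, we obtain
\[
\trivdim{C^*(E)}{\bZ/2} \;\leq\; 2n - 1 \;\leq\; 2\,|E^0| - 1,
\]
which is exactly the stated bound.

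There is essentially no obstacle here; the content of the theorem is really just the packaging of the lemma under the two clean combinatorial hypotheses (no sinks and no parallel edges), which together force the partition in the lemma's proof to use at most $|E^0|$ blocks. Thus the proof should be short and purely referential to \Cref{le.rowcolsumZ2}.
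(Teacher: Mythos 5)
Your proposal is correct and matches the paper's own argument: both deduce the theorem from \Cref{le.rowcolsumZ2} by observing that the no-parallel-edges hypothesis forces every entry of $A_E$ to be at most one, so that $n = \max_{w}\sum_{v\in X}(A_E)_{vw} \leq |X| \leq |E^0|$. Your additional remarks on choosing $X$ and verifying the no-sink condition are fine but add nothing beyond what the paper already relies on.
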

\begin{proof}
Since no two edges have the same source and range, each entry of the adjacency matrix is at most one. Therefore, $n :=  \max\limits_{w \in E^0} \sum_{v \in X} (A_E)_{vw}$, as computed in \Cref{le.rowcolsumZ2}, is at most $|E^0|$.
\end{proof}

To estimate the \textit{weak} local-triviality dimension, one may replace $\begin{pmatrix} 1 & 1 &  \cdots & 1 \end{pmatrix}$ with any vector $\begin{pmatrix} a_1 & a_2 & \ldots & a_n \end{pmatrix}$ such that each $a_i$ is strictly positive, ultimately leading to the following partition argument.

\begin{proposition}\label{pr.z2-fin}
Let $E$ be a finite graph, and let $\bZ/2$ act on $C^*(E)$ via restriction of the gauge action. Suppose there are collections $Y_1, \ldots, Y_n$ of edges such that no two edges in $Y_j$ have the same range, and for each $v \in E^0$, either
\begin{itemize}
\item $r^{-1}(v) \cap \bigcup\limits_{j=1}^n Y_j \not= \varnothing$, or
\item $s^{-1}(v) \not= \varnothing$ and $s^{-1}(v) \subseteq \bigcup\limits_{j=1}^n Y_j$.
\end{itemize}
Then $\wtrivdim{C^*(E)}{\bZ/2} \leq 2n - 1$.
\end{proposition}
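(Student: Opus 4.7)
The plan is to mirror the construction in \Cref{le.rowcolsumZ2} and \Cref{th.z2-lt}, but trade the delicate coefficient-balancing needed to hit $1$ exactly for a much softer argument, since \Cref{pr.starkcounting} only requires that the relevant sum of squares be invertible. Specifically, it suffices to exhibit $2n$ self-adjoint elements of the $(-1)$-spectral subspace of the $\bZ/2$-action (which automatically have spectrum in $\stars{2} = [-1,1]$ after rescaling, with invertibility preserved) whose sum of squares is invertible.

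For each $j = 1, \ldots, n$, set $B_j := \sqrt{2}\sum_{e \in Y_j} S_e$. Since the nontrivial element of $\bZ/2$ acts by $S_e \mapsto -S_e$, each $B_j$ lies in the $(-1)$-spectral subspace, and so do $\mathrm{Re}(B_j)$ and $\mathrm{Im}(B_j)$. Using the no-common-range condition within each $Y_j$ together with the always-valid relation $S_e^*S_f = 0$ for distinct edges $e \neq f$, a short computation (identical in spirit to the one in \Cref{le.rowcolsumZ2}) gives
\[ \sum_{j=1}^{n}\bigl(\mathrm{Re}(B_j)^2 + \mathrm{Im}(B_j)^2\bigr) \;=\; \tfrac{1}{2}\sum_{j=1}^{n} (B_jB_j^* + B_j^*B_j) \;=\; M, \]
where $M := \sum_{j=1}^n\sum_{e \in Y_j}\bigl(S_eS_e^* + P_{r(e)}\bigr)$. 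Everything now reduces to showing that $M$ is invertible in $C^*(E)$.

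The key step, and the only place where the two-part hypothesis does real work, is to verify that $M \geq 1$. The enabling observation is that $M$ commutes with every vertex projection $P_v$: this is immediate for the $P_{r(e)}$ summands, and for $S_eS_e^*$ it follows from $S_eS_e^* \leq P_{s(e)}$, which yields $P_v S_eS_e^* P_v = \delta_{v,s(e)} S_eS_e^*$. Because $E$ is finite, $1 = \sum_{v \in E^0} P_v$, so $M$ is block-diagonal in this decomposition, and it suffices to show $P_v M P_v \geq P_v$ for every $v \in E^0$. Here the dichotomy kicks in cleanly: if $r^{-1}(v) \cap \bigcup_j Y_j \neq \varnothing$, some summand $P_{r(e)}$ with $r(e) = v$ contributes $P_v$ to $P_v M P_v$; if instead $s^{-1}(v) \neq \varnothing$ and $s^{-1}(v) \subseteq \bigcup_j Y_j$, then $v$ is not a sink, and the Cuntz--Krieger relation $\sum_{e \in s^{-1}(v)} S_eS_e^* = P_v$ together with $s^{-1}(v) \subseteq \bigcup_j Y_j$ forces $\sum_{e \in Y_j, s(e) = v} S_eS_e^* \geq P_v$ inside $P_v M P_v$. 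In either case $P_v M P_v \geq P_v$, whence $M \geq \sum_v P_v = 1$, and $M$ is invertible as required.

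I do not anticipate a real obstacle: the main point is the dichotomous case-split ensuring $P_v \leq M$ for every vertex (one case via the range contribution, the other via Cuntz--Krieger on the source side), and the mild book-keeping needed to see that $M$ is genuinely block-diagonal across the projections $P_v$. Potential overlap among the $Y_j$ is harmless, since repetition only makes $M$ larger and the same lower bound still holds; one may assume the $Y_j$ disjoint without loss of generality.
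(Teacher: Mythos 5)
Your proposal is correct and follows essentially the same route as the paper: the same elements $\sum_{e\in Y_j} S_e$, the same identity $\mathrm{Re}^2+\mathrm{Im}^2 = \tfrac12(R_jR_j^*+R_j^*R_j)$, and the same dichotomy (range contribution versus the Cuntz--Krieger relation at a non-sink) to see that the sum dominates every vertex projection. Your verification of invertibility via commutation with the $P_v$ and the block bound $P_vMP_v\ge P_v$ is a slightly more detailed (and marginally stronger, giving $M\ge 1$) version of the paper's one-line observation that an element dominating each of the finitely many vertex projections is bounded below by $\tfrac{1}{|E^0|}\sum_v P_v = \tfrac{1}{|E^0|}$.
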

\begin{proof}
Define $R_j := \sum_{e \in Y_j} S_e$, which is in the $-1$ spectral subspace. Then $R_j^* R_j = \sum_{e \in Y_j} S_e^* S_e$ since $S_e^* S_f = 0$ for $e \not= f$. Similarly, since $r(e) \not= r(f)$ for any $e \not= f$ in $Y_j$, we also have $R_j R_j^* = \sum_{e \in Y_j} S_e S_e^*$. Items \ref{cond:gph_range} and \ref{cond:gph_sum} of \Cref{def.graphalgebra} show that for each vertex $v$, either $P_v \leq R_j^*R_j$ for some $j$, or $P_v \leq \sum_{j=1}^n R_j R_j^*$. It follows that 
\[ \sum\limits_{j=1}^n 2(\text{Re}(Y_j)^2 + \text{Im}(Y_j)^2) = \sum\limits_{j=1}^n R_j R_j^* + R_j^* R_j\] 
dominates each vertex projection, hence it is invertible. The result follows from \Cref{pr.starkcounting}.
\end{proof}

\Cref{pr.z2-fin} and the previous remarks imply that for finite graphs,
\begin{equation}
\wtrivdim{C^*(E)}{\bZ/2} < \infty \,\,\, \iff \,\,\, \bZ/2 \curvearrowright C^*(E) \text{ freely} \,\,\, \iff \,\,\, E \text{ has no isolated vertices},
\end{equation}
where equivalence of the second and third items also follows from \cite[Corollary 4.4]{chr}. We may then select $n = |E^1|$ in \Cref{pr.z2-fin}.

\begin{proposition}\label{pr.z2-isolatedreferee}
Let $E$ be a finite graph, and let $\bZ/2$ act on $C^*(E)$ via restriction of the gauge action. If $E$ has no isolated vertices, then $\wtrivdim{C^*(E)}{\bZ/2} \leq 2 \, |E^1| - 1$.
\end{proposition}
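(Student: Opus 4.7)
The plan is to apply \Cref{pr.z2-fin} with the finest possible partition: take each of the $|E^1|$ edges as its own singleton set $Y_j = \{e_j\}$, so that $n = |E^1|$ and $\bigcup_{j=1}^n Y_j = E^1$. The condition that no two edges in a given $Y_j$ share the same range is then vacuous, since each $Y_j$ is a singleton, so the only real work is to verify the vertex condition.

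Fix $v \in E^0$. Because $E$ has no isolated vertices, $v$ is either the range of some edge or the source of some edge (possibly both). If $v$ is the range of some edge $e$, then $\{e\} = Y_j$ for some $j$, so $e \in r^{-1}(v) \cap \bigcup_j Y_j$, which is therefore nonempty. Otherwise, $v$ must be the source of some edge, so $s^{-1}(v) \neq \varnothing$, and trivially $s^{-1}(v) \subseteq E^1 = \bigcup_j Y_j$. Either way, the alternative required by \Cref{pr.z2-fin} is satisfied, and we conclude $\wtrivdim{C^*(E)}{\bZ/2} \leq 2|E^1| - 1$.

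There is no real obstacle here; the statement is essentially a clean corollary of \Cref{pr.z2-fin} obtained by making the partition as fine as possible. The role of the hypothesis that $E$ has no isolated vertices is precisely to ensure that the vertex condition in \Cref{pr.z2-fin} is satisfied by \emph{every} $v \in E^0$, matching the previously noted equivalence between freeness of the gauge $\bZ/2$-action and the absence of isolated vertices.
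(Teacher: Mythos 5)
Your proof is correct and is exactly the paper's argument: apply \Cref{pr.z2-fin} with each $Y_j$ a singleton edge, using the absence of isolated vertices to check the vertex condition. The paper states this in one line; your fuller verification of the hypotheses is accurate.
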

\begin{proof}
Apply \Cref{pr.z2-fin}, where each $Y_j$ consists of a single edge.
\end{proof}

We will also have some use for the following simple observation on the local-triviality dimension of a tensor product.

\begin{lemma}\label{le.tens-dim}
Suppose $A$ and $B$ are unital $C^*$-algebras with actions of $\bZ/k$. Then for any choice of $C^*$-algebra tensor product, the diagonal action satisfies
  \begin{equation}\label{eq:1}
    \wtrivdim{A\otimes B}{\bZ/k} \le \min\left(\wtrivdim{A}{\bZ/k},\wtrivdim{B}{\bZ/k}\right),
  \end{equation}
and similarly for the plain and strong local-triviality dimensions.
\end{lemma}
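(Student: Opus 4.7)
By the symmetric roles of $A$ and $B$, it suffices to establish the bound $\wtrivdim{A\otimes B}{\bZ/k} \le \wtrivdim{A}{\bZ/k}$ and its analogues for the plain and strong variants; the other half follows by swapping the tensor factors. The strategy is simply to ``pad with $1_B$'': given equivariant witnesses for $A$, compose them with the unital $*$-homomorphism $\iota:A\to A\otimes B$, $a\mapsto a\otimes 1_B$, which exists for any $C^*$-tensor product and intertwines the given action $\alpha$ of $\bZ/k$ on $A$ with the diagonal action on $A\otimes B$ (since $1_B$ is fixed).

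Concretely, assume $\wtrivdim{A}{\bZ/k}=n$ and let $\rho_0,\ldots,\rho_n:C_0((0,1])\otimes C(\bZ/k)\to A$ be $\bZ/k$-equivariant $*$-homomorphisms with $\sum_i\rho_i(t\otimes 1)$ invertible in $A$. Set $\tilde\rho_i:=\iota\circ\rho_i$. Each $\tilde\rho_i$ is a $\bZ/k$-equivariant $*$-homomorphism into $A\otimes B$ (with its diagonal action), and
\begin{equation*}
\sum_{i=0}^n\tilde\rho_i(t\otimes 1)=\left(\sum_{i=0}^n\rho_i(t\otimes 1)\right)\otimes 1_B,
\end{equation*}
which is invertible in $A\otimes B$ because its first tensor factor is invertible in $A$. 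This establishes \eqref{eq:1}. If the original $\rho_i$ witness the plain local-triviality dimension, i.e.\ the sum equals $1_A$, then the padded sum equals $1_A\otimes 1_B=1_{A\otimes B}$, giving the same bound for $\trivdim{\,\cdot\,}{\bZ/k}$.

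For the strong version, I would invoke \Cref{pr.sdimclassical}: if $\bigcup_i\operatorname{Ran}(\rho_i)\subseteq A$ generates a commutative $C^*$-subalgebra, then $\bigcup_i\operatorname{Ran}(\tilde\rho_i)\subseteq A\otimes 1_B\subseteq A\otimes B$ also does, so the strong bound transfers.

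\textbf{Anticipated obstacle.} The argument is entirely formal; the only point deserving care is verifying that $a\mapsto a\otimes 1_B$ extends to a well-defined unital $*$-homomorphism for whichever $C^*$-tensor norm is in use, so that $\iota$ is genuinely available in the generality claimed. This is standard (it is the tensor of $\id_A$ with the unit inclusion $\bC\hookrightarrow B$, and unital $*$-homomorphisms are compatible with any $C^*$-tensor norm via the universal property), so no real obstacle arises.
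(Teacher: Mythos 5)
Your proposal is correct and uses essentially the same idea as the paper: pad the witnesses for $A$ with $1_B$ via the unital equivariant embedding $a\mapsto a\otimes 1_B$. The paper phrases this at the level of the normal spectral-subspace elements from \Cref{pr.starkcounting} rather than the homomorphisms $\rho_i$ themselves, but the argument is the same.
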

\begin{proof}
If the elements $a_0, \ldots, a_n \in A$ witness $\wtrivdim{A}{\bZ/k}\le n$ in the language of \Cref{pr.starkcounting}, then the elements $a_0 \otimes 1, \ldots, a_n \otimes 1 \in A\otimes B$ witness $\wtrivdim{A \otimes B}{\bZ/k} \leq n$. The right tensor factor follows a similar pattern.
\end{proof}

\Cref{ex.strict} shows that the inequality in \Cref{le.tens-dim} can be strict. In fact, it is possible for $\bZ/k$ to act non-freely on $A$ and $B$ and freely on their tensor product as soon as $k\ge 3$.

%%%%%%%%%%%%%%%%%%%%%%%%%%%%%%%%%%%%%%%%%%%%%%%%%%%%%%%%%%%%%%%%%%%%%%%%%%%%%%%%%%%%%%
%%%%%%%%%%%%%%%%%%%%%%%%%%%%%%%%%%%%%%%%%%%%%%%%%%%%%%%%%%%%%%%%%%%%%%%%%%%%%%%%%%%%%%
\section{Finite acyclic graphs}\label{se.fin-acyclic-wdim}

In this section, we consider any graph $E$ which is finite and acyclic. By  \cite[Proposition 4.3]{chr}, the gauge $\bZ/k$-action on $C^*(E)$ is free if and only if for every sink $v$ of $E$, there is a path of length $k - 1$ which ends at $v$. In this case, we seek to compute the local-triviality dimensions using graph properties. 

If $E$ is finite and acyclic, then $C^*(E)$ is finite-dimensional, so it decomposes as a direct sum of matrix algebras. These summands are indexed by the sinks of $E$. However, we caution the reader that the summands need not be \textit{equivariantly} isomorphic to the graph $C^*$-algebra of an $n$-vertex path, since $M_n$ may be represented as a graph $C^*$-algebra in many different ways. However, the local-triviality dimension of a direct sum is the maximum local-triviality dimension of the summands, so this procedure reduces the problem to graphs with one sink. We may also use \cite[Theorem 3.2]{OutSplit} to replace $E$ with its maximally out-split version $F$, whose graph $C^*$-algebra is gauge-equivariantly isomorphic with~$C^*(E)$. Every path in $E$ extends to a path ending at a sink, and $F$ is such that
\begin{itemize}
\item $F^0$ is the set of directed paths in $E$ terminating at a sink, and 
\item there is some $e\in F^1$ with $s(e)=\mu$ and $r(e)=\nu$ iff $|\nu|=|\mu|-1$ and we may write $\mu=\alpha\nu$. 
\end{itemize}
The connected components of $F$ are in bijection with the sinks of $E$.

Fix $k \in \mathbb{Z}^+$. For any residue $i\in \{0,\ldots,k-1\}$ modulo $k$ and any sink $v\in E^0$, let $\#^v_i$ be the number of elements of $F^0$ terminating at $v$ and having length $i \, (\mathrm{mod}\ k)$. Also, define 
\begin{equation}\label{eq:quot}
  \mathrm{quot}^v := \left\lceil \frac{\max_i \#^v_i}{\min_j \#^v_j}\right\rceil,
\end{equation}
where the value is $\infty$ if the denominator vanishes.

\begin{theorem}\label{th.fin-acyclic-wdim}
  For a finite acyclic graph $E$, we have
  \begin{equation*}
    \wtrivdim{C^*(E)}{\bZ/k} = \max_{\mathrm{sinks} \,\, v}(\mathrm{quot}^v-1). 
  \end{equation*}
\end{theorem}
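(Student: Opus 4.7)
The plan is to reduce the problem to single-sink components and compute each weak local-triviality dimension via \Cref{pr.starkcounting} using block-matrix arithmetic. I would first pass from $E$ to the maximally out-split $F$ so that $C^*(E) \cong C^*(F)$ gauge-equivariantly. As already described before the theorem statement, $F$ decomposes into a disjoint union of rooted trees $F_v$, one per sink $v$, in each of which every non-sink vertex has a unique outgoing edge; hence $C^*(F) \cong \bigoplus_v C^*(F_v) \cong \bigoplus_v M_{n_v}$ with $n_v = \sum_j \#_j^v$, the matrix units being $e_{\mu,\nu} = S_{\alpha_\mu}S_{\alpha_\nu}^*$, where $\alpha_\mu$ is the unique $F_v$-path from $\mu$ to $v$. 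Since $\wtrivdim{\,\cdot\,}{\bZ/k}$ of a direct sum equals the maximum over its summands (equivariant projections on one side; combine witnesses with zero padding on the other), it suffices to prove $\wtrivdim{C^*(F_v)}{\bZ/k} = \mathrm{quot}^v - 1$ for each sink $v$.

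Fix a sink $v$, let $A = C^*(F_v)$, and decompose $\bC^{n_v} = \bigoplus_{j \in \bZ/k} \mathcal{H}_j$ so that $\mathcal{H}_j$ is spanned by basis vectors indexed by paths of length $\equiv j \pmod k$; thus $\dim \mathcal{H}_j = \#_j^v$. Because $e_{\mu,\nu}$ lies in the $\gamma^{|\mu|-|\nu|}$-spectral subspace, the subspace $A_\gamma$ for the generator $\gamma$ of $\widehat{\bZ/k}$ consists exactly of block-shift operators $\bigoplus_j T^{(j)}$ with $T^{(j)} \colon \mathcal{H}_j \to \mathcal{H}_{j+1}$ (indices mod $k$).

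For the lower bound, I apply \Cref{pr.starkcounting}. Writing $a_i = \sum_j T_i^{(j)} \in A_\gamma$, direct computation of $a_ia_i^*$ and $a_i^*a_i$ shows that normality of $a_i$ is equivalent to $T_i^{(j)}(T_i^{(j)})^* = (T_i^{(j+1)})^*T_i^{(j+1)}$ for every $j$; taking ranks, $\mathrm{rank}(T_i^{(j)})$ is independent of $j$, so this common value $r_i$ is bounded by $\min_j \#_j^v$. On the other hand, invertibility of $\sum_i a_ia_i^*$ restricted to $\mathcal{H}_{j+1}$, namely $\sum_i T_i^{(j)}(T_i^{(j)})^*$, forces the images of the $T_i^{(j)}$ to jointly span $\mathcal{H}_{j+1}$, so $\sum_i r_i \geq \#_{j+1}^v$ for every $j$, and hence $\sum_i r_i \geq \max_j \#_j^v$. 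Combining, $(n+1)\min_j \#_j^v \geq \max_j \#_j^v$, and integrality of $n+1$ yields $n+1 \geq \mathrm{quot}^v$. (This uniformly handles the non-free case $\min_j \#_j^v = 0$, in which $\wtrivdim{A}{\bZ/k} = \infty$.)

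For the upper bound, assume $\mathrm{quot}^v < \infty$, set $m := \min_j \#_j^v$ and $n+1 := \mathrm{quot}^v$, and construct witnesses explicitly. Since $(n+1)m \geq \max_j \#_j^v$, I can choose, for each $j \in \bZ/k$, linear isometries $U_i^{(j)} \colon \bC^m \to \mathcal{H}_j$, $0 \leq i \leq n$, whose images together span $\mathcal{H}_j$. Define $T_i^{(j)} := U_i^{(j+1)}(U_i^{(j)})^*$ and $a_i := \sum_j T_i^{(j)} \in A_\gamma$. Using $(U_i^{(j)})^*U_i^{(j)} = I_m$, both $T_i^{(j)}(T_i^{(j)})^*$ and $(T_i^{(j+1)})^*T_i^{(j+1)}$ equal the projection onto $\mathrm{im}(U_i^{(j+1)})$, so $a_i$ is normal; a telescoping product shows $a_i^k$ is a projection, giving $\sigma(a_i) \subseteq \stars{k}$; and $\sum_i a_ia_i^*$ acts on $\mathcal{H}_{j+1}$ as $\sum_i U_i^{(j+1)}(U_i^{(j+1)})^*$, invertible by the spanning choice. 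Thus $\wtrivdim{A}{\bZ/k} \leq n = \mathrm{quot}^v - 1$. The main obstacle is the lower bound, in particular the observation that normality rigidifies the rank of each block of $a_i$ to a common value $r_i \leq \min_j \#_j^v$; once that is in hand, the rest reduces to elementary linear algebra and an explicit isometric construction.
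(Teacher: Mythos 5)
Your proposal is correct and follows essentially the same route as the paper: reduce to single-sink components of the maximally out-split graph, identify the spectral subspace $A_\gamma$ with block-shift operators between the eigenspaces $\mathcal{H}_j$ graded by path length mod $k$, bound ranks from below by invertibility and from above by $\min_j\#^v_j$ for the lower bound, and build partial isometries from spanning families of $\min_j\#^v_j$-dimensional subspaces for the upper bound. Your normality argument ($T_i^{(j)}(T_i^{(j)})^*=(T_i^{(j+1)})^*T_i^{(j+1)}$ forcing a common block rank) in fact supplies a cleaner justification of the rank bound that the paper asserts more briefly.
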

\begin{proof} The algebra $A=C^*(E) \cong C^*(F)$ is a direct sum of matrix algebras indexed by the sinks of $E$. Moreover, $F$ has one connected component $F_i$ for each sink $v_i$, and the gauge $\bZ/k$-action on $C^*(F)=\bigoplus\limits_i C^*(F_i)$ is diagonal. Since the direct sum decomposition gives $\wtrivdim{C^*(F)}{\bZ/k} = \max_i \wtrivdim{C^*(F_i)}{\bZ/k}$, we will henceforth assume that $E$ has a single sink, denoted $v$.

  Let $n := |F^0|$, and let $p_1, \ldots, p_n$ be the vertex projections for $F$, where $p_i$ is associated to the vertex $v_i$. We have that $A\cong M_n$, and the $p_i$ form a complete set of mutually orthogonal minimal projections. If $\mu_i$ denotes the unique path from $v_i$ to the sink $v$ for each $1 \leq i \leq n$, then the gauge action of $\bZ/k$ is implemented via conjugation by the unitary $u\in A$ whose action on the range of $p_i$ is multiplication by $\omega^{|\mu_i|}$, where $\omega=\mathrm{exp}(2\pi i /k)$. To see this, let $e_1, \ldots, e_n$ form a basis for the vector space $\bC^n$ upon which $M_n$ acts, with the following requirement. If an edge $f$ goes from vertex $v_j$ to vertex $v_k$, then $S_f$ is defined by $e_j \mapsto e_k$. Note that in this case, $|\mu_j| = |\mu_k| + 1$, so conjugation by the unitary $u$ defined above scales $S_f$ by $\omega$. This means that conjugation by $u$ agrees with the gauge action on the edge partial isometries. Since $M_n$ is generated as a $C^*$-algebra by these partial isometries, the conclusion follows.

  The weak local-triviality dimension is the minimal $d$ for which we can find normal contractions $x_0, \ldots, x_d$ such that $\sum\limits_{j=0}^d x_jx_j^*$ is invertible and such that for each $j \in \{0, \ldots, d\}$, $\sigma(x_j) \subseteq \stars{k}$ and $ux_ju^*=\omega x_j$. The last condition means that $x_j$ maps the $\omega^i$ eigenspace of $u$ into the $\omega^{i+1}$ eigenspace.

  {\bf (I) The weak local-triviality dimension is at least $\mathrm{quot}^v-1$.} Each $x_j x^*_j$ leaves the eigenspaces of $u$ invariant, and each $x_jx_j^*$ has rank bounded above by $\min_i \#^v_i$. For any $0 \leq j \leq d$, let $R_j$ denote the range of the restriction of $x_j x_j^*$ to the largest eigenspace of $u$, noting that this eigenspace has dimension $\max_i \#^v_i$. The ranges $R_0, \ldots, R_d$ must span the largest eigenspace of $u$, so
  \begin{equation*}
    \max_i \#^v_i \le \sum_{j=0}^d\dim R_j \le (d+1)\min_i \#^v_i, 
  \end{equation*}
hence the desired inequality $d+1\ge \mathrm{quot}^v$ also holds.

{\bf (II) The weak local-triviality dimension is at most $\mathrm{quot}^v-1$.} We may assume $\mathrm{quot}^v$ is finite. Let $i \in \{0, \ldots, k - 1\}$ range over the residues modulo $k$, and define $d := \mathrm{quot}^v - 1$. Because the $\omega^i$ eigenspace of $u$ has dimension at most
\begin{equation*}
  (d+1)\cdot(\text{smallest dimension of an eigenspace for }u),
\end{equation*}
we can find subspaces $V_{i, j}$ for each $0 \leq i \leq k$ and $0 \leq j \leq d$ satisfying the following conditions:
\begin{equation*}
  V_{i,j}\subseteq (\omega^i\text{ eigenspace of }u),
\end{equation*}
each $V_{i,j}$ has dimension $\min_i \#^v_i$ (the smallest dimension of an eigenspace for $u$), and for each $0 \leq i \leq k -1$,
\begin{equation*}
  \sum_j V_{i,j} = \omega^i\text{ eigenspace of }u. 
\end{equation*}
Now, define $x_j$ to be a partial isometry that maps $V_{i,j}$ onto $V_{i+1,j}$ isometrically, with $i$ regarded modulo $k$ (so that $k=0$), and such that $x_j^k$ is the identity on each $V_{i,j}$. We leave it to the reader to check that the following conditions are met. Each $\sigma(x_j)$ is contained in $\{0\} \cup \{\omega^i\ |\ 0\le i\le k-1\}$, each $x_j$ is normal,  $\sum_{j=0}^d x_jx_j^*$ is invertible, and $ux_ju^*=\omega x_j$ (since $x_j$ maps the $\omega^i$ eigenspace of $u$ to the $\omega^{i+1}$ eigenspace). We therefore have $\wtrivdim{C^*(E)}{\bZ/k} \leq d = \mathrm{quot}^v - 1$.
\end{proof}

Now, \cite[Proposition 4.3]{chr} shows the gauge $\bZ/k$-action is free if and only if each sink receives a path of length $k - 1$. In this case, the denominator of $\Cref{eq:quot}$ is never zero, so we reach the following.

\begin{corollary}\label{cor.fin-dim-free}
  For a finite acyclic graph $E$ and a positive integer $k$, the following are equivalent.
  \begin{enumerate}
    \renewcommand{\labelenumi}{(\alph{enumi})}
  \item\label{item:finite_acyclic_free} The gauge action of $\bZ/k$ on $C^*(E)$ is free.
  \item $\wtrivdim{C^*(E)}{\bZ/k}$ is finite.   
  \item\label{item:finite_acyclic_counting} For each sink $v$, there exists a path terminating at $v$ of length $k - 1$.
  \end{enumerate}
\end{corollary}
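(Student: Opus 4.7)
The plan is to combine \Cref{th.fin-acyclic-wdim} with \cite[Proposition 4.3]{chr}, which has already been quoted in the paragraph immediately preceding the statement, together with one short combinatorial observation. The equivalence of (a) and (c) is precisely \cite[Proposition 4.3]{chr}, so the only remaining task is to tie (b) into the picture.

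By \Cref{th.fin-acyclic-wdim}, $\wtrivdim{C^*(E)}{\bZ/k} = \max_{v}(\mathrm{quot}^v - 1)$ with $v$ ranging over the sinks of $E$, and this maximum is finite if and only if each individual $\mathrm{quot}^v$ is finite. By the definition in \eqref{eq:quot}, finiteness of $\mathrm{quot}^v$ is equivalent to $\min_j \#^v_j \geq 1$ for every residue $j \in \{0, 1, \ldots, k-1\}$. So (b) translates into the statement that each sink $v$ admits, for every residue class $j$, at least one path of length $\equiv j \pmod k$ terminating at $v$.

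I would then verify combinatorially that this ``every residue represented'' condition at a fixed sink $v$ is equivalent to (c). One direction uses only the residue $j = k-1$: a path terminating at $v$ of length $\ell \equiv k-1 \pmod k$ with $\ell \geq 0$ automatically has $\ell \geq k-1$, and its final $k-1$ edges form a length-$(k-1)$ path ending at $v$. Conversely, given such a path $e_1 \cdots e_{k-1}$ with $r(e_{k-1}) = v$, the suffixes $e_i \cdots e_{k-1}$ for $i = 1, \ldots, k-1$ together with the length-zero path $v$ realize every length $0, 1, \ldots, k-1$, hence every residue class. No step presents a real obstacle; the result is essentially a repackaging of \Cref{th.fin-acyclic-wdim} with this suffix argument. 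An alternative route for (b) $\Rightarrow$ (a) would invoke \cite[Theorem 3.8]{hajacindex} (finiteness of the weak local-triviality dimension already implies freeness), bypassing one direction of the combinatorial step, but the direct argument through \Cref{th.fin-acyclic-wdim} is clean enough to record on its own.
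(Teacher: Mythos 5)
Your proof is correct and follows essentially the same route as the paper: the paper likewise cites \cite[Proposition 4.3]{chr} for (a) $\Leftrightarrow$ (c) and observes that (c) is exactly the condition making the denominator in \Cref{eq:quot} nonzero, so that \Cref{th.fin-acyclic-wdim} yields finiteness. Your suffix argument for (b) $\Rightarrow$ (c) merely spells out a step the paper leaves implicit (or replaces by the general fact that finite weak local-triviality dimension implies freeness), so there is nothing substantive to add.
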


Note that if $E$ is finite and acyclic, then the full gauge action on $E$ cannot be free, as otherwise the (free) $\bZ/k$ restrictions would show there are paths of arbitrary size in $E$, a contradiction. We can also use \Cref{cor.fin-dim-free} to show that the inequality in  \Cref{le.tens-dim} can be strict.

\begin{example}\label{ex.strict}
  Let $n$ and $k$ be positive integers with $2\le n<k$, and realize the matrix algebra $M_n$ as $C^*(E)$ for the path $E$ of length $n-1$ (i.e., a path of $n$ vertices and $n-1$ edges). According to \Cref{cor.fin-dim-free}, the gauge $\bZ/k$-action on $M_n$ has infinite weak local-triviality dimension. 

On the other hand, the diagonal $\bZ/k$-action on $M_n \otimes M_n$ can be identified with the gauge action on the graph algebra $C^*(F)$, where $F$ is the graph on $n^2$ vertices consisting of $n$ paths whose endpoints themselves constitute a path. In total, this graph has $n^2 - 1$ edges. 

\vspace{.1 in}

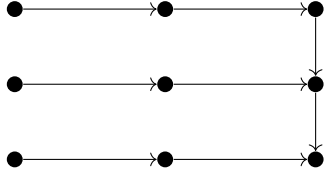
\begin{figure}[h!]
\centering
\begin{tikzpicture}
\tikzstyle{vertex}=[circle,fill=black,minimum size=6pt,inner sep=0pt]
\tikzstyle{edge}=[draw,->]
% vertices
\node[vertex] (a) at  (0,0) {};
\node[vertex] (b) at  (2,0) {};
\node[vertex] (c) at  (4,0) {};
\node[vertex] (d) at  (0, 1) {};
\node[vertex] (e) at  (2, 1) {};
\node[vertex] (f) at  (4, 1) {};
\node[vertex] (g) at  (0, 2) {};
\node[vertex] (h) at  (2, 2) {};
\node[vertex] (i) at  (4, 2) {};
%edges
\path (a) edge [edge] node {} (b);
\path (b) edge [edge] node {} (c);
\path (d) edge [edge] node {} (e);
\path (e) edge [edge] node {} (f);
\path (g) edge [edge] node {} (h);
\path (h) edge [edge] node {} (i);
\path (i) edge [edge] node {} (f);
\path (f) edge [edge] node {} (c);
\end{tikzpicture}
\caption{Graph representation of $M_3 \otimes M_3$.}
\end{figure}

There is once more a single sink, and the largest path it receives has $2n - 2$ edges. Therefore, if
  \begin{equation*}
    n<k\le 2n-1,
  \end{equation*}
  we have
  \begin{equation*}
    \wtrivdim{M_n\otimes M_n}{\bZ/k}<\infty =  \wtrivdim{M_n}{\bZ/k},
  \end{equation*}
  showing that indeed the inequality in \Cref{le.tens-dim} can be strict. 

In fact, this gives non-free $\bZ/k$-actions whose tensor products are free, even when $k\ge 3$ is prime. This is a noncommutative phenomenon, as if a prime-order group acts on spaces $X$ and $Y$ such that the diagonal action on $X\times Y$ is free, then one of the original actions must be free. Incidentally, if $\bZ/2$ acts on $C^*$-algebras $A$ and $B$, and the diagonal action on any one tensor product $A \otimes B$ is free, then the action on $A$ or $B$ is free by a direct spectral subspace argument. 
\end{example}

We now turn to the plain and strong local-triviality dimensions for finite acyclic graphs. In this case, the situation is drastically different -- there are no intermediate values between $0$ and $\infty$.

\begin{theorem}\label{th.fin-acyclic-dim}
  For a finite acyclic graph $E$, the following are equivalent. 
\begin{enumerate}
    \renewcommand{\labelenumi}{(\alph{enumi})}
\item $\strivdim{C^*(E)}{\bZ/k}$ is finite.
\item $\trivdim{C^*(E)}{\bZ/k}$ is finite.
\item For each sink $v$, $\#_i^v$ is independent of $i$.
\item $\trivdim{C^*(E)}{\bZ/k} = 0$.
\end{enumerate}
\end{theorem}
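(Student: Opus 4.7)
I would close the implications cyclically as (d) $\Rightarrow$ (a) $\Rightarrow$ (b) $\Rightarrow$ (c) $\Rightarrow$ (d). The first two steps are immediate: (d) $\Rightarrow$ (a) follows from \Cref{re.w0}, which yields $\strivdim{C^*(E)}{\bZ/k} = 0 < \infty$ whenever $\trivdim{C^*(E)}{\bZ/k} = 0$, and (a) $\Rightarrow$ (b) is the comparison \eqref{eq:thecomparison}.

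For (b) $\Rightarrow$ (c), I first reduce to the single-sink case. As in the proof of \Cref{th.fin-acyclic-wdim}, the out-split $F$ of $E$ satisfies $C^*(E) \cong C^*(F) = \bigoplus_v C^*(F_v)$ as $\bZ/k$-algebras, with $v$ ranging over the sinks of $E$, and composing any family $\rho_0, \ldots, \rho_d$ witnessing $\trivdim{C^*(F)}{\bZ/k} \le d$ with the projection onto a summand gives such a family for each $C^*(F_v)$. Assuming then that $E$ has a unique sink $v$, we have $A \cong M_n$ with $n = |F^0|$, and the gauge action is implemented by conjugation by a unitary $u$ whose $\omega^j$-eigenspace $V_j$ has dimension $\#^v_j$ (where $\omega = e^{2 \pi i / k}$). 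By \Cref{pr.starkcounting}, there exist normal $a_0, \ldots, a_d \in A_\gamma$ with $\sigma(a_i) \subseteq \stars{k}$ and $\sum_i a_i a_i^* = 1$. Decomposing each $a_i$ into blocks $a_i^{(j)} : V_j \to V_{j+1}$, normality $a_i a_i^* = a_i^* a_i$ restricts on $V_{j+1}$ to $a_i^{(j)}(a_i^{(j)})^* = (a_i^{(j+1)})^* a_i^{(j+1)}$, and taking traces yields $\|a_i^{(j)}\|_{HS}^2 = \|a_i^{(j+1)}\|_{HS}^2$ for each $j$. Applying the trace on $V_j$ in $\sum_i a_i a_i^* = 1$ then gives $\dim V_j = \sum_i \|a_i^{(j-1)}\|_{HS}^2$, a sum of terms independent of $j$, so $\#^v_j = \dim V_j$ is independent of $j$.

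For (c) $\Rightarrow$ (d), again reduce to a single sink, so that all $V_j$ share a common dimension. Choose unitaries $w_j : V_j \to V_{j+1}$ (indices modulo $k$) with $w_{k-1} \cdots w_0 = \mathrm{id}_{V_0}$, and let $a$ be the operator with blocks $w_j$. Then $a$ is unitary with $a^k = 1$, lies in $A_\gamma$, has $\sigma(a) \subseteq \{\omega^j : 0 \le j < k\} \subseteq \stars{k}$, and satisfies $aa^* = 1$. By \Cref{pr.starkcounting}, this witnesses $\trivdim{C^*(F_v)}{\bZ/k} = 0$; direct-summing such elements across the sinks gives a single witness showing $\trivdim{C^*(E)}{\bZ/k} = 0$.

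The main obstacle is the Hilbert--Schmidt computation in (b) $\Rightarrow$ (c). The key observation is that normality together with the spectral-subspace condition forces each block $a_i^{(j)}$ of a normal element $a_i \in A_\gamma$ to have a Hilbert--Schmidt norm independent of $j$, which then makes the equality $\sum_i a_i a_i^* = 1$ equidistribute the dimensions $\#^v_j$. By contrast, the weak-dimension analysis in \Cref{th.fin-acyclic-wdim} only needed crude rank bounds compatible with mere invertibility, whereas the equality $\sum_i a_i a_i^* = 1$ forces the sharp $0/\infty$ dichotomy here.
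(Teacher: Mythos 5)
Your proposal is correct and follows essentially the same route as the paper: the same reduction to a single sink via maximal out-splitting, and the same trace argument for (b) $\Rightarrow$ (c) (the paper applies $\mathrm{tr}(xy)=\mathrm{tr}(yx)$ blockwise to both $\sum x_jx_j^*=1$ and $\sum x_j^*x_j=1$, which is just a reorganization of your Hilbert--Schmidt computation). The only cosmetic difference is in (c) $\Rightarrow$ (d), where the paper simply invokes \Cref{th.fin-acyclic-wdim} together with \Cref{re.w0} instead of writing out the unitary $a$ with $a^k=1$ explicitly; your construction is exactly the one already embedded in the proof of that theorem.
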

\begin{proof}
Since $\trivdim{C^*(E)}{\bZ/k} \leq \strivdim{C^*(E)}{\bZ/k}$, $(a) \Rightarrow (b)$ is trivial. Similarly, $(d) \Rightarrow (a)$ follows easily from \Cref{re.w0}. For the other two implications, we first maximally out-split $E$ to obtain a graph $F$, whose components $F_i$ are indexed by the sinks of $E$, such that
$\trivdim{C^*(F)}{\bZ/k} = \max_i \trivdim{C^*(F_i)}{\bZ/k}$. We may therefore assume $E$ is maximally out-split with one sink, $v$. 

{\bf (c) \hspace{.001 in} $\Rightarrow$ (d):} Assume that the $\#^v_i$ are equal for each $0\le i\le k-1$. By \Cref{th.fin-acyclic-wdim}, we have that $\wtrivdim{C^*(E)}{\bZ/k}$ is zero. \Cref{re.w0} then implies that all of the local-triviality dimensions are zero.

  {\bf (b) $\Rightarrow$ (c):} Suppose the plain local-triviality dimension, $d$, is finite, so there exist normal contractions $x_j$ for all $0 \leq j \leq d$ such that $\sigma(x_j) \subseteq \stars{k}$, $ux_ju^*=\omega x_j$, and $\sum\limits_{j=0}^d x_jx_j^*=1$. Let $u$ be a unitary implementing the $\bZ/k$-action, as in the proof of \Cref{th.fin-acyclic-wdim}.

  The claim we need to prove is that the eigenspaces of $u$ all have the same dimension, since for each $0 \leq i \leq k -1$,
  \begin{equation*}
    \dim\ker(u-\omega^i) = \#^v_i. 
  \end{equation*}
  As observed before, each $x_j$ maps $V_i=\ker(u-\omega^i)$ to $\ker(u-\omega^{i+1})=V_{i+1}$, and hence each $x_j x_j^*$ leaves the spaces $V_i$ invariant. Regarding $x_j$ as an operator $V_i\to V_{i+1}$, the traces of $x_j x^*_j\in \mathrm{End}(V_{i+1})$ and $x^*_j x_j\in \mathrm{End}(V_i)$
are equal. Summing over $j$ and using $\sum x_j x^*_j = \sum x^*_j x_j=1$,
we obtain
\begin{equation*}
  \dim(V_i) = \mathrm{tr}(\mathrm{id}_{V_i}) = \mathrm{tr}(\mathrm{id}_{V_{i+1}}) = \dim(V_{i+1}).
\end{equation*}
This is valid for all $i\in \bZ/k$, hence the $V_i$ are equidimensional, and $\#^v_i$ is independent of $i$.
\end{proof}

 The simplest example of a finite acyclic graph $E$ is an $n$-vertex path, and in this case $C^*(E)$ is isomorphic to the matrix algebra $M_n$.

\vspace{.1 in}

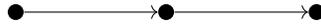
\begin{figure}[h!]
\begin{center}
\begin{tikzpicture}[auto,swap]
\tikzstyle{vertex}=[circle,fill=black,minimum size=6pt,inner sep=0pt]
\tikzstyle{edge}=[draw,->]

    \node[vertex] (1) at (0,0) {};
    \node[vertex] (2) at (2,0) {};
    \node[vertex] (3) at (4,0) {};
    \path (1) edge [edge] node {} (2);
    \path (2) edge [edge] node {} (3);
\end{tikzpicture}
\end{center}
\caption{Graph representation of $M_3$.}
\end{figure}

\noindent The gauge $\bZ/k$-action is then conjugation by 

\begin{equation}\label{eq.maddog}
  D := \mathrm{diag}(1,\omega,\ldots,\omega^{n-1}),
\end{equation}
where $\omega = \text{exp}(2 \pi i / k)$. 

\begin{proposition}\label{prop.mat}
  Let $M_n$ be realized as the graph $C^*$-algebra of an $n$-vertex path. We have
  \begin{equation*}
    \strivdim{M_n}{\bZ/k} = \trivdim{M_n}{\bZ/k}=
    \begin{cases}
      0,\ & k \, | \, n\\
      \infty,\ &k \nmid n
    \end{cases}.
\end{equation*}
\end{proposition}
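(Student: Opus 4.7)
The plan is to apply \Cref{th.fin-acyclic-dim} directly. The $n$-vertex path $E$ is a finite acyclic graph with a unique sink $v = v_n$, so by that theorem, both $\trivdim{M_n}{\bZ/k}$ and $\strivdim{M_n}{\bZ/k}$ take the value either $0$ or $\infty$, and they vanish precisely when $\#_i^v$ is independent of $i \in \{0, \ldots, k-1\}$. The entire proof therefore reduces to computing these counts for the path graph.

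First I would observe that each non-sink vertex of $E$ has a unique outgoing edge, so $E$ is already maximally out-split and we may take $F = E$ in the construction preceding \Cref{th.fin-acyclic-wdim}. The directed paths in $E$ ending at $v_n$ are in natural bijection with $\{v_1, \ldots, v_n\}$: for each $i$, the unique such path starts at $v_i$ and has length $n - i$. Consequently,
\[
\#_i^v \;=\; \#\bigl\{m \in \{0, 1, \ldots, n-1\} : m \equiv i \pmod k\bigr\}.
\]

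Next I would carry out the elementary residue count. Writing $n = qk + r$ with $0 \le r < k$, this count equals $q+1$ for $0 \le i < r$ and $q$ for $r \le i < k$, so it is independent of $i$ exactly when $r = 0$, i.e., when $k \mid n$. Feeding this into \Cref{th.fin-acyclic-dim} yields the two-valued formula stated in the proposition. There is no real obstacle here, since the substantive work is packaged in \Cref{th.fin-acyclic-dim} and the residue count is immediate; the only thing to be careful about is verifying that the out-splitting step is trivial for a path, which it is because every non-sink vertex already has a single outgoing edge.
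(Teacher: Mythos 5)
Your proposal is correct and follows the same route as the paper: the paper's proof is exactly the observation that the $n$-vertex path has a single sink and that the $\#^v_i$ are all equal if and only if $k \mid n$, which is then fed into \Cref{th.fin-acyclic-dim}. Your version merely spells out the residue count and the (trivially satisfied) out-splitting step in more detail.
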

\begin{proof}
The $n$-vertex path has a single sink $v$, and the $\#^v_i$ are all equal if and only if $k$ divides $n$. 
\end{proof}

\begin{remark}
\Cref{prop.mat} also shows that there are free actions by finite cyclic groups which nonetheless have infinite local-triviality dimension. The above action of $\bZ/k$ on $M_n$ is free if and only if $k \leq n$.
\end{remark}

\Cref{prop.mat} may be proved by simple dimension-counting methods, since elements of the $\omega^m$ spectral subspace of $C^*(E) = M_n$ are matrices $N$ such that $DND^{-1} = \omega^m N$. These direct methods are in fact built into the proof of the more general result \Cref{th.fin-acyclic-dim}. We will need specific reference to the local-triviality dimension of $M_n$ in subsection \ref{subse.cyc}, which concerns the graph of an $n$-cycle.

%%%%%%%%%%%%%%%%%%%%%%%%%%%%%%%%%%%%%%%%%%%%%%%%%%%%%%%%%%%%%%%%%%%%%%%%%%%%%%%%%%%%%%
%%%%%%%%%%%%%%%%%%%%%%%%%%%%%%%%%%%%%%%%%%%%%%%%%%%%%%%%%%%%%%%%%%%%%%%%%%%%%%%%%%%%%%
\section{Various examples}\label{se.ex}

%%%%%%%%%%%%%%%%%%%%%%%%%%%%%%%%%%%%%%%%%%%%%%%%%%%%%%%%%%%%%%%%%%%%%%%%%%%%%%%%%%%%%%
\subsection{Cuntz algebras}\label{subse.ctz}

The Cuntz algebra $\mathcal{O}_n$ is given by the presentation $C^*(S_1, \ldots, S_n \, | \, \sum S_i S_i^* = 1,   \,\, \forall i \,\, S_i ^* S_i = 1)$, which corresponds to a graph with a single vertex and $n$ loops.

\vspace{.15 cm}

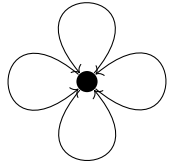
\begin{figure}[h!]
\begin{center}
\begin{tikzpicture}[auto,swap]
\tikzstyle{vertex}=[circle,fill=black,minimum size=8pt,inner sep=0pt]
\tikzstyle{edge}=[draw,->]
\tikzstyle{loop1}=[draw,->,in=130,out=50,looseness=22]
\tikzstyle{loop2}=[draw,->,in=220,out=140,looseness=22]
\tikzstyle{loop3}=[draw,->,in=310,out=230,looseness=22]
\tikzstyle{loop4}=[draw,->,in=40,out=320,looseness=22]
    \node[vertex] (1) at (0,0) {};
    \path (1) edge [loop1] node {} (1);
    \path (1) edge [loop2] node {} (1);
    \path (1) edge [loop3] node {} (1);
    \path (1) edge [loop4] node {} (1);
\end{tikzpicture}
\end{center}
\caption{Graph representation of $\mathcal{O}_4$.}
\end{figure}

\vspace{.3 cm}

We will regard the (free) gauge action $\alpha$ on $\mathcal{O}_n$ as a $\bZ$-grading, with degree $d$ component
\begin{equation*}
  \mathcal{O}_n(d):=\{x\in \mathcal{O}_n\ |\ \alpha_z(x) = z^dx,\ \forall z\in \bS^1\}. 
\end{equation*}
The $C^*$-subalgebra $F=\mathcal{O}_n(0)\subset \mathcal{O}_n$ is the UHF algebra of type $n^\infty$, the direct limit of inclusions
\begin{equation*}
  M_{n^k} \hookrightarrow M_{n^{k+1}},
\end{equation*}
where each inclusion is diagonal and unital (see e.g. \cite[$\S$1]{ctz-ctz}). As in \Cref{pr.normaldegcounting}, the local-triviality dimensions are determined by the existence of normal degree $1$ elements satisfying sum conditions. However, these elements do not exist.

\begin{proposition}\label{pr.ctz-inf}
For $n \geq 2$, $\mathcal{O}_n$ admits no normal, degree one elements other than $0$. Consequently, the gauge action of $\bS^1$ on $\mathcal{O}_n$ has infinite weak local-triviality dimension.
\end{proposition}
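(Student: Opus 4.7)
The plan is to exploit the contrast between $\mathcal{O}_n$, which admits no tracial states, and its UHF core $F=\mathcal{O}_n(0)$, which does. For any $x \in \mathcal{O}_n(1)$, both $x^*x$ and $xx^*$ lie in $F$, so the unique tracial state $\mathrm{tr}$ on $F \cong M_{n^\infty}$ can be applied to them. I will show that $\mathrm{tr}(xx^*)=\tfrac{1}{n}\mathrm{tr}(x^*x)$, so normality forces $\mathrm{tr}(x^*x)=0$, hence $x=0$ by faithfulness of $\mathrm{tr}$.

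First I would observe the exact (not merely approximate) decomposition: since $\sum_i S_iS_i^*=1$ and $S_i^*x\in \mathcal{O}_n(0)=F$ whenever $x\in\mathcal{O}_n(1)$, every degree-one element can be written as
\begin{equation*}
  x = \sum_{i=1}^n S_i b_i, \qquad b_i := S_i^*x \in F.
\end{equation*}
Using the Cuntz relations $S_i^*S_j=\delta_{ij}$, this gives
\begin{equation*}
  x^*x = \sum_i b_i^*b_i \in F, \qquad xx^* = \sum_{i,j} S_i b_ib_j^* S_j^* \in F.
\end{equation*}

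Next I would establish the key identity: for every $c\in F$,
\begin{equation*}
  \mathrm{tr}(S_icS_j^*) = \frac{\delta_{ij}}{n}\,\mathrm{tr}(c).
\end{equation*}
It is enough to verify this on the standard matrix-unit generators $c=S_\mu S_\nu^*$ with $|\mu|=|\nu|$, for which the UHF trace satisfies $\mathrm{tr}(S_\mu S_\nu^*)=\delta_{\mu,\nu}/n^{|\mu|}$; a direct computation of $\mathrm{tr}(S_{i\mu}S_{j\nu}^*)=\delta_{ij}\delta_{\mu\nu}/n^{1+|\mu|}$ yields the desired factor of $1/n$. Density and continuity extend the identity to all $c\in F$. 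Substituting, we find
\begin{equation*}
  \mathrm{tr}(xx^*) = \sum_i \frac{\mathrm{tr}(b_ib_i^*)}{n} = \frac{1}{n}\,\mathrm{tr}\!\left(\sum_i b_i^*b_i\right) = \frac{1}{n}\,\mathrm{tr}(x^*x),
\end{equation*}
and if $x$ is normal, then $(1-1/n)\mathrm{tr}(x^*x)=0$. Since $n\ge 2$ and $\mathrm{tr}$ is faithful on $F$, this gives $x^*x=0$, hence $x=0$.

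The consequence for the dimension is then immediate from \Cref{pr.normaldegcounting}: any finite family of normal elements $a_0,\ldots,a_d\in\mathcal{O}_n(1)$ must all be zero, so $\sum_{i=0}^d a_ia_i^*=0$ is never invertible, forcing $\wtrivdim{\mathcal{O}_n}{\bS^1}=\infty$. The only conceptual obstacle is psychological: one would like a trace on $\mathcal{O}_n$ but none exists. The resolution is to notice that one does not need a genuine trace on the whole algebra — only that the particular elements $x^*x$ and $xx^*$ belong to the tracial subalgebra $F$, with the factor $1/n$ emerging naturally from the defining relation $\sum_iS_iS_i^*=1$.
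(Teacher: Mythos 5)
Your proof is correct, and it reaches the conclusion by the same underlying mechanism as the paper --- the canonical trace $\tau$ on the UHF core $F=\mathcal{O}_n(0)$ together with the factor $1/n$ by which the Cuntz isometries scale it --- but via a genuinely different decomposition. The paper writes a degree-one element as $x=yS_1$ with $y\in F$, citing Cuntz's structural description of the spectral subspaces, uses normality to derive $S_1yy^*S_1^*=y^*y$, and concludes from the fact that $S_1(\cdot)S_1^*$ is a non-unital endomorphism of $F$ scaling $\tau$ by $\tfrac1n$. You instead expand $x=\sum_i S_ib_i$ with $b_i=S_i^*x$, which follows immediately from $\sum_i S_iS_i^*=1$ and needs no external structural input, and you replace the endomorphism argument by the identity $\tau(S_icS_j^*)=\delta_{ij}\,\tau(c)/n$, verified on the matrix units $S_\mu S_\nu^*$ and extended by density and continuity. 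Your route is more self-contained (no appeal to $\mathcal{O}_n(1)=FS_1$) at the cost of that small density check; both arguments terminate in $\tau(x^*x)=\tfrac1n\tau(x^*x)$ and faithfulness of $\tau$, and your deduction of $\wtrivdim{\mathcal{O}_n}{\bS^1}=\infty$ from \Cref{pr.normaldegcounting} is exactly the paper's.
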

\begin{proof}
  Let $x\in \mathcal{O}_n(1)$ be a normal element, and let $S=S_1$ be one of the $n$ generating isometries. It follows from \cite[$\S$1.6]{ctz-ctz} that $x$ is of the form $yS$, where $y \in F$. Since right multiplication by $S$ annihilates the orthogonal complement of $SS^*$, we may assume that $y=ySS^*$. We have 
  \begin{equation}\label{eq:ys}
    yy^*=ySS^*y^*=xx^*=x^*x=S^*y^*yS,
  \end{equation}
so multiplying on the left and right by $S$ and $S^*$, respectively, and using $ySS^*=y$ shows that
  \begin{equation}\label{eq:as}
    Syy^*S^*=y^*y. 
  \end{equation}
If $\tau$ is the canonical trace on $F$, $S (\cdot) S^*$ is a (non-unital) endomorphism of $F$ that scales $\tau$ by $\frac 1n$, so
  \begin{equation*}
    \tau(y^*y) = \tau(Syy^*S^*) = \frac 1n \, \tau(yy^*) = \frac 1n \, \tau(y^*y),
  \end{equation*}
  and hence $y^*y$ is annihilated by $\tau$. Since $\tau$ is faithful, $y=0$, and hence $x=0$.
\end{proof}

We now consider the restriction of the gauge action to finite cyclic subgroups $\bZ/k$, beginning with $k = 2$.

\begin{proposition}
For $n \geq 2$, $\trivdim{\mathcal{O}_n}{\bZ/2} \leq 2n-1$.
\end{proposition}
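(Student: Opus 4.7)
The plan is to apply \Cref{le.rowcolsumZ2} directly: $\mathcal{O}_n$ is the graph $C^*$-algebra of the finite graph $E$ with a single non-sink vertex $v$ and $n$ loops at $v$. The adjacency matrix is the $1\times 1$ matrix $A_E = (n)$, and taking $X = \{v\}$ with coefficient $\chi_v = 1/(n+1)$ expresses the row vector $(1)$ as a nonnegative combination of the single nonzero row of $I + A_E = (n+1)$. Since $\max_{w \in E^0} \sum_{v \in X}(A_E)_{vw} = (A_E)_{vv} = n$, \Cref{le.rowcolsumZ2} then yields $\trivdim{\mathcal{O}_n}{\bZ/2} \leq 2n - 1$.

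Unpacking the lemma in this case produces a short self-contained argument via \Cref{pr.starkcounting}. Set $B_i := \sqrt{2/(n+1)}\, S_i$ for $1 \leq i \leq n$; each $B_i$ lies in the $(-1)$ spectral subspace of the gauge $\bZ/2$-action. The Cuntz relations $\sum_i S_iS_i^* = 1$ and $S_i^*S_i = 1$ give $\sum_{i=1}^n (B_iB_i^* + B_i^*B_i) = 2$, whence $\sum_{i=1}^n (\mathrm{Re}(B_i)^2 + \mathrm{Im}(B_i)^2) = 1$. The $2n$ elements $\mathrm{Re}(B_i), \mathrm{Im}(B_i)$ are self-adjoint contractions (hence normal with spectrum in $\stars{2}$) in the $(-1)$ spectral subspace, so \Cref{pr.starkcounting} delivers the bound.

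There is essentially no obstacle: the entire content is the verification of the row-sum hypothesis of \Cref{le.rowcolsumZ2}, which is trivial for the single-vertex, $n$-loop graph. All genuine work has been done upstream in \Cref{le.rowcolsumZ2} and \Cref{pr.starkcounting}, so the proof can be expressed as a one-line citation or, if the authors prefer transparency, as the two-line explicit computation above.
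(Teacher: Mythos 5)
Your proposal is correct and follows essentially the same route as the paper: verify the row-sum hypothesis of \Cref{le.rowcolsumZ2} for the single-vertex $n$-loop graph with $\chi_v = 1/(n+1)$, then exhibit the $2n$ self-adjoint elements $\mathrm{Re}(B_i), \mathrm{Im}(B_i)$ with $B_i = \sqrt{2/(n+1)}\,S_i$ and invoke \Cref{pr.starkcounting}. Your indexing $1 \leq i \leq n$ is in fact cleaner than the paper's (which writes $0 \leq i \leq n$ despite there being only $n$ generators), so nothing further is needed.
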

\begin{proof}
The adjacency matrix is $[n]$, which meets the conditions of \Cref{le.rowcolsumZ2}. The formula therein reduces to examining the elements $x_i = \sqrt{\cfrac{2}{(n+1)}} \,\, \text{Re}(S_i)$ and $y_i = \sqrt{\cfrac{2}{(n+1)}} \,\, \text{Im}(S_i)$ for $0 \leq i \leq n$, which have $\sum\limits_{i=0}^n x_i^2 + y_i^2 = \cfrac{1}{n+1} \sum\limits_{i=0}^n (S_i^* S_i + S_i S_i^*) = \cfrac{1}{n + 1} \, (n + 1) = 1$. Therefore, $\trivdim{\mathcal{O}_n}{\bZ/2} \leq 2n - 1$.
\end{proof}

On the other hand, the gauge $\bZ/k$-action can never have local-triviality dimension zero.

\begin{theorem}\label{th.cuntz-ge1}
  For $n\ge 2$ and $k\ge 2$, $\wtrivdim{\mathcal{O}_n}{\bZ/k}\ge 1$.
\end{theorem}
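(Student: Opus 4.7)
The plan is to argue by contradiction. Assume $\wtrivdim{\mathcal{O}_n}{\bZ/k}=0$. By \Cref{re.w0} all three local-triviality dimensions vanish, and then \Cref{pr.starkcounting} produces a normal element $a\in\mathcal{O}_n$ in the $\omega$-spectral subspace for the $\bZ/k$-action (where $\omega=e^{2\pi i/k}$) with $\sigma(a)\subseteq\stars{k}$ and $aa^*=1$. Normality together with $aa^*=1$ forces $a$ to be a unitary, and the spectral condition forces $a^k=1$; equivalently, the spectral projections $p_0,\ldots,p_{k-1}$ of $a$ are $k$ mutually orthogonal projections in $\mathcal{O}_n$ summing to $1$ that the gauge $\bZ/k$-action cyclically permutes.

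To derive a contradiction, I plan to apply the unique KMS state $\tau$ on $\mathcal{O}_n$ at inverse temperature $\log n$, exactly as in the proof of \Cref{pr.ctz-inf}: $\tau$ is gauge invariant, restricts to the canonical trace on the UHF core $F=\mathcal{O}_n(0)$, and satisfies $\tau(vv^*)=n^{-d}\tau(v^*v)$ for all $v\in\mathcal{O}_n(d)$. Decomposing $a=\sum_d a_d$ via its $\bS^1$-Fourier series (the index $d$ running over $1+k\bZ\subseteq\bZ\setminus\{0\}$), the relations $aa^*=a^*a=1$ combined with the gauge invariance of $\tau$ (which kills all nonzero $\bS^1$-degree components) give $\sum_d \tau(a_d^* a_d)=1$ and $\sum_d n^{-d}\tau(a_d^* a_d)=1$; subtracting yields
\begin{equation*}
\sum_{d\equiv 1\,(k)} (1-n^{-d})\,\tau(a_d^* a_d)=0,
\end{equation*}
where $1-n^{-d}>0$ for $d>0$ and $1-n^{-d}<0$ for $d<0$.

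Next I would combine this sign-cancellation identity with the relation $a^k=1$, whose degree-zero Fourier component reads $\sum_{d_1+\cdots+d_k=0}a_{d_1}\cdots a_{d_k}=1$. Applying $\tau$ and using KMS cyclicity $\tau(uv)=n^{-d}\tau(vu)$ for $u\in\mathcal{O}_n(d)$, along with Cauchy--Schwarz estimates of the form $|\tau(a_{d_1}\cdots a_{d_k})|^2\le \tau(a_{d_1}a_{d_1}^*)\,\tau(|a_{d_2}\cdots a_{d_k}|^2)$, should force all $\tau(a_d^* a_d)=0$, hence all $a_d=0$ (since $\tau$ is faithful on $F$), contradicting unitarity of $a$.

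An alternative, cleaner obstruction is available when $\gcd(k,n-1)>1$: in $K_0(\mathcal{O}_n)=\bZ/(n-1)$ the classes $[p_i]$ agree (as $\alpha$ is an automorphism), so $k[p_0]=[1]$, which is solvable only when $\gcd(k,n-1)=1$. The genuinely delicate case is thus $\gcd(k,n-1)=1$ — notably $n=2$, where $K_0(\mathcal{O}_2)=0$ and the $K$-theoretic route gives nothing — in which the KMS/Fourier argument above must be pushed through. The main obstacle is making the Cauchy--Schwarz combinatorics on the multinomial expansion of $1=\tau(a^k)$ sharp enough to interact cleanly with the sign pattern of $1-n^{-d}$ and rule out any nontrivial balance between positive-$d$ and negative-$d$ Fourier components of $a$.
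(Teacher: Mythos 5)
Your opening reduction is correct and coincides with the paper's first step: dimension zero yields a normal $a$ in the spectral subspace of the generator with $\sigma(a)\subseteq\stars{k}$ and $aa^*=1$, hence a unitary of order $k$ there. The problem is that neither of your two proposed ways of ruling such a unitary out is complete. The KMS identity $\sum_{d\equiv 1\,(k)}(1-n^{-d})\,\tau(a_d^*a_d)=0$ is correct but, as you observe, carries no contradiction on its own because the coefficients change sign between positive and negative $d$; everything therefore rests on the Cauchy--Schwarz analysis of the degree-zero part of $\tau(a^k)=1$, which you only sketch and explicitly flag as an unresolved obstacle. That step is genuinely problematic: Cauchy--Schwarz gives upper bounds on the mixed terms $\tau(a_{d_1}\cdots a_{d_k})$, and an upper bound cannot by itself exclude the ``nontrivial balance'' between positive- and negative-degree components that you need to forbid. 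Your fallback $K_0$ argument ($k[p_0]=[1]$ in $K_0(\mathcal{O}_n)\cong\bZ/(n-1)$, using that the gauge automorphism is homotopic to the identity and hence trivial on $K_0$ --- not merely that it is an automorphism) is sound but, as you concede, only bites when $\gcd(k,n-1)>1$. The case $\gcd(k,n-1)=1$, which includes $n=2$ for every $k$, is left open, so the proposal has a genuine gap covering infinitely many pairs $(n,k)$.

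The paper closes exactly this gap by strengthening the $K$-theoretic obstruction rather than by Fourier analysis. From the order-$k$ unitary $u$ in the first spectral subspace, Landstad's theorem gives $\mathcal{O}_n\cong\mathcal{O}_n^{\bZ/k}\rtimes\bZ/k$; the fixed-point algebra is identified with $\mathcal{O}_{n^k}$ via the universal property of Cuntz algebras; and Takai duality embeds $\mathcal{O}_n$ into $M_k(\mathcal{O}_{n^k})$ sending $1$ to $I_k$, so that $[1]_{\mathcal{O}_n}\mapsto k\cdot[1]_{\mathcal{O}_{n^k}}$ on $K_0$. Since $[1]$ has order $n-1$ in $K_0(\mathcal{O}_n)$ and order $n^k-1$ in $K_0(\mathcal{O}_{n^k})$, one would need $n^k-1$ to divide $k(n-1)$, which is impossible because $0<k(n-1)<n^k-1$. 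This tracks the class of the unit through the crossed-product identification instead of the spectral projections of $a$, and it works uniformly in $n,k\ge 2$; if you want to salvage your write-up, replacing the KMS portion with this argument (or finding some other input beyond $K_0(\mathcal{O}_n)$ alone) is necessary.
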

\begin{proof}If $\wtrivdim{\mathcal{O}_n}{\bZ/k} = 0$, then there is a unitary $u$ such that $u^k = 1$ and $u$ is in the spectral subspace associated to the generator $1 \in \bZ/k$, so $\mathcal{O}_n \cong \mathcal{O}_{n}^{\bZ/k} \rtimes \bZ/k$ by \cite[Theorem 4]{landstad}. The universal property of the Cuntz algebras (see \cite[$\S$1.9]{ctz-ctz}) implies $\mathcal{O}_n^{\bZ/k}\cong \mathcal{O}_{n^{k}}$, and hence 
\begin{equation*}
\mathcal{O}_n \cong \mathcal{O}_{n^k} \rtimes \bZ/k.
\end{equation*}
We may then embed $\mathcal{O}_n$ into $M_k(\mathcal{O}_{n^k})$ by applying Takai duality \cite[Theorem 10.1.2]{blk}:
\begin{equation}\label{eq:o2o4}
 \mathcal{O}_n \hookrightarrow \mathcal{O}_n\rtimes \widehat{\bZ/k}\cong (\mathcal{O}_{n^{k}}\rtimes \bZ/k)\rtimes \widehat{\bZ/k}\cong M_{k}(\mathcal{O}_{n^{k}}).
\end{equation}
This embedding sends $1 \in \mathcal{O}_n$ to the identity matrix $I_k \in M_k(\mathcal{O}_{n^k})$, so the induced map $K_0(\mathcal{O}_{n}) \to K_0(\mathcal{O}_{n^k})$ sends $[1]$ to $k \cdot [1]$. This is a contradiction, as the order of $[1] \in K_0(\mathcal{O}_n)$ is $n - 1$, the order of $[1] \in K_0(\mathcal{O}_{n^k})$ is $n^k - 1$, and $k(n - 1) < n^k - 1$.
\end{proof}

If $k = 2$, then $x := \text{Re}(S_1)$ and $y := \text{Im}(S_1)$ have $x^2 + y^2 \geq \frac{1}{2} S_1^* S_1 = \frac{1}{2}$, so $\wtrivdim{\mathcal{O}_n}{\bZ/2}$ is precisely equal to $1$. Below, we extend this claim to $\bZ/2^m$. The next result uses the notion of a {\it Kirchberg algebra}, i.e. a $C^*$-algebra that is separable, simple, nuclear, and purely infinite (for every $x \ne 0$ and $y$ there are $a, b$ such that $a x b=y$), as described in \cite[Introduction]{enders}.

\begin{proposition}\label{pr.ctz-z2}
For $n\ge 2$ and $m\geq 1$,  $\wtrivdim{\mathcal{O}_n}{\bZ/2^m}=1$. 
\end{proposition}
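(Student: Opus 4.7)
The lower bound $\wtrivdim{\mathcal{O}_n}{\bZ/2^m}\ge 1$ comes directly from \Cref{th.cuntz-ge1}. For the upper bound, my plan is to use \Cref{pr.starkcounting}, so that it suffices to exhibit two normal elements $a_0,a_1$ in the $\gamma$-spectral subspace of $\bZ/2^m$ (i.e.\ the gauge-degree $\equiv 1\pmod{2^m}$ subspace of $\mathcal{O}_n$) with $\sigma(a_i)\subseteq\stars{2^m}$ and $a_0 a_0^*+a_1 a_1^*$ invertible.

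The main building block will be a cyclic partial isometry. Given any pairwise prefix-incomparable words $J_0,\ldots,J_{2^m-1}$ over $\{1,\ldots,n\}$ with $|J_j|=j+1$, I set
\[
a(J_\bullet)\;=\;\sum_{j=0}^{2^m-1} S_{J_{(j+1)\bmod 2^m}}\,S_{J_j}^*.
\]
Each summand has gauge degree $\equiv 1\pmod{2^m}$, placing $a(J_\bullet)$ in the $\gamma$-spectral subspace. Prefix incomparability forces $S_{J_j}^* S_{J_k}=0$ for $j\ne k$, so the cross products vanish and
\[
a(J_\bullet)^* a(J_\bullet)\;=\;a(J_\bullet)\,a(J_\bullet)^*\;=\;\sum_j S_{J_j} S_{J_j}^*;
\]
the same telescoping yields $a(J_\bullet)^{2^m}=\sum_j S_{J_j} S_{J_j}^*$, a positive projection. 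Thus $a(J_\bullet)$ will be normal with $\sigma(a(J_\bullet))\subseteq\stars{2^m}$.

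I will then take $a_0=a(J_\bullet)$ with $J_j=1^j 2$ and assemble
\[
a_1\;=\;a(K_\bullet)+\sum_{i=3}^n a(K^{(i)}_\bullet),
\]
with $K_j=2^j 1$ and $K^{(i)}_j=2^j i$ for $i\in\{3,\ldots,n\}$ (the second sum being empty when $n=2$). A direct check shows the collection of all words appearing in $a_1$ to be pairwise prefix-incomparable: same-length words differ in their final letter, and a word $2^j c$ is not a prefix of a longer $2^{j'} c'$ because the terminal letter $c\in\{1,3,\ldots,n\}$ differs from the intermediate letter $2$. Global incomparability makes every cross product between constituent cycles of $a_1$ vanish, so $a_1$ will inherit normality, spectral subspace membership, and spectrum in $\stars{2^m}$ from each individual cycle.

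Finally, I will observe that $a_0 a_0^*$ and $a_1 a_1^*$ are sums of mutually orthogonal projections $S_W S_W^*$, so each dominates any subsum: $a_0 a_0^*\ge S_{J_0}S_{J_0}^*=S_2 S_2^*$, while
\[
a_1 a_1^*\;\ge\;S_{K_0}S_{K_0}^*+\sum_{i=3}^n S_{K^{(i)}_0}S_{K^{(i)}_0}^*\;=\;S_1 S_1^*+\sum_{i=3}^n S_i S_i^*\;=\;1-S_2 S_2^*.
\]
Adding these yields $a_0 a_0^*+a_1 a_1^*\ge 1$, which is invertible, giving the desired upper bound. The main subtlety is that $a_1$ is assembled from several cyclic pieces and must remain a single normal element; the explicit choice $K^{(i)}_j=2^j i$ makes the global prefix-incomparability transparent and resolves this cleanly.
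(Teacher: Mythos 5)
Your proof is correct, but it takes a genuinely different route from the paper. The paper obtains the upper bound $\wtrivdim{\cO_n}{\bZ/2^m}\le 1$ by conceptual machinery: the gauge action is pointwise outer, $\cO_n$ is a Kirchberg algebra, so the Rokhlin dimension of the gauge $\bZ/k$-action is at most $1$; equivariant projectivity of $C_0((0,1])\otimes C(\bZ/2^m)$ then converts the Rokhlin dimension bound into a weak local-triviality dimension bound. It is precisely this last projectivity step that forces the restriction to $k=2^m$. You instead exhibit the two witnessing elements of \Cref{pr.starkcounting} explicitly, as sums of \emph{cyclic partial isometries} $\sum_j S_{J_{j+1}}S_{J_j}^*$ built from pairwise prefix-incomparable words of consecutive lengths; the orthogonality relations $S_\mu^*S_\nu=0$ for incomparable words make each such element a normal partial isometry of order $k$ on its support, with spectrum in $\{0\}\cup\{\zeta:\zeta^k=1\}\subseteq\stars{k}$, and your choice of words gives $a_0a_0^*\ge S_2S_2^*$ and $a_1a_1^*\ge 1-S_2S_2^*$, hence an invertible (indeed, $\ge 1$) sum. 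I checked the details (prefix-incomparability of $\{1^j2\}_j$ and of $\{2^jc : c\ne 2\}_{j,c}$, normality, degree $\equiv 1 \pmod k$, and the telescoping giving $a_i^k=a_ia_i^*$) and they all hold. What your approach buys is twofold: it is elementary and self-contained, avoiding Rokhlin dimension and equivariant semiprojectivity entirely; and nothing in the construction uses that $k$ is a power of $2$, so combined with \Cref{th.cuntz-ge1} it proves the stronger statement $\wtrivdim{\cO_n}{\bZ/k}=1$ for every $k\ge 2$. What the paper's approach buys is a template that applies to other pointwise outer actions on Kirchberg algebras where no explicit graph-algebra combinatorics is available.
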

\begin{proof}
The sole theorem of \cite{outercuntz} implies that the gauge action on $\mathcal{O}_n$ is pointwise outer, as are its restrictions to finite subgroups. That is, for every $g\in \bS^1\setminus\{1\}$, the automorphism $\alpha_g$ is not implemented as ${\rm Ad}(u)$ for a unitary $u\in \mathcal{O}_n$. Since Cuntz algebras are Kirchberg algebras, we use \cite[Theorem~4.19]{rokdim} to deduce that for any $k \geq 2$, the Rokhlin dimension of the gauge $\bZ/k$-action on $\mathcal{O}_n$ is at most $1$.

Next, $C_0((0, 1])$ is a separable, projective $C^*$-algebra, so \cite[Proposition~2.10]{Semiprojective} shows that the action of $\bZ/2^m$ on $C_0((0, 1]) \otimes C(\bZ/2^m)$ through translation on the right tensor factor is projective. By \cite[Proposition 5.12]{hajacindex}, we conclude that $\wtrivdim{\mathcal{O}_n}{\bZ/2^m}$ is at most the Rokhlin dimension, so $\wtrivdim{\mathcal{O}_n}{\bZ/2^m} \leq 1$. The reverse inequality follows from \Cref{th.cuntz-ge1}.
\end{proof}

%%%%%%%%%%%%%%%%%%%%%%%%%%%%%%%%%%%%%%%%%%%%%%%%%%%%%%%%%%%%%%%%%%%%%%%%%%%%%%%%%%%%%%
\subsection{The Toeplitz algebra}\label{se.tplz}

The Toeplitz algebra $\cT = C^*(V \, | \, V^*V = 1)$ is determined by the following graph.

\vspace{.15 cm}

\begin{figure}[h!]
\begin{center}
\begin{tikzpicture}[auto,swap]
\tikzstyle{vertex}=[circle,fill=black,minimum size=6pt,inner sep=0pt]
\tikzstyle{edge}=[draw,->]
\tikzset{every loop/.style={min distance=20mm,in=135,out=45,looseness= 6}}
    \node[vertex] (1) at (0,0) {};
    \node[vertex] (2) at (1.5,0) {};
    \path (1) edge [edge, loop above] node {} (1);
    \path (1) edge [edge] node {} (2);
\end{tikzpicture}
\end{center}
\caption{Graph representation of $\cT$.}
\end{figure}

\vspace{.3 cm}

\subsubsection{Strong local-triviality dimension}

In this subsection, which culminates in  \Cref{th.tplz-inf}, we show that $\strivdim{\cT}{\bZ/k} = \infty$ for all $k \geq 2$. We begin with an informal description. 

Let $\pi: \cT \to C(\bS^1)$ be the symbol map, and note that the kernel of $\pi$ is isomorphic to $\cK$, the $C^*$-algebra of compact operators.
The Toeplitz extension, namely the extension
\begin{equation}\label{eq:4}
  0\to \cK\to \cT\to C(\bS^1)\to 0
\end{equation}
of $C(\bS^1)$ by $\cK$, can be made $\bZ/k$-equivariant by realizing all three algebras as graph $C^*$-algebras and considering their gauge $\bZ/k$-actions. If $\strivdim{\cT}{\bZ/k} = n < \infty$, then by \Cref{pr.starkcounting}, there exist normal commuting elements $a_i \in \cT$ for each $0 \leq i \leq n$, such that $\sum\limits_{i=0}^n a_i a_i^* = 1$ and each $a_i$ is in the spectral subspace associated to the generator $1 \in \bZ/k$. That is, there exists a
$\bZ/k$-equivariant, unital $*$-homomorphism from $C(\bS^{2n+1})$ to $\cT$ defined by $z_i \mapsto a_{i-1}$ for each $1 \leq i \leq n+1$. Here, $z_i$ denotes the $i$th complex coordinate function on $\bS^{2n+1}$. Equivalently, the $\bZ/k$-equivariant, unital $*$-homomorphism $\Psi: C(\bS^{2n + 1}) \to C(\bS^1)$ defined by $\Psi(z_i) = \pi(a_{i-1})$ admits a $\bZ/k$-equivariant lift to $\cT$. We will use equivariant $K$-theory and $K$-homology to reach a contradiction, namely by showing that the equivariant map $\Psi$ cannot have an equivariant lift to $\cT$.

We next give a short review of equivariant $K$-theory and $K$-homology in topology. For a compact group $G$ acting on a compact space $X$, the equivariant $K$-theory groups $K_G^0(X)$ and $K_G^1(X)$ were defined using classes of vector $G$-bundles 
over $X$ by Atiyah and Segal~\cite{segal,atiyahsegal} based on the earlier work of Atiyah and Hirzebruch~\cite{atiyahhirzebruch}. 
$K$-homology is the homology theory dual to \mbox{$K$-theory}, and it has a geometric description (which uses manifolds and bordisms) due to Baum and Douglas~\cite{baumdouglas}. This description allows for an equivariant generalization~\cite{boohs,bhs}.
In the non-equivariant case, Brown, Douglas, and Fillmore (BDF)~\cite{bdf} provided an equivalent analytic description of $K$-homology of compact metric spaces using classes of $C^*$-extensions of the form 
\[
0\to\cK\to A\to C(X)\to 0,
\]
where $A$ is a unital separable $C^*$-algebra containing compact operators. In fact, this is why the first $K$-homology group of a compact space $X$ is often denoted by ${\rm Ext}(X)$ instead of $K_1(X)$. To obtain a $G$-equivariant version of BDF theory, one has to introduce a $G$-action on $\cK$. Thomsen essentially gave such an action in~\cite{thm-kk}, wherein he considered the $C^*$-algebra of compact operators on the countably infinite direct sum $\bigoplus_{i=0}^\infty L^2(G)$. In the same paper, Thomsen also showed the equivalence of equivariant BDF theory with geometric equivariant $K$-homology~\cite[Theorem~9.2]{thm-kk}\footnote{In fact, Thomsen showed that equivariant BDF theory is equivalent to the $KK$-theoretic description of equivariant $K$-homology due to Kasparov~\cite{kk2}. The equivalence with geometric $K$-homology then follows from~\cite[Theorem~3.11]{boohs} or~\cite[Theorem~4.6]{bhs}.}. 
Here, we are interested in Thomsen's construction in the case $G=\bZ/k$. Let $\widetilde{\cK}_k$ denote the set of compact operators on the countably infinite direct sum $\bigoplus_{i=0}^\infty L^2(\bZ/k)$, and let $\bZ/k$ act on each summand via the regular representation. The equivariant $K$-homology group $K^{\bZ/k}_1(X)$ is then identified with classes of equivariant $C^*$-extensions of the form
\begin{equation}\label{eq.extensionsbutwrongone}
0\to\widetilde{\cK}_k\to A\to C(X)\to 0.
\end{equation}

Since we work with graph $C^*$-algebras, we will need a seemingly different realization of the compact operators: the $C^*$-algebra $\cK$ is the graph $C^*$-algebra associated to an infinite path. The resulting gauge $\bZ/k$-action can be described as follows. First, fix a primitive $k$th root of unity $\zeta$ such that the generator $\sigma$ of $\bZ/k$ scales the edge partial isometries of the path by $\zeta$. Realize
\begin{equation*}
  \cK=\cK(\ell^2(\bN)),
\end{equation*}
and fix the standard orthonormal basis $\{e_i\}_{i\in \bN}$ for $\ell^2(\bN)$. The rank-one ``matrix units'' $e_{ij}$ are defined by
\begin{equation*}
  e_{ij} e_k = \delta_{j,k} e_i,
\end{equation*}
where
\begin{equation*}
  \delta_{j,k}=
  \begin{cases}
    1 &\text{ if }j=k\\
    0 &\text{otherwise}
  \end{cases}
\end{equation*}
is the Kronecker delta. Moreover, $\sigma$ scales $e_{ij}$ by $\zeta^{i-j}$. The action extends to all of $\cK$, as the latter is the closed span of the matrix units $e_{ij}$. Note that this is simply the extension to $\cK$ of the gauge actions on $M_n$, described in the discussion preceding \Cref{prop.mat}, upon writing
\begin{equation*}
  \cK=\varinjlim M_n
\end{equation*}
for the standard upper-left-corner embeddings $M_n \hookrightarrow M_{n+1}$.

\begin{proposition}\label{le.samek}
$\cK$ is $\bZ/k$-equivariantly isomorphic to $\widetilde{\cK}_k$.
\end{proposition}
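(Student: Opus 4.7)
The plan is to realize both $\cK$ and $\widetilde{\cK}_k$ as $\cK(\cH)$ for two $\bZ/k$-Hilbert spaces with actions implemented by conjugation, show that the two underlying unitary representations of $\bZ/k$ are equivalent, and conclude by conjugating by any intertwiner.

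First I would implement the $\bZ/k$-action on $\cK = \cK(\ell^2(\bN))$ as conjugation by the diagonal unitary $U\in \cB(\ell^2(\bN))$ defined by $Ue_i = \zeta^i e_i$. A short computation using $e_{ij}e_\ell = \delta_{j\ell}e_i$ gives $Ue_{ij}U^* = \zeta^{i-j}e_{ij}$, which matches the description recalled just before the statement (and extends by continuity and linearity to all of $\cK$, since it is the norm closure of the span of the $e_{ij}$). Thus $\ell^2(\bN)$ becomes a unitary $\bZ/k$-representation $\rho$ with $\rho(\sigma)=U$, and the action on $\cK$ is $\mathrm{Ad}\,\rho$.

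Next I would compute the isotypic decompositions of the two $\bZ/k$-Hilbert spaces:
\begin{itemize}
\item For $(\ell^2(\bN),\rho)$: writing $\chi_m\in\widehat{\bZ/k}$ for the character $\sigma\mapsto\zeta^m$, the $\chi_m$-isotypic component is $V_m := \overline{\mathrm{span}}\{e_i\ |\ i\equiv m \pmod k\}$, which is separable and infinite-dimensional.
\item For $\cH := \bigoplus_{i=0}^\infty L^2(\bZ/k)$ with the direct sum of regular representations: the regular representation of a finite abelian group decomposes into each character with multiplicity one, i.e.\ $L^2(\bZ/k)\cong\bigoplus_{m=0}^{k-1}\bC_{\chi_m}$, so $\cH \cong \bigoplus_{m=0}^{k-1}\ell^2(\bN)_{\chi_m}$.
\end{itemize}
Both representations therefore carry every character of $\bZ/k$ with countably infinite multiplicity, so by picking any unitary identification of corresponding isotypic components one obtains a $\bZ/k$-equivariant unitary $W:\cH\to\ell^2(\bN)$.

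Finally, $\mathrm{Ad}\,W:\cK(\cH)\to\cK(\ell^2(\bN))$ is the desired isomorphism: it is clearly a $*$-isomorphism, and if $T\in\cK(\cH)$ and $\pi_{\mathrm{reg}}$ denotes the representation on $\cH$, then $W\pi_{\mathrm{reg}}(\sigma) = \rho(\sigma)W$ yields
\begin{equation*}
(\mathrm{Ad}\,W)(\sigma\cdot T) = W\pi_{\mathrm{reg}}(\sigma)T\pi_{\mathrm{reg}}(\sigma)^*W^* = \rho(\sigma)(WTW^*)\rho(\sigma)^* = \sigma\cdot(\mathrm{Ad}\,W)(T).
\end{equation*}
There is no real obstacle here: the only point to verify with any care is that the abstract gauge action on $\cK$ (defined via its effect on the matrix units) is genuinely spatial, implemented by the diagonal unitary above. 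Once this is observed, the conclusion reduces to the elementary representation theory of finite abelian groups.
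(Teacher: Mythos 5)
Your proof is correct, but it takes a genuinely different route from the paper's. You pass to the Hilbert-space level: you observe that both actions are spatially implemented (the gauge action on $\cK = \cK(\ell^2(\bN))$ by the diagonal unitary $U e_i = \zeta^i e_i$, the action on $\widetilde{\cK}_k$ by the direct sum of regular representations), check that the two unitary representations of $\bZ/k$ contain every character with countably infinite multiplicity, and conjugate by an intertwining unitary. The paper instead argues entirely at the $C^*$-level: it writes both $\cK$ and $\widetilde{\cK}_k$ as direct limits of the matrix algebras $M_{nk}$ under corner embeddings, identifies each finite stage $B\bigl(\bigoplus_{i=0}^{n-1} L^2(\bZ/k)\bigr)$ equivariantly with the graph $C^*$-algebra of a path of $nk$ vertices carrying the gauge $\bZ/k$-action, and concludes that the limits are equivariantly isomorphic. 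The two arguments are close in spirit --- the paper's identification of each $M_{nk}$ also amounts to diagonalizing the regular representation --- but yours is more self-contained: it avoids having to verify that the connecting maps of the two inductive systems are compatible with the equivariant identifications (and that passing to the cofinal subsystem $\{M_{nk}\}$ of $\{M_n\}$ is harmless), a point the paper's proof treats rather briskly. The paper's version, on the other hand, stays inside the graph-algebra formalism used throughout the section, which is why the authors phrase it that way. The only step you rightly flag as needing care --- that the abstract gauge action on the infinite-path algebra really is $\mathrm{Ad}\,U$ for the stated diagonal unitary --- is exactly the content of the discussion preceding the proposition, so nothing is missing.
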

\begin{proof}
Recall that $\widetilde{\cK}_k$ was defined as the set of compact operators on the direct sum $\bigoplus_{i=0}^\infty L^2(\bZ/k)$, where $\bZ/k$ acts on each summand via the regular representation. As a $\bZ/k$-$C^*$-algebra, $\widetilde{\cK}_k$ is the direct limit of the matrix algebra corner embeddings
  \begin{equation}\label{eq:mnk}
    M_{nk}(\bC)\cong B\left(\bigoplus_{i=0}^{n-1} L^2(\bZ/k)\right)\subset B\left(\bigoplus_{i=0}^{n} L^2(\bZ/k)\right)\cong M_{(n+1)k}(\bC),
  \end{equation}
where $\bZ/k$ acts via conjugation by $\mathrm{diag}(1,\zeta,\zeta^2,\cdots)$ for a primitive $k$th root of unity $\zeta$. The matrix algebra $M_{nk}(\bC)$ in~\Cref{eq:mnk} is $\bZ/k$-equivariantly isomorphic to the graph $C^*$-algebra of a path of $nk$ vertices. Since the $\bZ/k$-$C^*$-algebra $\cK$ is the direct limit of matrix algebras, given as the graph $C^*$-algebras of finite paths, $\widetilde{\cK}_k$ and $\cK$ are $\bZ/k$-equivariantly isomorphic.
\end{proof}

It follows from \Cref{le.samek} and the discussion preceding \Cref{eq.extensionsbutwrongone} that $K_1^{\bZ/k}(X)$ is identified with classes of equivariant extensions
\begin{equation}\label{eq.equiv_class_equiv}
  0\to \cK \to A\to C(X)\to 0,
\end{equation}
where $\cK$ is realized as the graph $C^*$-algebra of an infinite path and is equipped with the gauge $\bZ/k$-action.

Note that if a finite group $G$ acts freely on a compact space $X$, then we also have a natural identification
\begin{equation*}
  K_*^{G}(X)\cong K_*(X/G), 
\end{equation*} 
as per \cite[note (3.12)(ii)]{bch} (see also  \cite[Proposition 4.2.9]{val-bc}). The quotient of $\bS^1$ by $\bZ/k$ is $\bS^1$, so the equivariant and non-equivariant $K$-homology groups of the circle are isomorphic. In fact, the generators may be identified by a different argument.

\begin{lemma}\label{le.generator_extension}
For $k \geq 2$, $K_1^{\bZ/k}(\bS^1) \cong \bZ$ is generated by the equivalence class of the $\bZ/k$-equivariant extension $0\to \cK\to \cT\to C(\bS^1)\to 0$.
\end{lemma}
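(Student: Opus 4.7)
The plan is to use the free-action identification $K_1^{\bZ/k}(\bS^1) \cong K_1(\bS^1/(\bZ/k)) \cong K_1(\bS^1) \cong \bZ$ recalled in the paragraph preceding the lemma, and to check that $[\cT]$ corresponds under it to the classical Toeplitz extension of $C(\bS^1/(\bZ/k))$, which is the standard $K_1$-generator of the circle (via $\operatorname{ind}(V) = -1$).

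Concretely, I would represent $\cT$ on $\ell^2(\bN)$ with $V$ the forward shift, so that the gauge $\bZ/k$-action is implemented by conjugation by $u = \operatorname{diag}(1,\zeta,\zeta^2,\ldots)$ for $\zeta = e^{2\pi i/k}$, with eigenspace decomposition $\ell^2(\bN) = \bigoplus_{a=0}^{k-1}H_a$ where $H_a = \overline{\operatorname{span}}\{e_j : j\equiv a \pmod{k}\}$. Taking fixed points throughout the Toeplitz extension yields
\begin{equation*}
0 \to \cK^{\bZ/k} \to \cT^{\bZ/k} \to C(\bS^1/(\bZ/k)) \to 0,
\end{equation*}
with $\cK^{\bZ/k} = \bigoplus_a \cK(H_a)$. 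Using the spectral-subspace decomposition $\cT^{\bZ/k} = \overline{\bigoplus_{m\in\bZ}\cT(mk)}$ together with the fact that the invariant isometry $V^k$ has symbol $z^k$ (which generates $C(\bS^1/(\bZ/k))$ under $z\mapsto z^k$) and restricts on each $H_a \cong \ell^2(\bN)$ to the unilateral shift, one identifies $\cT^{\bZ/k}$ with the subalgebra of $\bigoplus_a B(H_a)$ consisting of $k$-tuples of Toeplitz operators having a common symbol. Compressing this fixed-point extension to a single summand $H_a$ then produces the classical Toeplitz extension $0\to \cK\to \cT\to C(\bS^1/(\bZ/k))\to 0$, which generates $K_1(\bS^1/(\bZ/k)) \cong \bZ$.

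The most delicate step is justifying that the free-action identification from~\cite{bch,val-bc} is realized on equivariant BDF extension classes by this fixed-point restriction followed by reduction to a single corner, and in particular that all $k$ summand choices yield the same class in $K_1(\bS^1/(\bZ/k))$. A less synthetic alternative would be to pair $[\cT]$ with the $\bZ/k$-invariant $K^1$-generator $[z^k] \in K^1_{\bZ/k}(\bS^1)$: the equivariant index of the lift $V^k \in \cT$ is $[\ker(V^{*k})] = \operatorname{span}\{e_0,\ldots,e_{k-1}\}$, the regular representation class in $R(\bZ/k)$; reconciling this equivariant pairing with the non-equivariant pairing on the quotient $\bS^1/(\bZ/k)$ (via the augmentation implicit in the free-action identification) forces $[\cT]$ to be a generator.
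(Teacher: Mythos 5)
Your strategy is genuinely different from the paper's, and the step you yourself flag as ``most delicate'' is a real gap, not just a technicality. You propose to compute the image of the equivariant class $[\cT]$ under the free-action identification $K_1^{\bZ/k}(\bS^1)\cong K_1(\bS^1/(\bZ/k))$ by taking fixed points of the extension and then compressing to one eigenspace corner $H_a$. But the identification cited from \cite{bch,val-bc} is an abstract isomorphism (via descent/Green--Julg at the level of $KK$ or geometric $K$-homology), and nothing in your argument shows that it is implemented on equivariant BDF classes by the recipe ``take fixed points, then cut to a corner.'' Note in particular that $\cK^{\bZ/k}\cong\bigoplus_{a=0}^{k-1}\cK(H_a)$ is not the ideal $\cK$ of a BDF extension over $\bS^1/(\bZ/k)$ until after the corner compression, so you must verify both that the compression preserves the extension class (fullness of the corner) and that the resulting class is the image of $[\cT]$ under the \emph{specific} isomorphism used to identify $K_1^{\bZ/k}(\bS^1)$ with $\bZ$. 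Without that, showing the compressed extension generates $K_1(\bS^1/(\bZ/k))$ does not yet show $[\cT]$ generates $K_1^{\bZ/k}(\bS^1)$. Your alternative via the equivariant index pairing has the same issue: you would need to reconcile the $R(\bZ/k)$-valued pairing with the integer-valued one on the quotient through the same unexamined isomorphism.

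The paper avoids all of this with a short arithmetic argument. It uses the quotient identification only to establish the abstract isomorphism $K_1^{\bZ/k}(\bS^1)\cong\bZ$, and then considers the forgetful homomorphism $r:K_1^{\bZ/k}(\bS^1)\to K_1(\bS^1)$ that discards the $\bZ/k$-structure of an extension. This map sends the equivariant Toeplitz extension to the ordinary Toeplitz extension, which generates $K_1(\bS^1)\cong\bZ$ by index theory. Since any element of $\bZ$ whose image under a homomorphism $\bZ\to\bZ$ is a generator must itself be a generator, the equivariant class generates $K_1^{\bZ/k}(\bS^1)$. No explicit description of the free-action isomorphism on extension classes is ever needed. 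If you want to salvage your approach, the fixed-point/corner computation could likely be made rigorous (it amounts to tracing through the Morita equivalence $C(\bS^1)\rtimes\bZ/k\sim C(\bS^1/(\bZ/k))$ on Busby invariants), but it is considerably more work than the problem requires.
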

\begin{proof}
The non-equivariant Toeplitz extension $0 \to \cK \to \cT \to C(\bS^1) \to 0$ generates $K_1(\bS^1) \cong \bZ$ by index theory (see e.g.~\cite[p.19]{dgl-khom}). By the above discussion of quotient spaces, we also have $K_1^{\bZ/k}(\bS^1) \cong \bZ$. Next, restrict to non-equivariant $K$-homology using a group homomorphism $r:K_1^{\bZ/k}(\bS^1) \to  K_1(\bS^1)$ that forgets the $\bZ/k$-structure of a $C^*$-extension. The image of the $\bZ/k$-equivariant Toeplitz extension $0\to \cK\to \cT\to C(\bS^1)\to 0$ is the non-equivariant Toeplitz extension. Since the latter generates $K_1(\bS^1) \cong \bZ$, the former must also generate $K_1^{\bZ/k}(\bS^1) \cong \bZ$.
\end{proof}

Thus, if a $\bZ/k$-equivariant unital $*$-homomorphism $\Psi: C(\bS^{2n-1}) \to C(\bS^1)$ is nontrivial on $K_1^{\bZ/k}$, there is no $\bZ/k$-equivariant lift of $\Psi$ to $\cT$. We next show that $\bZ/k$-equivariant unital $*$-homomorphisms from $C(\bS^{2n-1})$ to $C(\bS^1)$ are always nontrivial using topological means.

As above, if a finite group $G$ acts freely on a compact space $X$, then \cite[note (3.12)(ii)]{bch} or \cite[Proposition 4.2.9]{val-bc} shows there is a natural identification
\begin{equation*}
  K_*^{G}(X)\cong K_*(X/G).
\end{equation*} 
This applies to the standard rotation action of $\bZ/k$ on $\bS^{2n+1}$, whose quotient is the lens space $\bL_k^{2n+1}=\bS^{2n+1}/(\bZ/k)$. That is,
\begin{equation}\label{eq.lens_equiv_noequiv}
  K_*^{\bZ/k}(\bS^{2n+1})\cong K_*(\bL_k^{2n+1}).
\end{equation}
Recall also that for compact metrizable $X$, the universal coefficient theorem~\cite[Theorem~2]{uctbrown} (see also \cite[Theorem 16.3.3]{blk}) fits the first $K$-homology group into an unnaturally-splitting natural exact sequence
  \begin{equation}\label{eq:new1}
    0\to \mathrm{Ext}^1_{\bZ}(K^0(X),\bZ)\to K_1(X)\to \mathrm{Hom}_{\bZ}(K^1(X),\bZ)\to 0.
  \end{equation}
  In our computations, every term will be finitely generated abelian. The group $\mathrm{Hom}_{\bZ}(K^1(X),\bZ)$ is free abelian, so the surjection in \Cref{eq:new1} splits, and hence $K_1(X) \cong \mathrm{Ext}^1_{\bZ}(K^0(X),\bZ) \oplus \mathrm{Hom}_{\bZ}(K^1(X),\bZ)$. It also follows from the discussion directly before \cite[Corollary~3.3]{hatcher} that $\mathrm{Ext}^1_{\bZ}(K^0(X),\bZ)$ is a torsion group. Since the finitely generated abelian group $K_1(X)$ is now written as the direct sum of a torsion group and a free abelian group, we conclude $\mathrm{Ext}^1_{\bZ}(K^0(X),\bZ) \cong \operatorname{tor}(K_1(X))$.

Let $E$ be the $\bZ/k$-module that is a direct sum of $n+1$ copies of the character given by a~primitive $k$th root of unity. Then the sphere module $S(E)$, viewed as a free $\bZ/k$-space, is $\bZ/k$-equivariantly isomorphic to $\bS^{2n+1}$, where we again consider the rotation action. Applying \cite[Corollary 2.7.6]{at-k} to $E$ gives an exact sequence
  \begin{equation}\label{eq:annoying}
    0\to K^1(\bL_k^{2n + 1})\to R(\bZ/k)\to R(\bZ/k)\to K^0(\bL_k^{2n+1})\to 0 \,.
  \end{equation}
  Here $R(\bZ/k)$ denotes the representation ring of $\bZ/k$, namely the free abelian group generated by the (equivalence classes of) irreducible complex representations. There is a ring isomorphism
  \begin{equation*}
    R(\bZ/k)\cong \bZ[\bZ/k],
  \end{equation*}
  and the multiplication in $R(\bZ/k)$ is given by the tensor product of representations.

\begin{lemma}\label{le.k1-emb}
  For $n\ge 2$, the equatorial inclusion $\bS^{2n-1} \hookrightarrow \bS^{2n+1}$ induces an embedding
  \begin{equation}\label{eq.oiwuerisdfhsd}
   \operatorname{tor}\left( K_{1}^{\bZ/k}(\bS^{2n-1}) \right) \hookrightarrow \operatorname{tor}\left(K_{1}^{\bZ/k}(\bS^{2n+1}) \right).
  \end{equation}
\end{lemma}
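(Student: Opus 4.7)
The plan is to translate the statement into a question about $K$-cohomology of the lens spaces and exploit the computation coming from~\Cref{eq:annoying}. By~\Cref{eq.lens_equiv_noequiv}, the equatorial inclusion $\bS^{2n-1}\hookrightarrow \bS^{2n+1}$ descends to $\iota\colon \bL_k^{2n-1}\hookrightarrow \bL_k^{2n+1}$, and the induced map on equivariant $K$-homology is canonically identified with $\iota_*\colon K_1(\bL_k^{2n-1})\to K_1(\bL_k^{2n+1})$. The UCT~\Cref{eq:new1} is natural in the space, and because $\operatorname{Hom}_{\bZ}(K^1(-),\bZ)$ is always torsion-free, the torsion subgroup of $K_1(-)$ is naturally identified with $\operatorname{Ext}^1_{\bZ}(K^0(-),\bZ)$. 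Accordingly, it suffices to show that the map
\begin{equation*}
  (\iota^*)^*\colon \operatorname{Ext}^1_{\bZ}(K^0(\bL_k^{2n-1}),\bZ)\longrightarrow \operatorname{Ext}^1_{\bZ}(K^0(\bL_k^{2n+1}),\bZ),
\end{equation*}
induced by the pullback $\iota^*\colon K^0(\bL_k^{2n+1})\to K^0(\bL_k^{2n-1})$, is injective.

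Next, I would use~\Cref{eq:annoying} to write, for each $m\ge 1$,
\begin{equation*}
  K^0(\bL_k^{2m+1})\;\cong\; R(\bZ/k)\big/(1-L)^{m+1}R(\bZ/k).
\end{equation*}
The naturality of Atiyah's construction under the inclusion of the $n$-dimensional subrepresentation $E'\subset E$ (with $S(E')\cong \bS^{2n-1}$ equivariantly) identifies $\iota^*$ with the canonical surjection of quotient rings induced by the containment $(1-L)^{n+1}R(\bZ/k)\subseteq (1-L)^nR(\bZ/k)$. In particular $\iota^*$ is surjective. Moreover, the augmentation $\epsilon\colon R(\bZ/k)\to\bZ$, $L\mapsto 1$, vanishes on $(1-L)^{m+1}R(\bZ/k)$ and hence splits off a natural free summand $\bZ$ from each $K^0(\bL_k^{2m+1})$. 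Functoriality of the rank (a trivial bundle pulls back to a trivial bundle) shows that $\iota^*$ acts as the identity on these $\bZ$ summands, so the kernel $K:=\ker\iota^*$ lies inside the torsion subgroup of $K^0(\bL_k^{2n+1})$, which is finite.

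Finally, applying $\operatorname{Ext}^1_{\bZ}(-,\bZ)$ to the short exact sequence
\begin{equation*}
  0\to K\to K^0(\bL_k^{2n+1})\xrightarrow{\;\iota^*\;} K^0(\bL_k^{2n-1})\to 0
\end{equation*}
yields
\begin{equation*}
  \operatorname{Hom}_{\bZ}(K,\bZ)\to \operatorname{Ext}^1_{\bZ}(K^0(\bL_k^{2n-1}),\bZ)\to \operatorname{Ext}^1_{\bZ}(K^0(\bL_k^{2n+1}),\bZ),
\end{equation*}
and the leftmost term vanishes because $K$ is finite, proving the desired injectivity. The main subtlety lies in Step~2: one must verify that the induced map on $K^0$ really is the natural quotient surjection $R(\bZ/k)/(1-L)^{n+1}R(\bZ/k)\twoheadrightarrow R(\bZ/k)/(1-L)^nR(\bZ/k)$ under the identification provided by~\Cref{eq:annoying}. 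This requires unpacking the naturality of the Atiyah exact sequence with respect to inclusions of subrepresentations, rather than merely quoting its group-theoretic form.
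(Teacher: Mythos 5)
Your proposal is correct and follows essentially the same route as the paper's proof: both reduce via the UCT to showing that the pullback $\iota^*$ on $K^0$ of lens spaces is a surjection with torsion kernel, both establish this by the naturality of the Atiyah sequence \Cref{eq:annoying} (identifying $\iota^*$ with the quotient map $R(\bZ/k)/(1-\rho)^{n+1}\twoheadrightarrow R(\bZ/k)/(1-\rho)^{n}$), and both conclude by the vanishing of $\operatorname{Hom}_{\bZ}(\text{torsion},\bZ)$ in the long exact $\operatorname{Ext}$ sequence. The only cosmetic difference is that the paper passes to reduced $\widetilde{K}^0$ to dispose of the free summand, whereas you split it off via the augmentation; the subtlety you flag in Step~2 is handled in the paper exactly as you suggest, by the commutative diagram comparing the two Atiyah sequences.
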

\begin{proof}
Recall that from \Cref{eq:new1} and the discussion immediately thereafter, we have $\operatorname{tor}( K_{1}^{\bZ/k}(\bS^{2n+1}))\cong \operatorname{Ext}_\bZ^1(K^0(\bL_k^{2n+1}),\bZ)$. To prove the lemma, it suffices to show that the equatorial embedding $f_n:\bL_k^{2n-1}\to \bL_k^{2n+1}$ induces a {\it surjection} on the torsion of the $K^0$ group (i.e. the reduced $K^0$ group, $\widetilde{K}^0$). Indeed, since by definition \[K^0(\bL_k^{2n\pm 1})\cong\bZ\oplus\widetilde{K}^0(\bL_k^{2n\pm 1}),\] $\operatorname{Ext}_\bZ^1(-,\bZ)$ commutes with the direct sum (see~\cite[Proposition~3.3.4]{weibel}), and $\operatorname{Ext}^1_\bZ(\bZ,\bZ)=0$ (see \cite[Exercise~2.5.2]{weibel}), we obtain that
\[
\operatorname{Ext}^1_\bZ(K^0(\bL_k^{2n\pm 1}),\bZ)\cong \operatorname{Ext}^1_\bZ(\widetilde{K}^0(\bL_k^{2n\pm 1}),\bZ).
\]
The map $f_n:\bL_k^{2n-1}\to \bL_k^{2n+1}$ induces a group homomorphism $\widetilde{f}_n:\widetilde{K}^0(\bL_k^{2n+1})\to \widetilde{K}^0(\bL_k^{2n- 1})$, which in turn (by definition of $\operatorname{Ext}$ as a derived functor~\cite[Definition~2.1.1]{weibel}) gives rise to a long exact sequence of the form
\begin{align*}
0\to \operatorname{Hom}_\bZ(\widetilde{K}^0(\bL_k^{2n- 1}),\bZ)\to \operatorname{Hom}_\bZ(\widetilde{K}^0(\bL_k^{2n+1}),\bZ)&\to \\ \to\operatorname{Hom}_\bZ(\ker f_n,\bZ)\to \operatorname{Ext}^1_\bZ(\widetilde{K}^0(\bL_k^{2n-1}),\bZ)&\to\operatorname{Ext}^1_\bZ(\widetilde{K}^0(\bL_k^{2n+1}),\bZ)\to\cdots\,.
\end{align*}
Since $\ker f_n$ is torsion, the group $\operatorname{Hom}_\bZ(\ker f_n,\bZ)$ is trivial, so the induced group homomorphism $\operatorname{Ext}^1_\bZ(\widetilde{K}^0(\bL_k^{2n-1}),\bZ)\to \operatorname{Ext}^1_\bZ(\widetilde{K}^0(\bL_k^{2n+1}),\bZ)$ is injective.

Let us now prove that $f_n$ induces a surjection on $\widetilde{K}^0$. First, the exact sequences
  \begin{equation}\label{pmsequence}
    0\to K^1(\bL_k^{2n\pm 1})\to R(\bZ/k)\to R(\bZ/k)\to K^0(\bL_k^{2n\pm1})\to 0
  \end{equation}
obtained by applying \Cref{eq:annoying} to both lens spaces are natural with respect to $f_n:\bL_k^{2n-1}\to \bL_k^{2n+1}$. Next, we follow~\cite[p.106]{at-k}. We have that $R(\bZ/k)\cong \bZ[\rho]/(\rho^k-1)$, and the middle map in~\eqref{eq:annoying} is simply multiplication by $(1-\rho)^{n+1}$. Comparing the sequences \eqref{pmsequence}, we obtain a~commutative diagram with exact top and bottom.
\[
\begin{tikzcd}
R(\bZ/k) \arrow[r,"(1-\rho)^{n+1}"] \arrow[d, "1-\rho"] & R(\bZ/k) \arrow[r] \arrow[d,"1"] & K^0(\bL_k^{2n+1}) \arrow[r] \arrow[d] & 0\\
R(\bZ/k) \arrow[r,"(1-\rho)^n"] & R(\bZ/k) \arrow[r] & K^0(\bL_k^{2n-1}) \arrow[r] & 0
\end{tikzcd}
\]
Thus, the map $K^0(\bL^{2n+1})\to K^0(\bL^{2n-1})$ is onto. Since $\widetilde{K}^0(X)$ is the quotient of $K^0(X)$ by the image of the map $K^0(\text{pt})\to K^0(X)$ (induced by a constant map $X\to \text{pt}$), the proof is complete.
\end{proof}

We can finally prove the desired nontriviality of equivariant maps on equivariant $K$-homology.

\begin{theorem}\label{th.triv}
 Let $\phi_n:\bS^1\to \bS^{2n+1}$ be $\bZ/k$-equivariant for some $n\geq 0$ and $k \geq 2$. Then the group homomorphism
  \begin{equation}\label{eq:new3}
    \phi_{n*}:\bZ\cong K_1^{\bZ/k}(\bS^1)\to K_1^{\bZ/k}(\bS^{2n+1})
  \end{equation}
  induced by $\phi_n$ in equivariant $K$-homology is nontrivial.
\end{theorem}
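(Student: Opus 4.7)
The plan is to descend to the quotient $\bar\phi_n : \bS^1 \to \bL_k^{2n+1}$ via \Cref{eq.lens_equiv_noequiv}, factor this through the 1-skeleton by cellular approximation, and then analyze the effect of the 1-skeleton inclusion on $K_1$ using the multiplicative structure of $R(\bZ/k)$ exposed in the proof of \Cref{le.k1-emb}.

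For the base case $n = 0$, the equivariance of $\phi_0$ forces the degree of $\bar\phi_0 : \bS^1 \to \bS^1$ to be congruent to $1 \pmod k$, hence nonzero, so $\phi_{0*}$ is multiplication by a nonzero integer on $K_1^{\bZ/k}(\bS^1) \cong \bZ$. For $n \geq 1$, cellular approximation writes $\bar\phi_n \simeq j \circ h$, where $j : \bL_k^1 \hookrightarrow \bL_k^{2n+1}$ is the inclusion of the 1-skeleton and $h : \bS^1 \to \bL_k^1 = \bS^1$ has some degree $m$. The equivariance of $\phi_n$ implies that the composite $\bZ = \pi_1(\bS^1) \to \pi_1(\bL_k^{2n+1}) = \bZ/k$ is surjective (as the pulled-back principal bundle must be the $k$-fold cover $\bS^1 \to \bS^1$), forcing $\gcd(m, k) = 1$; hence $\bar\phi_{n*}(g) = m \cdot j_*(g)$, where $g$ denotes the generator of $K_1(\bS^1)$.

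It therefore suffices to show that $j_*(g)$ is a nonzero torsion element in $K_1(\bL_k^{2n+1})$, since that torsion is $k$-primary (coming from $R(\bZ/k)/((1-\rho)^{n+1})$ through the universal coefficient theorem), so it cannot be annihilated by any integer coprime to $k$. Torsion-ness follows from the restriction $j^* : K^1(\bL_k^{2n+1}) \to K^1(\bL_k^1)$ being multiplication by $(1-\rho)^n$ on the distinguished generators (from the commutative diagram in the proof of \Cref{le.k1-emb}), which kills the element $1 + \rho + \cdots + \rho^{k-1}$ for $n \geq 1$; naturality of the universal coefficient theorem then places $j_*(g)$ inside $\mathrm{Ext}^1_\bZ(K^0(\bL_k^{2n+1}), \bZ)$. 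For non-vanishing of $j_*(g)$ we run a secondary induction on $n$: when $n = 1$, the long exact sequence of the pair $(\bL_k^3, \bL_k^1)$ together with the calculation $K_1(\bL_k^3) \cong \bZ \oplus \bZ/k$ identifies $j_*(g)$ as a generator of the torsion summand $\bZ/k$; for $n \geq 2$, non-vanishing propagates through the further equatorial inclusions $\bL_k^{2n-1} \hookrightarrow \bL_k^{2n+1}$ by \Cref{le.k1-emb}.

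I expect the main obstacle to be the base case $n = 1$ of this secondary induction, where \Cref{le.k1-emb} does not directly apply because the source group $K_1(\bL_k^1) = \bZ$ is torsion-free; there one must trace the generator explicitly through the pair long exact sequence, which in turn requires a separate computation of $K_*(\bL_k^3 / \bL_k^1)$.
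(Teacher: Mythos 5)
Your proposal follows essentially the same route as the paper: descend to lens spaces via \Cref{eq.lens_equiv_noequiv}, use cellular approximation and the coprimality of the degree forced by equivariance, reduce to the low-dimensional skeleton inclusion, and propagate nontriviality upward through \Cref{le.k1-emb}. The one step you flag as an obstacle --- showing $j_*(g)\neq 0$ for $\bL_k^1\hookrightarrow\bL_k^3$ --- is exactly where the paper works instead with the Atiyah--Hirzebruch spectral sequence for $\bL_k^3$: the natural edge inclusion $H_1(\bL_k^3)\hookrightarrow K_1(\bL_k^3)$ together with surjectivity of $H_1(\bL_k^1)\to H_1(\bL_k^3)\cong\bZ/k$ (coming from surjectivity on $\pi_1$) gives the nonvanishing without computing the connecting map of the pair $(\bL_k^3,\bL_k^1)$; your pair-sequence computation would also work, since that connecting map is multiplication by $k$, but the AHSS packaging avoids having to identify $K_*(\bL_k^3/\bL_k^1)$ explicitly.
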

\begin{proof}
We consider three cases.

{\bf Case I.}  Let $n = 0$, so $\phi_0: \bS^1 \to \bS^1$. The induced homomorphism $\phi_{0*}$ depends only on the equivariant homotopy class of $\phi_0$. The associated map $\psi_0:\bL_k^1\cong\bS^1 \to\bL_k^1\cong\bS^1$ on the lens spaces is homotopically equivalent to a map which sends $z \in \bS^1$ to $z^t$. Because $\bS^1\to\bL^1_k$ is a~covering map, we can lift the homotopy so that $\phi_0$ is $\bZ/k$-equivariantly homotopically equivalent to the map $z\mapsto z^t$ as well. Since $\phi_0$ is $\bZ/k$-equivariant, $t$ must be congruent to $1$ mod $k$. Now, $\phi_0$ has degree congruent to $1$ modulo $k$, so $\phi_0$ induces a one-to-one endomorphism on the homology $H_1(\bS^1)\cong\bZ$, and similarly on the homology $H_1(\bL_k^1)$. This fits into an inclusion $\bZ\cong H_1(\bS^1)\to H_1(\bL^1_k)\cong \bZ$ of index $k$. Since $\bL^1_k$ is one-dimensional, its $K_1$ group is isomorphic to $H_1(\bL^1_k)$ by \cite[Lemma A.4.1]{val-bc}. Thus, the induced map on $K_1(\bL^1_k)$ is nontrivial, and the conclusion follows by \Cref{eq.lens_equiv_noequiv}.

{\bf Case II.} Let $n = 1$, so $\phi_1: \bS^1 \to \bS^3$. The integral homology of $\bL^3_k$ reads
  \begin{equation*}
    (H_0,H_1,H_2,H_3) = (\bZ,\bZ/k,0,\bZ). 
  \end{equation*}
  It follows that the Atiyah--Hirzebruch spectral sequence~(see~\cite[Section 2]{atiyahhirzebruch}) 
  \begin{equation*}
    E^2_{p,q}:=H_p(\bL^3_k, K_q(\text{pt}))\Rightarrow K_{p+q}(\bL^3_k)
  \end{equation*}
collapses on the third page $E^3$, giving an exact sequence $0\to H_1(\bL^3_k)\to K_1(\bL^3_k)\to B\to 0$. Here $B$ is the kernel of the differential $d^3:H_3(\bL^3_k)\to H_0(\bL^3_k)$ of the spectral sequence. The torsion of $K_1(\bL^3_k)$ is thus naturally identified with $H_1(\bL^3_k)$, and the conclusion follows from the fact that our map $\bL^1_k \to \bL^3_k$ is nontrivial on fundamental groups (which are abelian) and hence on $H_1$. The result again follows from \Cref{eq.lens_equiv_noequiv}.

{\bf Case III.} Let $n \geq 2$. We claim that the map $\phi_n:\bS^1\to\bS^{2n+1}$ is $\bZ/k$-equivariantly homotopically equivalent to a map obtained by a composition
\[ \bS^1 \xrightarrow{\widetilde{\phi}} \bS^3 \hookrightarrow \bS^5 \hookrightarrow \cdots \hookrightarrow \bS^{2n + 1}, \]
where $\widetilde{\phi}$ is $\bZ/k$-equivariant and the other maps are given by equatorial embeddings. Indeed, by the cellular  approximation theorem \cite[Theorem~4.8]{hatcher}, $\psi_n:\bL_k^1\to\bL_k^{2n+1}$ is homotopic to a map
\[ \bL_k^1\xrightarrow{\widetilde{\psi}}\bL_k^3\hookrightarrow \bL_k^5\hookrightarrow\cdots\hookrightarrow\bL_k^{2n+1}.\]
Here we consider the standard CW-complex structure of $\bL_k^{2n+1}$. Since $\bS^{2n+1}\to\bL_k^{2n+1}$ is a covering map for all $n\geq 0$, we can lift the above homotopy to a $\bZ/k$-equivariant one. Case II shows $\widetilde{\phi}_*$ is nontrivial on $K^{\bZ/k}_1$, with nontrivial image in $\text{tor}(K^{\bZ/k}_1(\bS^3))$. Finally, the equatorial embeddings $\bS^3 \hookrightarrow \bS^5 \hookrightarrow \ldots \hookrightarrow \bS^{2n+1}$ induce embeddings on the torsion of $K_1^{\bZ/k}$ by \Cref{le.k1-emb}, so $\phi_n$ induces a~nontrivial map on $K_1^{\bZ/k}$.
\end{proof}

At last, we reach the desired result on the strong local-triviality dimension of $\cT$.

\begin{theorem}\label{th.tplz-inf}
The Toeplitz algebra has strong local-triviality dimension $\strivdim{\cT}{\bZ/k} = \infty$ for $k \geq 2$.
\end{theorem}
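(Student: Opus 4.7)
The plan is to argue by contradiction, combining the spectral subspace characterization of the strong local-triviality dimension with the $K$-homological obstruction established in \Cref{th.triv}.

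First, I would assume $\strivdim{\cT}{\bZ/k} = n < \infty$. By \Cref{pr.starkcounting}, there are normal commuting elements $a_0, \ldots, a_n$ in the $\gamma$-spectral subspace $\cT_\gamma$ with $\sigma(a_i) \subseteq \stars{k}$ and $\sum_{i=0}^n a_i a_i^* = 1$. Since $C(\bS^{2n+1})$ is the universal commutative unital $C^*$-algebra generated by normal commuting elements $z_1, \ldots, z_{n+1}$ with $\sum z_i z_i^* = 1$, the assignment $z_{i+1} \mapsto a_i$ extends to a unital $*$-homomorphism $\widetilde{\Psi}: C(\bS^{2n+1}) \to \cT$. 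This map is $\bZ/k$-equivariant because the $\bZ/k$-action on $\bS^{2n+1} \subseteq \bC^{n+1}$ by simultaneous multiplication by a primitive $k$th root of unity scales each coordinate $z_i$ by the character $\gamma$, matching the spectral subspace containing the $a_i$.

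Next, I would compose $\widetilde{\Psi}$ with the symbol map $\pi: \cT \to C(\bS^1)$ to obtain a $\bZ/k$-equivariant unital $*$-homomorphism $\Psi: C(\bS^{2n+1}) \to C(\bS^1)$, and let $\phi_n: \bS^1 \to \bS^{2n+1}$ denote its $\bZ/k$-equivariant Gelfand dual. Under the identification of $K_1^{\bZ/k}$ with equivalence classes of equivariant $C^*$-extensions by $\cK$ from \Cref{eq.equiv_class_equiv}, the pushforward $\phi_{n*}$ of the Toeplitz class $[\cT] \in K_1^{\bZ/k}(\bS^1)$ is computed by pulling the extension $0 \to \cK \to \cT \to C(\bS^1) \to 0$ back along $\Psi$, giving the equivariant extension
\begin{equation*}
0 \to \cK \to \cT \times_{C(\bS^1)} C(\bS^{2n+1}) \to C(\bS^{2n+1}) \to 0.
\end{equation*}
The existence of the equivariant lift $\widetilde{\Psi}$ means that $s \mapsto (\widetilde{\Psi}(s), s)$ is a $\bZ/k$-equivariant splitting of this pullback, so $\phi_{n*}([\cT]) = 0$ in $K_1^{\bZ/k}(\bS^{2n+1})$.

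Finally, \Cref{le.generator_extension} asserts that $[\cT]$ generates $K_1^{\bZ/k}(\bS^1) \cong \bZ$, so the vanishing above forces $\phi_{n*}$ itself to be zero, directly contradicting \Cref{th.triv}. Hence no finite $n$ can exist, and $\strivdim{\cT}{\bZ/k} = \infty$. The main obstacle I expect is the careful bookkeeping behind the naturality used in step two: one must verify that the BDF/Thomsen pushforward under $\phi_{n*}$ really is computed by the pullback of extensions, compatibly with the graph-algebra realization of $\cK$ matched to $\widetilde{\cK}_k$ in \Cref{le.samek}. Once that functoriality is nailed down, the contradiction is purely formal.
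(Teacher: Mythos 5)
Your proof is correct and follows essentially the same route as the paper: assume finiteness, use \Cref{pr.starkcounting} to build a $\bZ/k$-equivariant unital $*$-homomorphism $C(\bS^{2n+1})\to\cT$, compose with the symbol map, and observe that the existence of the equivariant lift forces the induced map on $K_1^{\bZ/k}$ to kill the generating Toeplitz class from \Cref{le.generator_extension}, contradicting \Cref{th.triv}. Your explicit pullback-extension argument for why the lift trivializes $\phi_{n*}([\cT])$ is just a spelled-out version of the step the paper compresses into ``by \Cref{le.generator_extension} and the discussion thereafter.''
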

\begin{proof}
Suppose $\strivdim{\cT}{\bZ/k} = n < \infty$. Then \Cref{pr.starkcounting} gives commuting, normal elements $a_0, \ldots, a_n$ in the spectral subspace associated to the generator $1 \in \bZ/k$, with $\sum_{j=0}^n a_i a_i^* = 1$. This gives a $\bZ/k$-equivariant unital \mbox{$*$-homo}\-morphism from $C(\bS^{2n+1})$ to $\cT$, hence also a $\bZ/k$-equivariant unital $*$-homomorphism \mbox{$\Psi: C(\bS^{2n+1}) \to C(\bS^1)$} which lifts through $\pi: \cT \to C(\bS^1)$. By \Cref{le.generator_extension} and the discussion thereafter, $\Psi$ induces a trivial map on $K_1^{\bZ/k}$. This contradicts \Cref{th.triv}.
\end{proof}

\subsubsection{Weak local-triviality dimension}

\begin{proposition}
The Toeplitz algebra has weak local-triviality dimension $\wtrivdim{\cT}{\bZ/2} = 1$.
\end{proposition}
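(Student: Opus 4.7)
The plan is to establish the two matching bounds $\wtrivdim{\cT}{\bZ/2}\le 1$ and $\wtrivdim{\cT}{\bZ/2}\ge 1$ separately; each is short once the right general result from the paper is invoked.

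For the upper bound, I would apply \Cref{pr.z2-fin} with $n=1$ and $Y_1=E^1$, where $E$ is the Toeplitz graph (one loop $L$ at vertex $w$ plus one edge from $w$ to the sink $u$). The two edges have distinct ranges, so $Y_1$ satisfies the hypothesis, and every vertex is covered: $r^{-1}(u)$ meets $Y_1$, while $s^{-1}(w)\subseteq Y_1$. Concretely this amounts to taking $V:=L+W$ (the standard isometry generator, with $V^*V=1$ and $1-VV^*=P_u$) and setting $a_0=\operatorname{Re}(V)$, $a_1=\operatorname{Im}(V)$. These are self-adjoint contractions in the $-1$ spectral subspace of the gauge $\bZ/2$-action, and
\[
a_0^2+a_1^2=\tfrac12(V^*V+VV^*)=\tfrac12(2-P_u)=1-\tfrac12 P_u,
\]
which is invertible. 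By \Cref{pr.starkcounting} this proves $\wtrivdim{\cT}{\bZ/2}\le 1$.

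For the lower bound, I would pass through the equivariant symbol map $\pi:\cT\to C(\bS^1)$, which is $\bZ/2$-equivariant and unital. Since unital equivariant $*$-homomorphisms send witnesses to witnesses, $\wtrivdim{C(\bS^1)}{\bZ/2}\le \wtrivdim{\cT}{\bZ/2}$, so it suffices to prove $\wtrivdim{C(\bS^1)}{\bZ/2}\ge 1$. If the weak dimension on the circle were $0$, then by \Cref{pr.starkcounting} there would exist a normal element $a\in C(\bS^1)_{-1}$ with $\sigma(a)\subseteq\stars{2}=[-1,1]$ and $a a^*$ invertible; that is, a self-adjoint nowhere-zero $f\in C(\bS^1,\bR)$ with $f(-z)=-f(z)$. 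The intermediate value theorem applied to any continuous arc from $z_0$ to $-z_0$ forces $f$ to have a zero, a contradiction. Together the two bounds yield $\wtrivdim{\cT}{\bZ/2}=1$.

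There is no real obstacle in this proof: both inequalities reduce to routine applications of results already established in the paper (\Cref{pr.z2-fin} and \Cref{pr.starkcounting}), combined with a one-dimensional Borsuk–Ulam argument pulled back along the Toeplitz extension. The only point worth being explicit about is the equivariance of $\pi$ together with the fact that invertible elements remain invertible under unital homomorphisms, which is what allows the lower bound on $C(\bS^1)$ to be transferred to $\cT$.
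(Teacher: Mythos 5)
Your proof is correct and follows essentially the same route as the paper: the upper bound via \Cref{pr.z2-fin} with $Y_1$ consisting of both edges, and the lower bound by pushing forward along the $\bZ/2$-equivariant symbol map $\pi:\cT\to C(\bS^1)$. The only difference is that you spell out the explicit witness $V=S_L+S_W$ and supply the intermediate-value-theorem argument for $\wtrivdim{C(\bS^1)}{\bZ/2}\ge 1$, which the paper simply asserts.
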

\begin{proof}
\Cref{pr.z2-fin} applies to the graph representation of $\cT$, where the single set $Y_1$ contains both edges. It follows that $\wtrivdim{\cT}{\bZ/2}\leq 2 \cdot 1 - 1 = 1$. Since $\pi:\cT\to C(\bS^1)$ is $\bZ/2$-equivariant for the antipodal action on $C(\bS^1)$, and $\wtrivdim{\bS^1}{\bZ/2}=1$, we have $\wtrivdim{\cT}{\bZ/2} \geq 1$.
\end{proof}

%%%%%%%%%%%%%%%%%%%%%%%%%%%%%%%%%%%%%%%%%%%%%%%%%%%%%%%%%%%%%%%%%%%%%%%%%%%%%%%%%%%%%%
\subsection{Cycles}\label{subse.cyc}

The graph $C^*$-algebra of the length $n$ cycle $E = \cC_n$ is isomorphic to $M_n(C(\bS^1))\cong C(\bS^1)\otimes M_n$. 
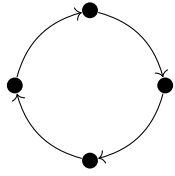
\begin{figure}[h!]
\begin{center}
\begin{tikzpicture}[auto,swap]
\tikzstyle{vertex}=[circle,fill=black,minimum size=6pt,inner sep=0pt]
\tikzstyle{edge}=[draw,->]
\tikzset{every loop/.style={min distance=20mm,in=130,out=50,looseness=50}}
    \node[vertex] (1) at (0,1) {};
    \node[vertex] (2) at (1,0) {};
    \node[vertex] (3) at (0,-1) {};
    \node[vertex] (4) at (-1,0) {};
    \path (1) edge [edge, bend left] node {} (2);
    \path (2) edge [edge, bend left] node {} (3);
    \path (3) edge [edge, bend left] node {} (4);
    \path (4) edge [edge, bend left] node {} (1);
\end{tikzpicture}
\end{center}
\caption{Graph representation of $M_4(C(\bS^1))$.}
\end{figure}

\noindent Let $v_1, \ldots, v_n$ be the vertices of $\cC_n$, with $e_i$ the edge originating at $v_i$ and $S_i$ the corresponding partial isometry. For $k\ge 2$, the gauge $\bZ/k$-action scales $S_i$ by $\omega=\exp(2\pi i/k)$ and is the tensor product of two actions:
\begin{itemize}
\item rotation by $\frac{2 n \pi i}{ k}$ (or multiplication by $\omega^n$) on $C(\bS^1)$, and 
\item the gauge $\bZ/k$-action on $M_n$, realized as the graph $C^*$-algebra of a length $n - 1$ path.  
\end{itemize}
This claim follows from the description of $C^*(\cC_n)$ given in \cite[Example 2.14]{raeburn05}. We therefore begin with a proposition concerning diagonal actions on $C(X) \otimes M_n$.

\begin{proposition}\label{pr.dimX1}
  Let $X$ be a compact connected space equipped with a free $\bZ/k$-action such that $\trivdim{C(X)}{\bZ/k}= 1$. Let $\bZ/k$ also act by the gauge action of $M_n$ from its standard graph presentation. Then we have the following possibilities for the diagonal action of $\bZ/k$ on $C(X) \otimes M_n$.
  \begin{itemize}
  \item If $k \, | \, n$, then all local-triviality dimensions are $0$.
  \item If $k \nmid n$, then all local-triviality dimensions are $1$.
  \end{itemize}
\end{proposition}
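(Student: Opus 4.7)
The plan is to handle each case by pairing a tensor-product upper bound with an obstruction-based lower bound, using only tools from the preceding sections.

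For the case $k \mid n$, the gauge $\bZ/k$-action on the standard graph presentation of $M_n$ has vanishing plain (and hence strong) local-triviality dimension by \Cref{prop.mat}. Applying \Cref{le.tens-dim} with the $M_n$ tensor factor gives $\strivdim{C(X)\otimes M_n}{\bZ/k} \le 0$, and \Cref{re.w0} then forces all three local-triviality dimensions to vanish simultaneously.

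For the case $k \nmid n$, the upper bound is another instance of \Cref{le.tens-dim}, this time applied to the $C(X)$ factor: since $C(X)$ is commutative, the plain and strong local-triviality dimensions coincide by the final paragraph of the proof of \Cref{pr.sdimclassical}, so $\strivdim{C(X)}{\bZ/k} = \trivdim{C(X)}{\bZ/k} = 1$, giving $\strivdim{C(X)\otimes M_n}{\bZ/k}\le 1$ and hence the same bound for the weak and plain dimensions.

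The crux of the argument, which I expect to be the main obstacle, is the lower bound $\wtrivdim{C(X)\otimes M_n}{\bZ/k}\ge 1$ when $k\nmid n$. By \Cref{re.w0} it suffices to rule out $\strivdim{C(X)\otimes M_n}{\bZ/k}=0$. Vanishing of the strong dimension produces a $\bZ/k$-equivariant unital $*$-homomorphism $C(\bZ/k)\to C(X)\otimes M_n$, equivalently a unitary $u\in M_n(C(X))$ with $u^k=1$ transforming by $\omega=\exp(2\pi i/k)$ under the diagonal action. Writing the $M_n$ factor of that action as conjugation by $D=\mathrm{diag}(1,\omega,\ldots,\omega^{n-1})$, the equivariance reads $D\,u(\sigma^{-1}x)\,D^{-1}=\omega\,u(x)$ pointwise on $X$, where $\sigma$ generates $\bZ/k$. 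Passing to determinants eliminates the conjugation by $D$ and yields $\det u(\sigma^{-1}x) = \omega^n \det u(x)$. Since $(\det u)^k=\det(u^k)=1$, the continuous function $\det u$ takes values in the discrete set of $k$-th roots of unity, so connectedness of $X$ forces it to be a constant $c\in\bS^1$. The relation $c=\omega^n c$ with $c\ne 0$ then forces $\omega^n=1$, contradicting $k\nmid n$ and completing the proof.
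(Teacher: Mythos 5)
Your proposal is correct, and the case $k \mid n$ together with the upper bound for $k \nmid n$ coincide with the paper's argument (tensor-factor bounds via \Cref{le.tens-dim}, \Cref{prop.mat}, and \Cref{re.w0}). The difference lies in how you rule out dimension $0$ when $k \nmid n$. The paper works with the dual picture of $C(\bZ/k)$: it pushes the $k$ minimal projections of $C(\bZ/k)$ forward to projection-valued functions $p_1,\ldots,p_k$ on $X$ summing to $I_n$, uses connectedness of $X$ to make each rank constant, and uses the cyclic permutation coming from equivariance to force all ranks equal, whence $k\mid n$. You instead work with the generating order-$k$ unitary $u$ and take determinants: the conjugation by $D$ drops out, $(\det u)^k=1$ plus connectedness makes $\det u$ a constant $c\ne 0$, and the transformation law forces $\omega^n=1$. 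The two arguments are close cousins (trace of a projection versus determinant of a unitary, each turning the equivariance into a numerical constraint), but yours is arguably the cleaner computation and makes visible exactly where $\omega^n$ enters --- which dovetails with the later observation that the gauge action on $C^*(\cC_n)$ rotates the $C(\bS^1)$ factor by $\omega^n$. The paper's rank argument has the mild advantage that it adapts directly to the disconnected, prime-$k$ refinement of \Cref{pr.dimX1-prime}, where one tracks the multiset of eigenvalues rather than a single scalar; your determinant collapses that multiset to its product and loses the information needed there.
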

\begin{proof}
If $k \, | \, n$, then \Cref{le.tens-dim} implies $\trivdim{C(X) \otimes M_n}{\bZ/k}$ is at most $\trivdim{M_n}{\bZ/k}$, which is zero by \Cref{prop.mat}. By \Cref{re.w0}, all local-triviality dimensions of the action are zero.

Conversely, assume $\trivdim{C(X) \otimes M_n}{\bZ/k} = 0$, so there is a unital equivariant $*$-homomorphism from $C(\bZ/k)$ to $C(X) \otimes M_n \cong C(X, M_n)$. Composing this map with evaluation at an arbitrary point $x \in X$ gives a unital $*$-homomorphism from $C(\bZ/k)$ to $M_n$, which itself gives projections $p_1(x), \ldots, p_k(x) \in M_n$ with sum $I_n$. Since $X$ is connected and the action of $\bZ/k$ induces a cyclic permutation of the projections, it follows that the projections $p_1(x), \ldots, p_k(x)$ have the same trace, hence the same rank. Therefore, $k \, | \, n$.
\end{proof}

\begin{corollary}
  For the gauge action of $\bZ/k$ on $A=C^*(\cC_n)$, we have the following possibilities.
  \begin{itemize}
  \item If $k\, | \, n$, then all local-triviality dimensions are $0$.
  \item If $k \nmid n$, then all local-triviality dimensions are $1$.
  \end{itemize}
\end{corollary}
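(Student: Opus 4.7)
My plan starts from the factorization of the gauge $\bZ/k$-action on $C^*(\cC_n)\cong C(\bS^1)\otimes M_n$ as rotation by $\omega^n$ on $C(\bS^1)$ tensored with the gauge $\bZ/k$-action on $M_n$. When $k\mid n$, the rotation factor is trivial, so the diagonal action reduces to $\operatorname{id}\otimes(\text{gauge on }M_n)$; \Cref{le.tens-dim} combined with \Cref{prop.mat} gives $\trivdim{C^*(\cC_n)}{\bZ/k}\leq 0$, and \Cref{re.w0} upgrades this to all three local-triviality dimensions vanishing.

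For the case $k\nmid n$, I would establish the lower bound by reproducing the rank argument from the proof of \Cref{pr.dimX1}. A vanishing local-triviality dimension would give (by \Cref{re.w0}) a unital $\bZ/k$-equivariant $*$-homomorphism $C(\bZ/k)\to C(\bS^1, M_n)$, hence projection-valued continuous maps $p_0,\ldots,p_{k-1}\colon \bS^1\to M_n$ with $\sum_i p_i(z)=I_n$ satisfying the cyclic-permutation relation $p_{i+1}(z)=D^{-1}p_i(\omega^n z)D$. Connectedness of $\bS^1$ forces each $\operatorname{rank} p_i(z)$ to be constant in $z$, and similarity preserves rank, so all these ranks coincide; summing yields $k r_0 = n$, contradicting $k\nmid n$. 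Hence every local-triviality dimension is at least $1$.

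For the matching upper bound, I would use the unitary $W:=\sum_{i=1}^{n}S_{e_i}\in C^*(\cC_n)$, which sits in $A_1$ (as each edge is scaled by $\omega$) and has $W^k\in A_0$. After choosing continuous $f_0, f_1\colon \bS^1\to\bC$ with $|f_0|^2+|f_1|^2\equiv 1$ and $f_i(w)^k w\in[0,1]$ for each $w\in\bS^1$, I would set $a_i:=f_i(W^k)W$. Each $a_i$ lies in $A_1$, is normal (since $f_i(W^k)$ commutes with $W$), the pair commutes (both are functional-calculus expressions in the single normal element $W$), one computes $\sigma(a_i^k)=\{f_i(w)^k w : w\in\bS^1\}\subseteq[0,1]$ so that $\sigma(a_i)\subseteq\stars{k}$ by spectral mapping, and $a_0 a_0^*+a_1 a_1^*=(|f_0|^2+|f_1|^2)(W^k)=1$. \Cref{pr.starkcounting} then delivers $\strivdim{C^*(\cC_n)}{\bZ/k}\leq 1$, which combined with \cref{eq:thecomparison} and the lower bound closes the case.

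The principal obstacle is constructing the $f_i$ continuously: the phase prescription $f(w)=|f(w)|e^{-i\arg(w)/k}$ needed to realize $f(w)^k w\in[0,1]$ has a branch cut on $\bS^1$, so each $f_i$ must vanish at some point of $\bS^1$. A concrete remedy is $f_0(e^{i\phi})=\sin(\phi/2)e^{-i\phi/k}$ on $\phi\in[0,2\pi]$ and $f_1(e^{i\phi})=\cos(\phi/2)e^{-i\phi/k}$ on $\phi\in[-\pi,\pi]$, whose zero sets $\{1\}$ and $\{-1\}$ are disjoint and which yield $|f_0|^2+|f_1|^2=\sin^2(\phi/2)+\cos^2(\phi/2)\equiv 1$ together with $f_i(w)^k w = (\sin^k\text{ or }\cos^k)(\phi/2)\in[0,1]$.
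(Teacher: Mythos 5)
Your proof is correct, and while your treatment of the $k\mid n$ case and your lower bound for $k\nmid n$ coincide with the paper's (the paper likewise reduces $k\mid n$ to \Cref{le.tens-dim} plus \Cref{prop.mat}, and its \Cref{pr.dimX1} excludes dimension zero by exactly your rank-counting argument on the projections coming from a unital equivariant map out of $C(\bZ/k)$), your upper bound for $k\nmid n$ takes a genuinely different route. The paper gets $\strivdim{C^*(\cC_n)}{\bZ/k}\le 1$ by applying \Cref{le.tens-dim} to the \emph{circle} tensor factor, i.e.\ by asserting $\trivdim{C(\bS^1)}{\bZ/k}=1$ for the rotation-by-$\omega^n$ action and invoking \Cref{pr.dimX1}; you instead manufacture the witnessing pair inside $C^*(\cC_n)$ itself from the shift unitary $W=\sum_i S_{e_i}$. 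Your route buys something real: the paper's justification of $\trivdim{C(\bS^1)}{\bZ/k}=1$ rests on an equivariant map $\bS^1\to E_1\bZ/k$ into a free $\bZ/k$-space, which exists only when rotation by $\omega^n$ is itself a free $\bZ/k$-action, i.e.\ when $\gcd(n,k)=1$ (and \Cref{pr.dimX1} also assumes freeness on $X$); when $1<\gcd(n,k)<k$, as for $k=4$, $n=2$, that hypothesis fails and neither tensor factor yields a finite bound via \Cref{le.tens-dim}, whereas your construction $a_i=f_i(W^k)W$ is insensitive to $\gcd(n,k)$ and covers all cases uniformly. The construction itself checks out: $W$ is unitary because the edges of a cycle have pairwise distinct sources and pairwise distinct ranges, $C^*(W,1)$ is commutative so the $a_i$ are normal and commute, the condition $f_i(w)^k w\in[0,1]$ forces $\sigma(a_i)\subseteq\stars{k}$ by spectral mapping, and your explicit $f_0,f_1$ are continuous with $|f_0|^2+|f_1|^2\equiv 1$, so \Cref{pr.starkcounting} applies. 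The only cosmetic caveat is that the precise form of the cyclic-permutation relation among the projections in your lower bound depends on sign and direction conventions for the action, but all you use is that conjugation and translation preserve rank, so nothing is affected.
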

\begin{proof}
There are no nontrivial projections in $C(\bS^1)$, so for any action of a nontrivial finite group $\Gamma$ on $C(\bS^1)$, $\trivdim{C(\bS^1)}{\Gamma} > 0$. On the other hand, there is an equivariant unital $*$-homomorphism $C(E_1 \, \bZ / k) \to C(\bS^1)$ dual to a map $\bS^1 \to E_1 \, \bZ/k$, so $\trivdim{C(\bS^1)}{\bZ/k} \leq 1$. Therefore, the assumption $\trivdim{C(\bS^1)}{\bZ/k} = 1$ of \Cref{pr.dimX1} holds, and that result implies the stated conclusion.
\end{proof}

If $k$ is prime, then $X$ need not be assumed connected in \Cref{pr.dimX1}.

\begin{proposition}\label{pr.dimX1-prime}
Suppose $k$ is prime, and $X$ is a compact space equipped with a free $\bZ/k$-action such that $\trivdim{C(X)}{\bZ/k}= 1$. Let $\bZ/k$ also act by the gauge action of $M_n$ from its standard graph presentation. Then we have the following possibilities for the diagonal action of $\bZ/k$ on $C(X) \otimes M_n$.
  \begin{itemize}
  \item If $k \, | \, n$, then all local-triviality dimensions are $0$.
  \item If $k \nmid n$, then all local-triviality dimensions are $1$.
  \end{itemize}
\end{proposition}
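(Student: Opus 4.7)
The plan is to repeat the strategy of the proof of \Cref{pr.dimX1}, replacing the use of connectedness with primality of $k$. When $k \mid n$, I would apply \Cref{le.tens-dim} together with \Cref{prop.mat} to obtain $\trivdim{C(X)\otimes M_n}{\bZ/k}\le\trivdim{M_n}{\bZ/k}=0$, and then invoke \Cref{re.w0} to force all three local-triviality dimensions to vanish; this portion of the argument is unchanged from \Cref{pr.dimX1}. When $k\nmid n$, the upper bound $\trivdim{C(X)\otimes M_n}{\bZ/k}\le\trivdim{C(X)}{\bZ/k}=1$ is immediate from \Cref{le.tens-dim} applied to the $C(X)$ factor, with a parallel statement for the strong dimension.

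The substantive step is the lower bound: I want to show that $\wtrivdim{C(X)\otimes M_n}{\bZ/k}\ge 1$ when $k\nmid n$. Suppose for contradiction that this weak dimension were $0$. By \Cref{re.w0} there is an equivariant unital $*$-homomorphism $C(\bZ/k)\to C(X,M_n)$, equivalently orthogonal projections $p_0,\ldots,p_{k-1}\in C(X,M_n)$ summing to $1$ and cyclically permuted by the diagonal action. My idea is to apply the normalized trace $\tau$ of $M_n$ fiberwise, producing locally constant functions $q_j(x):=\tau(p_j(x))=\mathrm{rank}(p_j(x))/n$ on $X$. Since the gauge action on $M_n$ is conjugation by a diagonal unitary, it preserves $\tau$, so the $q_j$ are still cyclically permuted by the $\bZ/k$-action on $X$, sum to $1$, and take values in the finite set $\{0, 1/n, \ldots, 1\}$.

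The heart of the argument, and where primality of $k$ enters, is to analyze the tuple map $\vec q=(q_0,\ldots,q_{k-1}):X\to \bQ^k$. Its image $T$ is a finite subset of $\bQ^k$ that is closed under cyclic shift. The only fixed point of cyclic shift on the standard simplex is the center $(1/k,\ldots,1/k)$, and this point fails to lie in $T$: the entries of tuples in $T$ are fractions with denominator $n$, and $k\nmid n$ guarantees that $1/k$ is not such a fraction. Hence $\bZ/k$ has no fixed points on $T$, and for prime $k$ this is equivalent to the action on $T$ being free, so $T$ splits into orbits of size exactly $k$. Choosing one representative per orbit defines a $\bZ/k$-equivariant map $\phi:T\to\bZ/k$, and the composition $\phi\circ\vec q:X\to\bZ/k$ is $\bZ/k$-equivariant and continuous. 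Dually, this yields an equivariant unital $*$-homomorphism $C(\bZ/k)\to C(X)$, and \Cref{re.w0} then gives $\trivdim{C(X)}{\bZ/k}=0$, contradicting the hypothesis $\trivdim{C(X)}{\bZ/k}=1$.

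The most delicate step is verifying that $T$ really avoids the center of the simplex: it boils down to the arithmetic fact that the only way a sum $r_0+\cdots+r_{k-1}=n$ of nonnegative integers can produce the tuple $(n/k,\ldots,n/k)$ is when $k\mid n$. Once this is in place, primality of $k$ supplies the remaining ingredient, namely the implication ``no fixed points $\Rightarrow$ free action'' on a finite $\bZ/k$-set, which is precisely what is lost without the connectedness hypothesis of \Cref{pr.dimX1}.
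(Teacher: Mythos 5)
Your proof is correct and takes essentially the same approach as the paper's: there, the invariant attached to a point $x$ is the multiset of eigenvalues of the order-$k$ unitary $T_x$ witnessing dimension zero, whose multiplicities are exactly your ranks $n\,q_j(x)$, so the two arguments record the same data. Both then use $k \nmid n$ to exclude the balanced configuration and primality of $k$ to conclude that $X$ maps continuously and equivariantly onto a finite \emph{free} $\bZ/k$-set, contradicting $\trivdim{C(X)}{\bZ/k} = 1$.
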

\begin{proof}
The case when $k \, | \, n$ proceeds exactly as in \Cref{pr.dimX1}, so suppose $k \nmid n$. Set $A=C(X) \otimes M_n$, which consists of continuous maps $X\ni z\mapsto T_z\in M_n$. If $\omega = \exp(2 \pi i / k)$, then to say $T$ is in the $\omega$ spectral subspace means that $\omega T_{\omega^{-1} z} = UT_z U^{-1}$, where $U = \operatorname{diag}(\omega^i)_{i=1}^n$.

Suppose that $A$ has local-triviality dimension zero, so there is a unitary $T\in A$ with $\omega T_{\omega^{-1} z} = UT_z U^{-1}$ and $\sigma(T) \subseteq \{1, \omega, \ldots, \omega^{k-1}\}$. Let $\psi$ denote the function which sends each $z \in X$ to the spectrum of $T_z$, an $n$-\textit{multiset} of points in $\bS^1$. Equip the collection of $n$-multisets of $\bS^1$ with the action of $\bZ/k$ which simultaneously scales each element. It follows that $\psi$ is $\bZ/k$-equivariant.

  Because $k \nmid n$, $\psi(z)$ cannot contain all $k$th roots of unity with equal multiplicities. It follows from the primality of $k$ that $\bZ/k$ acts freely on the range of $\psi$, so we have a continuous $\bZ/k$-equivariant map from $X$ to a discrete free $\bZ/k$-space (consisting of unordered $n$-tuples of order $k$ roots of unity). This implies that $\trivdim{C(X)}{\bZ/k} = 0$, contradicting the hypothesis.
\end{proof}

If $k$ is composite and $X$ is not connected, one may not reach the same conclusions as in the previous propositions. Consider $k = 4$, $n = 2$, and $X = \bS^1 \sqcup \bS^1$. Let $\bZ/4$ act on $X$ in such a way that the generator $\sigma$ swaps the two circles and also implements a $\pi/4$ rotation. Since $\sigma^2$ generates the antipodal action, we may compute that $\trivdim{C(X)}{\bZ/4} = 1$. Even though $4$ is certainly not a divisor of $2$, one may show $\trivdim{C(X) \otimes M_2}{\bZ/4} = 0$ by constructing an appropriate element in the spectral subspace associated to the generator $\sigma$.

%%%%%%%%%%%%%%%%%%%%%%%%%%%%%%%%%%%%%%%%%%%%%%%%%%%%%%%%%%%%%%%%%%%%%%%%%%%%%%%%%%%%%%
\subsection{Antipodal actions on quantum spheres}

Consider the {\em Vaksman--Soibelman quantum spheres} $C(S^{2n-1}_q)$. For any $q \in (0, 1)$ and $n \in \mathbb{Z}^+$, $C(S^{2n-1}_q)$ may be written as the universal $C^*$-algebra generated by $z_1, \ldots, z_n$ subject to the following relations, as in \cite[Proposition 4.1]{vs90}. In what follows, $i$ and $j$ belong to $\{1, \ldots, n\}$.

\vspace{.15 in}

\begin{itemize}
\item If $i < j$, then $z_j z_i = q \, z_i z_j$.
\item If $i \not= j$, then  $z_j^* z_i = q \, z_i z_j^*$.
\item For all $i$, $z_i^* z_i = z_i z_i^* + (1 - q^2)\sum\limits_{j > i} z_j z_j^*$.
\item $z_1 z_1^* + \ldots + z_n z_n^* = 1$.
\end{itemize}

\vspace{.15 in}

\noindent In \cite[Theorem 4.4]{hs-sph}, Hong and Szyma\'nski showed that each $C(S^{2n-1}_q)$ is a graph $C^*$-algebra, with a graph independent of $q$. Further, the even sphere $C(S^{2n}_q)$ (the quotient of $C(S^{2n+1}_q)$ that requires $z_{n+1} = z_{n+1}^*$) also admits a $q$-independent graph presentation from \cite[Proposition 5.1]{hs-sph}.

Each quantum sphere $C(S^k_q)$ admits an \textit{antipodal} $\bZ/2$-\textit{action} given on the generators by $z_j\mapsto -z_j$ for each $1 \leq j \leq n$. Yamashita used equivariant $KK$-theory to prove the noncommutative Borsuk-Ulam theorem \cite[Corollary 15]{qdeformed}, which states that if $k < l$, then there is no $\bZ/2$-equivariant unital $*$-homomorphism from $C(S^k_q)$ to $C(S^l_q)$. In this subsection, we bound the local-triviality dimensions of quantum spheres using their presentations as graph $C^*$-algebras. Note that the classical Borsuk-Ulam theorem is equivalent to the claim that $\trivdim{C(S^k)}{\bZ/2} = k$, and computation of the (weak) local-triviality dimension for noncommutative spheres was posed as a problem, in different language, as early as \cite[Question 3]{taghavi}.

If $n$ is fixed, then $C(S^{2n-1}_q)$ is isomorphic to the graph $C^*$-algebra of a graph having $n$ vertices $v_1, \ldots, v_n$, and edges $e_{ij}$ from $v_i$ to $v_j$ for all $1 \leq i\leq j \leq n$. The antipodal action of $C(S^{2n-1}_q)$ is the gauge $\bZ/2$-action for this graph, regardless of $q$.

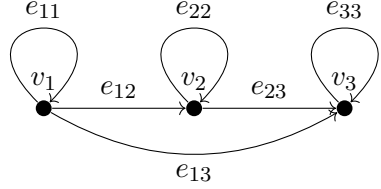
\begin{figure}[h!]
\centering
\begin{tikzpicture}
\tikzstyle{vertex}=[circle,fill=black,minimum size=6pt,inner sep=0pt]
\tikzstyle{edge}=[draw,->]
\tikzstyle{loop}=[draw,->,min distance=20mm,in=130,out=50,looseness=50]
% vertices
\node[vertex,label=above:$v_1$] (a) at  (0,0) {};
\node[vertex,label=above:$v_2$] (b) at (2, 0) {};
\node[vertex,label=above:$v_3$] (c) at (4, 0) {};
%edges
\path (a) edge [edge, in = 45, out = 135, looseness = 30] node[above] {$e_{11}$} (a);
\path (a) edge[edge] node[above] {$e_{12}$} (b);
\path (b) edge [edge, in = 45, out = 135, looseness = 30] node[above] {$e_{22}$} (b);
\path (b) edge[edge] node[above] {$e_{23}$} (c);
\path (c) edge [edge, in = 45, out = 135, looseness = 30] node[above] {$e_{33}$} (c);
\path (a) edge[edge,bend right] node[below] {$e_{13}$} (c);
\end{tikzpicture}
\caption{Graph representation of $C(S^5_q)$.}
\end{figure}

\begin{proposition}\label{oddsphere}
The antipodal $\bZ/2$-action on $C(S^{2n-1}_q)$ has $\trivdim{C(S^{2n-1}_q)}{\bZ/2} \leq 2n-1$.
\end{proposition}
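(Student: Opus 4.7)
The plan is to apply \Cref{th.z2-lt} directly to the Hong--Szymański graph presentation of $C(S^{2n-1}_q)$, which has $n$ vertices and exactly one edge $e_{ij}$ for each $1 \leq i \leq j \leq n$. The hypotheses of the theorem are immediate from the picture: every vertex $v_i$ carries the loop $e_{ii}$, so there are no sinks, and distinct edges never share both source and range since there is at most one edge between any ordered pair of vertices. The conclusion of \Cref{th.z2-lt} is precisely $\trivdim{C^*(E)}{\bZ/2} \leq 2|E^0| - 1 = 2n - 1$, so the only nontrivial task is to verify its row-space hypothesis.

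For that, I would write down the adjacency matrix explicitly. Indexing rows and columns by $v_1, \dots, v_n$, the matrix $A_E$ is the upper-triangular all-ones matrix (including the diagonal), so $I + A_E$ has the entry $2$ on the diagonal, $1$ strictly above the diagonal, and $0$ strictly below. To express $(1, 1, \ldots, 1)$ as $\sum_i \chi_i r_i$ with $r_i$ the $i$th row of $I + A_E$, the equation in the $j$th column reads
\[
2\chi_j + \sum_{i<j} \chi_i \;=\; 1.
\]
Solving recursively yields $\chi_j = 2^{-j}$, which are all strictly positive; this is the nonnegative combination demanded by \Cref{th.z2-lt}.

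With the hypotheses of \Cref{th.z2-lt} verified, the desired bound follows at once. Since this is essentially a direct verification, there is no real obstacle; the only subtlety is correctly reading off the adjacency matrix of the Hong--Szymański graph and checking that the resulting triangular linear system admits a positive solution, which it does in closed form.
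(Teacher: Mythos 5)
Your proposal is correct and follows the paper's proof exactly: both verify the hypotheses of \Cref{th.z2-lt} for the Hong--Szyma\'nski graph and express $\begin{pmatrix}1 & 1 & \cdots & 1\end{pmatrix}$ as $\sum_{i=1}^n 2^{-i}\,\mathrm{Row}_i(I+A_E)$. Your explicit derivation of the coefficients $\chi_j = 2^{-j}$ from the triangular system is just a slightly more detailed write-up of the same computation.
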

\begin{proof}
The adjacency matrix $A_E$ is upper triangular, with $1$ on and above the diagonal, and $E$ has no sinks. Since $\begin{pmatrix} 1 & 1 &  \cdots & 1 \end{pmatrix} = \sum\limits_{i=1}^n 2^{-i} \,\, \text{Row}_i(I + A_E)$, the result follows from \Cref{th.z2-lt}.
\end{proof}

Next, consider the even sphere $C(S^{2n}_q)$. The graph presentation from \cite[Proposition 5.1]{hs-sph} has vertices $v_1,v_2,\ldots,v_n$ and $w_1,w_2$, edges $e_{ij}$ from $v_i$ to $v_j$ for all $1 \leq i\leq j \leq n$, and edges $f_{ik}$ from $v_i$ to $w_k$ for all $1 \leq i \leq n$ and $k \in \{1, 2\}$. For brevity, we will write $P^i:=P_{v_i}$, $P_k:=P_{w_k}$, $S^{ij}:=S_{e_{ij}}$, and $S^i_k:=S_{f_{ik}}\,$.  In this case, the antipodal action on $C(S^{2n}_q)$ does \textit{not} correspond to the gauge action, but to the following map:
\[
P^i\mapsto P^i,\qquad P_k\mapsto P_{3-k}\,,\qquad S^{ij}\mapsto -S^{ij},\qquad S^i_k\mapsto -S^i_{3-k}\,,
\]
where $i \in \{1,2,\ldots, n\}$, $j \in \{1,2,\ldots,n\}$, and $k \in \{1,2\}$. Although we are no longer considering the gauge action, we may apply a similar strategy as for \Cref{le.rowcolsumZ2} to obtain a bound for the plain local-triviality dimension.

\begin{figure}[h!]
\centering
\begin{tikzpicture}
\tikzstyle{vertex}=[circle,fill=black,minimum size=6pt,inner sep=0pt]
\tikzstyle{edge}=[draw,->]
\tikzset{every loop/.style={min distance=20mm,in=130,out=50,looseness=50}}
    \node[vertex,label=above:$v_1$] (1) at (-2,0) {};
    \node[vertex,label=above:$v_2$] (2) at (0,0) {};
    \node[vertex,label=above:$v_3$] (3) at (2,0) {};
    \node[vertex,label=below:$w_1$] (4) at (-2,-2) {};
    \node[vertex,label=below:$w_2$] (5) at (2,-2) {};
    \path (1) edge [edge, in = 45, out = 135, looseness = 30] node[above] {$e_{11}$} (1);
    \path (2) edge [edge, in = 45, out = 135, looseness = 30] node[above] {$e_{22}$} (2);
    \path (3) edge [edge, in = 45, out = 135, looseness = 30] node[above] {$e_{33}$} (3);
    \path (1) edge [edge] node[above] {$e_{12}$} (2);
    \path (1) edge [edge,bend right,looseness = 1.2] node[above] {$e_{13}$} (3);
    \path (1) edge [edge] node[left] {$f_{11}$} (4);
    \path (1) edge [edge] node[below right=0.5] {$f_{12}$} (5);
    \path (2) edge [edge] node[above] {$e_{23}$} (3);
    \path (2) edge [edge] node[left] {$f_{21}$} (4);
    \path (2) edge [edge] node[right] {$f_{22}$} (5);
    \path (3) edge [edge] node[below left=0.5] {$f_{31}$} (4);
    \path (3) edge [edge] node[right] {$f_{32}$} (5);
\end{tikzpicture}
\caption{Graph representation of $C(S^6_q)$.}
\end{figure}
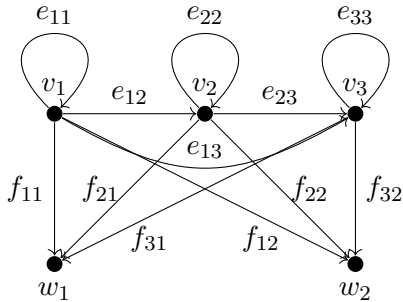

\begin{proposition}\label{evensphere}
The antipodal $\bZ/2$-action on $C(S^{2n}_q)$ has $\dim_{\rm LT}^{\bZ/2}(C(S^{2n}_q))\leq 2n$.
\end{proposition}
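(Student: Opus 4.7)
My plan is to construct $2n+1$ self-adjoint contractions $a_0, \dots, a_{2n}$ in the $(-1)$-spectral subspace $A_{-1}$ of $A = C(S^{2n}_q)$ whose squares sum to $1$, and then invoke \Cref{pr.starkcounting}. Recall that for $k=2$, the condition $\sigma(a_i)\subseteq\stars{2}$ is just self-adjointness together with $\|a_i\|\le 1$. The twist compared to the odd case in \Cref{oddsphere} is that the antipodal action is no longer the gauge action: it negates each $S^{ij}$ just as the gauge action would, but it also \emph{swaps} the sink projections $P_1\leftrightarrow P_2$ and correspondingly permutes $S^i_1\leftrightarrow -S^i_2$. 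A direct appeal to \Cref{le.rowcolsumZ2} fails because the row-space condition on $I+A_E$ cannot be met at the sink rows (the natural weights $\chi_{v_i}=2^{-i}$ give column sum $1-2^{-n}$ at $w_1$ and $w_2$). The key idea is that the sink swap itself furnishes the missing element.

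For the $v$-part I would reuse the edge-grouping from the proof of \Cref{le.rowcolsumZ2}. For $l=1,\dots,n$, set $Y_l:=s^{-1}(v_l)=\{e_{lj}\mid l\le j\le n\}\cup\{f_{l1},f_{l2}\}$; the ranges $v_l,\dots,v_n,w_1,w_2$ of the edges in $Y_l$ are pairwise distinct. Define
\[
B_l:=2^{(1-l)/2}\Bigl(\,\sum_{j=l}^{n}S^{lj}+S^{l}_1+S^{l}_2\Bigr).
\]
Under the antipodal action $S^{lj}\mapsto -S^{lj}$ and $S^{l}_1+S^{l}_2\mapsto -(S^{l}_1+S^{l}_2)$, so $B_l\in A_{-1}$. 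Because the edges in $Y_l$ have distinct ranges, the cross terms in $B_lB_l^*$ vanish, while cross terms in $B_l^*B_l$ vanish by distinctness of the edges; combined with the Cuntz--Krieger relation $P^l=\sum_{e\in Y_l}S_eS_e^*$ this gives
\[
B_lB_l^*=2^{1-l}P^l,\qquad B_l^*B_l=2^{1-l}\bigl(P^l+P^{l+1}+\cdots+P^n+P_1+P_2\bigr).
\]
A telescoping argument using $2\cdot 2^{-j}+\sum_{l<j}2^{-l}=1$ then yields
\[
\sum_{l=1}^n\bigl(\mathrm{Re}(B_l)^2+\mathrm{Im}(B_l)^2\bigr)=\sum_{j=1}^n P^j+(1-2^{-n})(P_1+P_2),
\]
which handles the $v$-projections completely and the sink projections up to a remainder.

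To absorb the leftover $2^{-n}(P_1+P_2)$, I would set $a_0:=2^{-n/2}(P_1-P_2)$. This element is self-adjoint, and the sink swap sends it to its negative, so $a_0\in A_{-1}$; orthogonality of $P_1$ and $P_2$ gives $a_0^2=2^{-n}(P_1+P_2)$, and its spectrum lies in $\{-2^{-n/2},0,2^{-n/2}\}\subset\stars{2}$. Combining $a_0$ with the $2n$ elements $\mathrm{Re}(B_l),\mathrm{Im}(B_l)$ produces $2n+1$ self-adjoint members of $A_{-1}$ whose squares sum to $\sum_j P^j+P_1+P_2=1$; the contractivity estimate $\|B_l\|^2=\|B_lB_l^*\|=2^{1-l}\le 1$ keeps every element inside $\stars{2}$. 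The desired inequality $\trivdim{C(S^{2n}_q)}{\bZ/2}\le 2n$ then follows from \Cref{pr.starkcounting}.

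The main obstacle I anticipate is diagnosing \emph{why} the odd-sphere method does not mechanically transplant to the even case, and then finding a single additional element of $A_{-1}$ supplying the correct multiple of $P_1+P_2$ without interfering with the $B_l$'s. The observation that the antipodally-invariant projection $P_1+P_2$ is the square of the antipodally-anti-invariant element $P_1-P_2$---an element that exists precisely because the antipodal action is strictly richer than the gauge action at the sinks---is what makes everything click.
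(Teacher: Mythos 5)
Your construction is exactly the paper's: your $B_l$ are (up to normalization) the paper's $t_j=\sum_{k=j}^nS_{jk}+S^j_1+S^j_2$, your $a_0$ is its $z=\tfrac{1}{\sqrt{2^n}}(P_1-P_2)$, and the telescoping computation with weights $2^{-l}$ is the same \Cref{le.rowcolsumZ2}-style calculation the paper invokes. The proposal is correct and simply spells out the details the paper leaves to the reader.
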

\begin{proof}
Define the elements $z:=\frac{1}{\sqrt{2^n}}(P_1-P_2)$ and $t_j:=\sum_{k=j}^nS_{jk}+\sum_{l=1}^2S^j_l$ for each $1 \leq j \leq n$, and consider the following rescaled real and imaginary parts of the $t_j$:
\begin{gather*}
y_{2j-1}=\frac{1}{\sqrt{2^{j+1}}}\left(\sum_{k=j}^n\left(S^*_{jk}+S_{jk}\right)+\sum_{l=1}^2(\left(S^j_l\right)^*+S^j_l)\right),\\ 
y_{2j}=\frac{1}{i\sqrt{2^{j+1}}}\left(\sum_{k=j}^n\left(S^*_{jk}-S_{jk}\right)+\sum_{l=1}^2(\left(S^j_l\right)^*-S^j_l)\right).
\end{gather*}
The elements $z$ and $y_k$, where $k$ ranges between $1$ and $2n$, are self-adjoint and in the $-1$ spectral subspace of the action. By an analogous calculation as in the proof of \Cref{le.rowcolsumZ2}, these elements satisfy $z^2 + \sum\limits_{k=1}^{2n} y_k^2 = 1$, and hence $\trivdim{C(S^{2n}_q)}{\bZ/2} \leq 2n$.
\end{proof}

It is possible that the inequalities given above are equalities, which would produce an alternative proof of \cite[Corollary 15]{qdeformed} using the local-triviality dimension. We conclude the paper with an explanation of why the \textit{weak} local-triviality dimension is insufficient for this purpose, as well as how this claim relates to earlier work on noncommutative Borsuk-Ulam theory.

In \cite[Question 3]{taghavi}, Taghavi posed the following question for noncommutative spheres. If $A$ is a noncommutative $n$-sphere, and $a_1, \ldots, a_n$ are self-adjoint elements in the $-1$ spectral subspace, must $a_1^2 + \ldots + a_n^2$ be non-invertible? Note that when $A = C(\bS^n)$, the desired conclusion is equivalent to the Borsuk-Ulam theorem. Applying \Cref{pr.starkcounting} shows that \cite[Question 3]{taghavi} has a positive answer if and only if $\wtrivdim{A}{\bZ/2} \geq n$ whenever $A$ is a noncommutative $n$-sphere. There exist multiple families of noncommutative spheres, and \cite[Theorem 2.10]{bentheta} shows that Taghavi's question has a negative answer for the $\theta$-deformed spheres. The following result shows the answer is also negative for the $q$-deformed spheres. That is, $\wtrivdim{C(S^k_q)}{\bZ/2}$ can be strictly less than $k$.

\begin{proposition}\label{pr.weakdimquantumbad}
The antipodal $\bZ/2$-action on $C(S^k_q)$ has $\wtrivdim{C(S^k_q)}{\bZ/2} = 1$.
\end{proposition}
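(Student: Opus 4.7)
The plan is to establish both $\wtrivdim{C(S^k_q)}{\bZ/2} \le 1$ and $\wtrivdim{C(S^k_q)}{\bZ/2} \ge 1$, exploiting the fact that $q < 1$ makes the sphere relations ``non-commutative enough'' to force invertibility.

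For the upper bound, since $z_1$ lies in the $-1$ spectral subspace of the antipodal action, both $a_0 := \tfrac{1}{2}(z_1 + z_1^*)$ and $a_1 := \tfrac{1}{2i}(z_1 - z_1^*)$ are self-adjoint elements of this subspace, i.e.\ normal elements with spectrum in $\stars{2}$. A direct expansion yields
\[
a_0^2 + a_1^2 \;=\; \tfrac{1}{2}\bigl(z_1 z_1^* + z_1^* z_1\bigr).
\]
Using the defining relation $z_1^* z_1 = z_1 z_1^* + (1-q^2)\sum_{j>1} z_j z_j^*$ together with $\sum_{j} z_j z_j^* = 1$ (both valid for $C(S^{2n-1}_q)$ and for its quotient $C(S^{2n}_q)$, where $z_{n+1} = z_{n+1}^*$ does not disturb these identities), one obtains
\[
a_0^2 + a_1^2 \;=\; \tfrac{1-q^2}{2}\cdot 1 \;+\; \tfrac{1+q^2}{2}\, z_1 z_1^* \;\ge\; \tfrac{1-q^2}{2} > 0,
\]
so this element is invertible. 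By \Cref{pr.starkcounting}, we conclude $\wtrivdim{C(S^k_q)}{\bZ/2} \le 1$.

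For the lower bound, I would construct a $\bZ/2$-equivariant unital $*$-homomorphism $\pi: C(S^k_q) \to C(\bS^1)$ sending $z_1 \mapsto z$ (the standard unitary generator) and $z_j \mapsto 0$ for all $j \ge 2$ (including $z_{n+1} \mapsto 0$ in the even case, which is compatible with $z_{n+1} = z_{n+1}^*$). Checking the relations is routine: the $q$-commutation identities reduce to $0 = 0$ whenever a vanishing generator appears, the twisted identity for $z_1$ collapses to $z^* z = z z^*$, and the sphere relation becomes $zz^* = 1$. Equivariance is automatic because both antipodal actions send the distinguished generator to its negative. Since the weak local-triviality dimension is monotone under equivariant unital $*$-homomorphisms (by functoriality of \Cref{pr.starkcounting}), we obtain $\wtrivdim{C(S^k_q)}{\bZ/2} \ge \wtrivdim{C(\bS^1)}{\bZ/2}$.

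Finally, I would note that $\wtrivdim{C(\bS^1)}{\bZ/2} = 1$: the upper bound is witnessed by the real and imaginary parts of the standard unitary, whose squares sum to $1$, while for the lower bound, $\wtrivdim{C(\bS^1)}{\bZ/2} = 0$ would by \Cref{re.w0} force an equivariant unital $*$-homomorphism $C(\bZ/2) \to C(\bS^1)$, that is, a pair of complementary projections swapped by the antipodal map, contradicting connectedness of $\bS^1$. The only potential obstacle in the argument is to be careful with the sphere relations in the even-sphere case, but since $C(S^{2n}_q)$ is a quotient of $C(S^{2n+1}_q)$, the same computation transfers directly.
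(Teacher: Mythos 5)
Your argument is correct, and the lower bound (quotient onto $C(\bS^1)$ by killing $z_j$ for $j\ge 2$, plus monotonicity of $\wtrivdim{-}{\bZ/2}$ under equivariant unital $*$-homomorphisms) is exactly the paper's. The upper bound, however, takes a genuinely different route. The paper stays inside the graph picture: it takes $Y_1$ to be the set of loops $e_{ii}$ in the Hong--Szyma\'nski graph of $C(S^{2n-1}_q)$, notes these have pairwise distinct ranges and hit every vertex as ranges, and invokes \Cref{pr.z2-fin} to get $\wtrivdim{C(S^{2n-1}_q)}{\bZ/2}\le 1$; the even case follows since $C(S^{2n}_q)$ is an equivariant quotient. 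You instead work directly with the Vaksman--Soibelman relations, showing $a_0^2+a_1^2=\tfrac12(z_1z_1^*+z_1^*z_1)=\tfrac{1-q^2}{2}+\tfrac{1+q^2}{2}z_1z_1^*\ge\tfrac{1-q^2}{2}>0$ for the real and imaginary parts of $z_1$. Both computations are sound (your $a_0,a_1$ are self-adjoint contractions in the $-1$ spectral subspace, so \Cref{pr.starkcounting} applies). Your version is more self-contained --- it bypasses the graph presentation and \Cref{pr.z2-fin} entirely --- and it makes visible exactly where $q<1$ enters: the lower bound $\tfrac{1-q^2}{2}$ degenerates as $q\to 1$, consistently with the classical Borsuk--Ulam theorem. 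The paper's version is $q$-uniform and fits the general graph-theoretic machinery that the rest of the paper is built around, but conveys less about the mechanism of failure of Taghavi's question for these spheres. One small remark: the paper asserts $\wtrivdim{C(\bS^1)}{\bZ/2}=1$ without proof, whereas you supply the (easy) argument via \Cref{re.w0} and connectedness; that is a harmless addition.
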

\begin{proof}
  Any $C(S^k_q)$ admits $C(\bS^1)$ as a $\bZ/2$-equivariant quotient via the map which annihilates $z_i$ for each $i \geq 2$. Since $\wtrivdim{C(\bS^1)}{\bZ/2} = 1$, it follows that $\wtrivdim{C(S^k_q)}{\bZ/2} \geq 1$. For the reverse inequality, it suffices to consider only the odd spheres, as the even spheres are quotients.

Consider $C(S^{2n-1}_q)$ with the graph presentation above, so that the antipodal action is the gauge $\bZ/2$-action. The loops at each vertex certainly have distinct ranges, and the single set $Y_1$ containing these loops satisfies the conditions of \Cref{pr.z2-fin}, since $r^{-1}(v_i) \cap Y_1 \not= \varnothing$ for each $v_i$. It follows that $\wtrivdim{C(S^{2n-1}_q)}{\bZ/2} \leq 2 \cdot 1 - 1 = 1$.
\end{proof}

Note that the $\theta$-deformed spheres are also poorly behaved with respect to \textit{plain} local-triviality dimension, in that it is possible for $\trivdim{C(\bS^k_\theta)}{\bZ/2}$ to equal $1$ even if $k$ is large. This occurs, for example, if the generators pairwise anticommute, as in \cite[Proposition 3.21]{alexbeninvariants}. Nevertheless, the $\theta$-deformed spheres also satisfy a Borsuk-Ulam theorem (see \cite[Corollary 3.2]{bentheta} and \cite[Theorem 1.8]{benanticommuting}), which can be shown using $K$-theory.

%%%%%%%%%%%%%%

\section*{Acknowledgments}
This work is part of the project Quantum Dynamics supported by
EU-grant RISE 691246 and Polish Government grant 317281.
A.C. was partially supported by NSF grants DMS-1801011 and DMS-2001128.
M.T. was partially supported by the project Diamentowy Grant No. DI2015 006945 financed by the
Polish Ministry of Science and Higher Education.  We are grateful to the referee for helpful comments.

%%%%%%%%%%%%%%%%%%%%%%%%%%%%%%%%%%%%%%%%%%%%%%%%%%%%%%%%%%%%%%%%%%%%%%%%%%%%%%%%%%%%%%
%%%%%%%%%%%%%%%%%%%%%%%%%%%%%%%%%%%%%%%%%%%%%%%%%%%%%%%%%%%%%%%%%%%%%%%%%%%%%%%%%%%%%%

\bibliographystyle{plain}
\bibliography{dimrefs_nomi}

\Addresses

\end{document}